\definecolor{vert}{RGB}{15,120,5}
\definecolor{gris}{RGB}{128,128,128}
\definecolor{bleu}{RGB}{0,50,150}
\definecolor{rouge}{RGB}{149,24,24}
\DeclareFontFamily{U}{BOONDOX-calo}{\skewchar\font=45 }
\DeclareFontShape{U}{BOONDOX-calo}{m}{n}{
  <-> s*[1.05] BOONDOX-r-calo}{}
\DeclareFontShape{U}{BOONDOX-calo}{b}{n}{
  <-> s*[1.05] BOONDOX-b-calo}{}
\DeclareMathAlphabet{\mathcalboondox}{U}{BOONDOX-calo}{m}{n}
\SetMathAlphabet{\mathcalboondox}{bold}{U}{BOONDOX-calo}{b}{n}
\DeclareMathAlphabet{\mathbcalboondox}{U}{BOONDOX-calo}{b}{n}
\newtheorem{theorem}{Theorem}
\numberwithin{theorem}{subsection}
\numberwithin{equation}{subsection}
\newtheorem{corollary}[theorem]{Corollary}
\newtheorem{lemma}[theorem]{Lemma}
\newtheorem{definition}[theorem]{Definition}
\newtheorem{proposition}[theorem]{Proposition}
\newtheorem{remark}[theorem]{Remark}
\newtheorem{example}[theorem]{Example}
\newtheorem{notations}[theorem]{Notations}
\newtheorem*{theorem*}{Theorem}
\newtheorem*{proposition*}{Proposition}
\newtheorem*{remark*}{Remark}
\newtheorem*{definition*}{Definition}
\newtheorem*{theoremf*}{Th\'eorème}
\newtheorem*{definitionf*}{Définition}
\newtheorem{emptypar}[theorem]{}
\newcommand{\Z}{\mathbb{Z}}
\newcommand{\Q}{\mathbb{Q}}
\newcommand{\mr}{\mathscr}
\newcommand{\mb}{\mathbb}
\newcommand{\mc}{\mathcal}
\newcommand{\un}{\mathbbm{1}}
\newcommand{\und}{\underline}
\newcommand{\rar}{\rightarrow}
\newcommand{\heart}{\ensuremath\heartsuit}
\newcommand{\id}{\operatorname{Id}}
\newcommand{\Hom}{\operatorname{Hom}}
\newcommand{\Map}{\operatorname{map}}
\newcommand{\et}{\operatorname{\acute{e}t}}
\newcommand{\Sm}{\operatorname{Sm}}
\newcommand{\Sh}{\operatorname{Sh}}
\newcommand{\DM}{\operatorname{\mathcal{DM}}}
\newcommand{\D}{\operatorname{D}}
\newcommand{\coh}{\operatorname{coh}}
\newcommand{\BM}{\operatorname{BM}}
\newcommand{\Hl}{\mathrm{H}}
\newcommand{\Spec}{\operatorname{Spec}}
\newcommand{\colim}{\operatorname{colim}}
\newcommand{\Rep}{\operatorname{Rep}}
\newcommand{\codim}{\operatorname{codim}}
\newcommand{\In}{\operatorname{Ind}}
\newcommand{\Loc}{\operatorname{Loc}}
\newcommand{\lis}{\operatorname{lisse}}
\newcommand{\Shv}{\operatorname{\mathcalboondox{Sh}}}
\newcommand{\pp}{\mathfrak{p}}
\newcommand{\Hlh}{{}^{hp}\Hl}
\newcommand{\ord}{\mathrm{ord}}
\title{Integral Artin motives II: Perverse motives and Artin Vanishing Theorem}
\author{Raphaël Ruimy}
\date{}
\newtheorem{introthm}{Theorem}
\newtheorem{introprop}[introthm]{Proposition}
\newtheorem{introdef}[introthm]{Definition}
\begin{document}
\begin{abstract}
In this text, we are mainly interested in the existence of the perverse motivic t-structures on the category of Artin étale motives with integral coefficients. We construct the \emph{perverse homotopy} t-structure which is the best possible approximation to a perverse t-structure on Artin motives with rational coefficients. The heart of this t-structure has properties similar to those of the category of perverse sheaves and contains the Ayoub-Zucker motive. With integral coefficients, we construct the perverse motivic t-structure on Artin motives when the base scheme is of dimension at most $2$ and show that it cannot exist in dimension $4$. This construction relies notably on an analogue for Artin motives of the Artin Vanishing Theorem.
\end{abstract}
\maketitle
\tableofcontents

\section*{Introduction}
\addtocontents{toc}{\protect\setcounter{tocdepth}{1}}

Thanks to \cite{thesevoe,ayo07,tcmm,ayo14,em,rob,khan,anabelian}, we now have at our disposal stable $\infty$-categories of mixed étale motives $\mc{DM}_{\et}$ endowed with Grothendieck's six functors formalism and various realization functors. 
However, to connect those categories to Grothendieck's conjectural theory of motives, the motivic t-structure is still missing.  Working by analogy with étale sheaves, there are two possible versions of the motivic t-structure: the \emph{perverse motivic t-structure} and the \emph{ordinary motivic t-structure}.

Indeed, if $\Lambda$ is an $n$-torsion ring and if $S$ is a scheme over which $n$ is invertible, the stable $\infty$-category $\mc{D}^b_c(S,\Lambda)$ of constructible complexes of étale sheaves with coefficients in $\Lambda$ is naturally endowed with its ordinary t-structure; the perverse t-structure then is defined on  by setting a complex $M$ to be 
\begin{itemize}
\item t-negative whenever for any point $x$ of $S$, letting $i_x\colon \{ x\} \rar S$ be the inclusion, the cohomology groups of the complex $i_x^*M$ vanish in degree higher than $\dim(\overline{\{ x\}})$. 
\item t-positive whenever for any point $x$ of $S$, the cohomology groups of the complex $i_x^!M$ vanish in degree lower than $\dim(\overline{\{ x\}})$. 
\end{itemize}

One of the central features of the perverse t-structure is that it satisfies Artin's vanishing Theorem:

\begin{theorem*}(Artin vanishing Theorem, \cite[XV.1.1.2]{travauxgabber}) 
    Let $f\colon X\rar Y$ be an affine morphism between quasi-excellent scheme. Then, the functor $$f_*\colon \mc{D}^b_c(X,\Lambda)\rar \mc{D}^b_c(Y,\Lambda)$$ is perverse right t-exact (meaning that it preserves negative objects of the perverse t-structure). By duality, the functor $f_!$ is perverse left t-exact.
\end{theorem*}
This result generalizes Artin's original vanishing Theorem from \cite[XIV.3.2]{sga4} which asserts that the étale cohomology group $\Hl^n_{\et}(X,\mc{F})$ vanishes when $X$ is a smooth affine variety over an algebraically closed field and $\mc{F}$ is a torsion étale sheaf and $n>\dim(X)$.

Furthermore, the conjectural existence of the perverse t-structure is critical in the theory of motives: its heart would be the abelian category of mixed motivic sheaves which should then satisfy Beilinson's conjectures on special values of $L$ functions (see \cite{Jannsen}).





In its full generality, this problem is beyond the scope of our current technology. On subcategories of étale motives, namely Artin motives, Artin-Tate motives and $1$-motives, the ordinary motivic t-structure is however now well understood thanks to \cite{orgo,ayo11,abv,bvk,plh,plh2,vaish,AM1,1mot}.


This leaves open the case of the perverse t-structure, even in the case of motives with rational coefficients.
In this paper, we construct the perverse motivic t-structure on Artin motives in dimension $2$ or less and we explain why it cannot exist in other cases. We show that there is however a variant: the \emph{perverse homotopy t-structure} which exists unconditionally with rational coefficients (but only in dimension $2$ or less integrally) and coincides with the perverse motivic t-structure whenever the latter is defined. 

This allows to formulate (and ultimately prove) Artin's Vanishing Theorem for Artin motives which is \Cref{Affine Lefschetz Q-cons}. Our approach in fact actually reverses the traditional philosophy: we first prove the existence of the t-structure and Artin's vanishing theorem rationally which is one of the main ingredients to prove (or disprove for higher dimensional schemes) the existence of the t-structure integrally.

The main player of this paper is the category of Artin étale motives:
\begin{definition*}Let $S$ be a scheme and let $R$ be a commutative ring. The category $\DM^A_{\et,c}(S,R)$ of \emph{constructible Artin étale motives} (or constructible $0$-motives) is defined as the thick subcategory\footnote{This means the smallest subcategory closed under finite limits, finite colimits and retracts.} of $\DM_{\et}(S,R)$ generated by the cohomological motives of finite $S$-schemes.
\end{definition*}

The goal is to investigate the existence of the perverse motivic t-structure as well as to prove properties of Artin vanishing type. The perverse motivic t-structure should be the one such that realization functors are t-exact.

\begin{introdef}\label{reduced l-adic real} Let $S$ be a scheme and $R$ be a localization of $\Z$. Let $t_0$ be a t-structure on $\DM_{\et,c}^A(S,R)$. For any prime $\ell$, consider the \emph{reduced $\ell$-adic realization functor}: 
$$\bar{\rho}_\ell\colon \mc{DM}_{\et,c}^A(S,R)\to \mc{DM}_{\et,c}^A(S[1/\ell],R)\overset{\rho_\ell}{\rar} \mc{D}^b_c(S[1/\ell],R\otimes_\Z \Z_\ell),$$ where $\rho_\ell$ is the usual $\ell$-adic realization from \cite{em}. We say that 
\begin{enumerate}
    \item the t-structure $t_0$ is the \emph{perverse motivic t-structure} if for any prime $\ell$, the functor $$\overline{\rho}_\ell\colon \DM^A_{\et,c}(S,R)\rar \mc{D}^b_c(S[1/\ell],R\otimes_\Z \Z_\ell)$$ is t-exact when the left hand side is endowed with $t_0$ and the right hand side is endowed with the perverse t-structure.
    \item the t-structure $t_0$ is the \emph{perverse homotopy t-structure} if for any constructible Artin étale motive $M$, the motive $M$ is $t_0$-negative if and only if for any prime $\ell$, the complex $\overline{\rho}_\ell(M)$ is perverse-negative.
\end{enumerate}
\end{introdef}

Note that if $t_0$ is the perverse t-structure it is also the perverse homotopy t-structure because the reduced $\ell$-adic realizations are jointly conservative on Artin motives (\cite[3.4.2]{AM1}). The perverse homotopy t-structure is inspired by Voevodsky's homotopy t-structure and by the $\delta$-homotopy t-structure of \cite{bondarko-deglise}. An $\ell$-adic analog was also considered in \cite{aps}. When it exists, the perverse homotopy t-structure is unique because t-negative objects determine a t-structure uniquely.

The perverse homotopy t-structure can actually be recovered from a t-structure on the category of (non-necessarily constructible) Artin motives $\DM_{\et}^A(S,R)$.
\begin{definition*}(\Cref{intro_perverse}) Let $S$ be a scheme endowed with a dimension function $\delta$ (see \Cref{dimension function}) and let $R$ be a commutative ring. The \emph{perverse homotopy t-structure} $t_{hp}$ on $\DM_{\et}^A(S,R)$ is the t-structure generated in the sense of \Cref{AM.t-structure generated} by the family of the Borel-Moore motives $M_S^\mathrm{BM}(X)[\delta(X)]$ with $X$ quasi-finite over $S$.
\end{definition*}
The above t-structure restricts to a t-structure on constructible Artin motives if and only if the perverse homotopy t-structure exists (and then both t-structure coincides) which justifies the terminology; this is \Cref{hp is hp}.

\subsection*{Summary of the paper}
Here are the main points of our investigation:
\begin{itemize}
	\item The perverse homotopy t-structure admits a pointwise description in the spirit of \cite{bbd}. Furthermore, over smooth Artin motives, the perverse homotopy t-structure exists and is simply a shift of the standard t-structure given by the identification from \cite{AM1} of smooth Artin motives with lisse étale sheaves. See \Cref{intro locality}
    \item $R=\Q$: the perverse homotopy t-structure in the sense of \Cref{reduced l-adic real} exists: see \Cref{Intro_MT_rat}. This can be derived from the same ideas as \cite{vaish} or \cite{aps}. 
	\item $R=\Q$: we give 3 analogs of the Artin vanishing theorem for Artin motives: see \Cref{Intro_lefschetz affine 2Q}, \Cref{affine lef direct image} and \Cref{Affine Lefschetz direct image}.
	\item $R=\Q$: we study the heart of the t-structure proving formal properties close to those of perverse sheaves and establishing links to known constructions on Artin motives from \cite{az,NV}: see \Cref{intro_simple objects EMX}.
    \item $R=\Z$: we prove the existence of the perverse homotopy t-structure (in the sense of \Cref{reduced l-adic real}) for schemes of dimension $2$ or less (we give counterexamples in dimension $4$). This is \Cref{intro_MT_Z}. The construction relies on the first version of Artin vanishing from the first part and on explicit computations of the Artin truncation functor $\omega^0$ (\Cref{def omega^0}). More precisely, we show that this functor has very pathological behaviors integrally: see \Cref{omega^0 sur un corps}. 
	The first version of the Artin vanishing theorem transfers readily to this setting.
    \item We prove that the $\ell$-adic realization is t-exact for schemes of dimension $2$ or less: this is done by using some gluing techniques from \cite{aps}. Thus in that case, the perverse motivic t-structure exists. The main result of \cite{aps} shows that this result is optimal. See \Cref{Intro_t-exactness}.
\end{itemize}
\subsection*{Pointwise description of the t-structure}
One of the key points for us will be the existence of the ordinary motivic t-structure on the subcategory $\DM^{smA}_{\et,c}(S,R)$ of \emph{constructible smooth Artin motives} from \cite{AM1}. The latter is the thick subcategory generated by cohomological motives of finite étale $S$-schemes. Local systems can be seen as perverse sheaves with a shift given by the dimension of the base. Here, constructible smooth Artin motives which are in the ordinary heart can also be seen as perverse Artin motives with the same shift. In fact, constructible smooth Artin motives play in the theory of Artin motives the same role as locally constant sheaves with perfect fibers in the theory of étale sheaves. By a result of \cite{AM1}, the analogy goes even deeper: constructible smooth Artin motives are identified to étale sheaves with perfect fibers over regular schemes in \cite{AM1}. 

Our other main technical tool is the Artin truncation functor. Recall from \cite{az} that for $S$ a noetherian scheme, the inclusion 
\[\iota \colon \DM^A_{\et}(S,R)\to \DM^\mathrm{coh}_{\et}(S,R)\]
of Artin motives into \emph{cohomological motives} has a right adjoint $\omega^0$: the \emph{Artin truncation functor}. This functor yields a trace of the six functors on Artin motives. We can use this to give a pointwise description of the t-structure in the spirit of \cite{bbd}:
\begin{introprop}\label{intro locality}(\Cref{AM.locality}) Let $S$ be a scheme with a dimension function $\delta$ and $R$ be a localization of $\Z$. If $x$ is a point of $S$, denote by $i_x\colon\{ x\}\rar S$ the inclusion. Let $M$ be an Artin motive over $S$ with coefficients in $R$. Then, \begin{enumerate}
\item $M\geqslant 0$ for the perverse homotopy t-structure if and only if it is bounded below with respect to the ordinary homotopy t-structure of \cite{AM1} and for any point $x$ of $S$, we have $$\omega^0i_x^!M\geqslant -\delta(x).$$
\item If $M$ is constructible, then $M\leqslant 0$ for the perverse homotopy t-structure if and only if for any point $x$ of $S$, we have $$i_x^*M\leqslant -\delta(x).$$
\item The perverse homotopy t-structure induces the ordinary t-structure (given by the identification with étale sheaves from \cite{AM1}) on smooth Artin motives up to a shift of $\delta(S)$.
\end{enumerate}
\end{introprop}
\subsection*{Rational coefficients} 
When the ring of coefficients is $\Q$, following the ideas of \cite{vaish,aps}, \Cref{intro locality} allows us to prove:
\begin{introprop}\label{Intro_MT_rat}(\Cref{main thm rationnel}) Let $S$ be an excellent scheme. The perverse homotopy t-structure exists on $\DM^A_{\et,c}(S,\Q)$.
\end{introprop}
Furthermore, if $p$ is a prime number, if the scheme $S$ is of finite type over $\mb{F}_p$, we defined in \cite{aps} 
the perverse homotopy t-structure on the category $\mc{D}^A(S,\Q_\ell)$ of Artin $\ell$-adic sheaves. We show that the $\ell$-adic realization $$\DM^A_{\et,c}(S,\Q)\rar \mc{D}^A(S,\Q_\ell)$$ is t-exact (\Cref{main thm rationnel}).

We can then prove our first analogue of Artin's vanishing Theorem. 
\begin{introthm}\label{Intro_lefschetz affine 2Q}(\Cref{lefschetz affine 2Q}) , let $f\colon X\rar S$ be a quasi-finite affine morphism. Assume that the scheme $X$ is nil-regular.


Assume furthermore that we are in one of the following cases 
\begin{enumerate}[label=(\alph*)]
    \item We have $\dim(S)\leqslant 2.$
    \item There is a prime number $p$ such that the scheme $S$ is of finite type over $\mb{F}_p$. 
\end{enumerate}
Then, the functor $$f_!\colon \DM^{smA}_{\et,c}(X,\Q)\rar \DM^A_{\et,c}(S,\Q)$$ is t-exact.
\end{introthm}

We give a second result which is "of Artin vanishing type". To state it, recall from \cite{plh} that the functor $\omega^0$ preserves constructible objects \cite{plh}. This yields for $f$ of finite type a right adjoint $\omega^0 f_*$ to the pullback functor $f^*$ on constructible Artin motives. When $f$ is smooth, Pepin Lehalleur showed that on Artin motives, the image of the unit object $\un_X$ through $\omega^0f_*$ can be described using the Stein factorization of $f$. We build on his result to prove a proper version of the Artin vanishing Theorem:

\begin{proposition*}(\Cref{affine lef direct image}) Let $S$ be a scheme allowing resolution of singularities by alterations and let $f\colon X\rar S$ be a proper morphism. Then, the functor $$\omega^0f_*\colon \DM^A_{\et,c}(X,\Q)\rar \DM^A_{\et,c}(S,\Q)$$ is right $t$-exact for the perverse homotopy t-structure. 
\end{proposition*}

Finally, denote the heart of the perverse homotopy t-structure as $\rm{M}^A_{\mathrm{perv}}(S,\Q)$. It shares similarities with the category of perverse sheaves: as in the case of perverse sheaves, we can define an analog of the intermediate extension functor denoted by $j_{!*}^A$. Furthermore, we have the following results:
\begin{introprop}\label{intro_simple objects EMX}(\Cref{simple objects,EMX}) Let $S$ be an excellent scheme.
\begin{enumerate} \item The abelian category $\rm{M}^A_{\mathrm{perv}}(S,\Q)$ is artinian and noetherian: every object is of finite length.
\item If $j\colon V\hookrightarrow S$ is the inclusion of a regular connected subscheme and if $L$ is a simple object of $\Loc_V(K)$, then the perverse Artin motive $j_{!*}^A(\rho_!L[\delta(V)])$ is simple. Every simple perverse Artin motive is obtained this way.
\item Let $d=\delta(S)$. Recall the motivic weightless complex $\mb{E}_S$ from \cite[3.21]{az}. Then, the motive $\mb{E}_S [d]$ is a simple perverse Artin motive over $S$.
\end{enumerate}
\end{introprop}
The last assertion relies on a third analog of the Artin vanishing theorem, namely that if $j$ is as above, the functor $$\omega^0j_*\colon \DM^{smA}_{\et,c}(V,\Q)\rar \DM^A_{\et,c}(S,\Q)$$ is perverse-homotopy t-exact (\Cref{Affine Lefschetz direct image}).

\subsection*{Integral coefficients}
When $R=\Z$, the situation is very different. First, we have a positive result for schemes of dimension $2$ or less. 

\begin{introthm}\label{intro_MT_Z}(\Cref{AM.main theorem} and \Cref{Affine Lefschetz}) Let $S$ be an excellent scheme of dimension $2$ or less endowed with a dimension function, let $R$ be a localization of $\Z$. Then, the perverse homotopy t-structure induces a t-structure on $\DM^A_{\et,c}(S,R)$.
	
Furthermore, if $f\colon T\rar S$ is a quasi-finite and affine morphism of schemes. Then, the functor $$f_!\colon \DM^A_{\et,c}(T,R)\rar \DM^A_{\et,c}(S,R)$$ is perverse homotopy t-exact.
\end{introthm}
However, unlike the case of rational Artin motives, we have a negative result for higher dimensional schemes. If $k$ is a field, the perverse homotopy t-structure of the stable $\infty$-category $\mc{DM}^A_{\et}(\mb{A}^4_k,\Z)$ does not induce a t-structure on the subcategory $\mc{DM}^A_{\et,c}(\mb{A}^4_k,\Z)$ (see \Cref{exemple dim 4}). The case of $3$-dimensional schemes remains open.

One of the main ingredients of \Cref{Intro_MT_rat}, is that when $R=\Q$, the functor $\omega^0$ preserves constructible objects. Unfortunately, when $R=\Z$, we show that this results fails completely: if $f\colon X\rar S$ is a morphism of finite type which is not quasi-finite, then, the Artin motive $\omega^0 f_! \un_X$ is not constructible (see \Cref{quand c'est qu'c'est constructible}). However, this functor is still tractable, even with integral coefficients when the base scheme is the spectrum of a field and we can generalize some of the computations of \cite{az}. To be more precise, when $f\colon X\to \Spec(k)$ is a proper morphism with $X$ regular and $k$ a field, it is possible to compute $\omega^0f_*\un_X$ explicitly purely in terms of étale cohomology. Roughly speaking, what happens is that as torsion motives are all Artin motives, a shadow of motives of higher dimension appears even though we stay in the realm of Artin motives. They often for instance appear as copies of $\Q/\Z$ which is non-constructible. This is \Cref{omega^0 sur un corps}. 
The key idea to prove the theorem is that in low dimension, truncation allows to ignore some of those contributions. This is reminiscent of some computations in \cite{bvk} such as \cite[5.1.1]{bvk}. 
\subsection*{Gluing and t-exactness of the realization }
In the last part of the paper, we show how to recover a perverse Artin motive from Artin representations and gluing data (\Cref{description d=2}). This allows for a proof of t-exactness of the realization functor:

\begin{introthm}\label{Intro_t-exactness}(\Cref{t-ex of l-adic real}) Let $S$ be an excellent scheme of dimension $2$ or less endowed with a dimension function, let $R$ be a localization of $\Z$, let $\ell$ be a prime number. Then, the reduced $\ell$-adic realization functor 
$$\overline{\rho}_\ell\colon \DM_{\et,c}^A(S,R)\rar \mc{D}^b_c(S[1/\ell],R\otimes_\Z \Z_\ell)$$ of \Cref{reduced l-adic real} is t-exact when the left hand side is endowed with the perverse homotopy t-structure and the right hand side is endowed with the perverse t-structure.
\end{introthm}
This proves that the perverse motivic t-structure exists for schemes of dimension at most $2$. The main result of \cite{aps} shows that this is optimal.

\subsection*{Acknowledgements}

This paper is part of my PhD thesis, done under the supervision of Frédéric Déglise. I would like to express my deepest gratitude to him for his constant support, for his patience and for his numerous suggestions and improvements. 

My sincere thanks also go to Joseph Ayoub, Jakob Scholbach and the anonymous referee for their very detailed reports that allowed me to greatly improve the quality of this paper.
I would also like to thank warmly Marcin Lara, Sophie Morel, Riccardo Pengo, Simon Pepin Lehalleur and Jörg Wildeshaus for their help, their kind remarks and their questions which have also enabled me to improve this work.

Finally, I would like to thank Olivier Benoist, Robin Carlier, Mattia Cavicchi, Adrien Dubouloz, Wiesława Nizioł, Fabrice Orgogozo, Timo Richarz, Wolfgang Soergel, Markus Spitzweck, Olivier Taïbi, Swann Tubach and Olivier Wittenberg for their interest in my work and for some helpful questions and advices. 

\section*{Notations and conventions}
\subsection*{Higher category theory}
In this text, we freely use the language of $\infty$-categories from \cite{htt,ha}. We will say \emph{category} instead of $\infty$-category.

If $\mc{C}$ is a stable category, we will denote by $\Map_\mc{C}$ the mapping spectrum for $\mc{C}$. The homomorphism functor $\Hom_\mc{C}$ identifies with $\Hl^0\Map_\mc{C}$. We say that a spectrum is $n$-connected if its $\Hl^k$ vanish for $k<n$.

\subsection*{t-structures}
We adopt the cohomological convention for t-structures (\textit{i.e} the convention of \cite[1.3.1]{bbd} which is the opposite of the convention of \cite[1.2.1.1]{ha}): a t-structure on a stable category $\mc{D}$ is a pair $(\mc{D}^{\leqslant 0},\mc{D}^{\geqslant 0})$ of full subcategories of $\mc{D}$ which are closed under isomorphisms and have the following properties: 
\begin{itemize}
	\item For any $M$ in $\mc{D}^{\leqslant 0}$ and any $N$ in $\mc{D}^{\geqslant 0}$, the abelian group $\Hom_\mc{D}(M,N[-1])$ vanishes.
	\item We have inclusions $\mc{D}^{\leqslant 0} \subseteq \mc{D}^{\leqslant 0}[-1]$ and $\mc{D}^{\geqslant 0}[-1] \subseteq \mc{D}^{\geqslant 0}$.
	\item For any $M$ in $\mc{D}$, there is an exact triangle \[M'\rar M \rar M''\] where $M'$ is an object of $\mc{D}^{\leqslant 0}$ and $M''$ is an object of $\mc{D}^{\geqslant 0}[-1]$.
\end{itemize}
A stable category endowed with a t-structure is called a t-category. If $\mc{D}$ is a t-category, we denote by $\mc{D}^\heart=\mc{D}^{\geqslant 0} \cap \mc{D}^{\leqslant 0}$ the heart of the t-structure (which is an abelian category). 
If $t_{?}$ is a t-structure, we denote by ${}^{?}\tau_{\geqslant 0}, {}^{?} \mathrm{H}^0,{}^? \mc{D}^\heart,\ldots$ the various notions attached to $t_?$. 

Finally, if $\mc{D}$ is a t-category and $\mc{D}'$ is a full stable subcategory of $\mc{D}$ such that  $(\mc{D}^{\leqslant 0} \cap \mc{D}', \mc{D}^{\geqslant 0} \cap \mc{D}')$ defines a t-structure on $\mc{D}'$, we say that the t-structure of $\mc{D}$ \emph{induces a t-structure} on $\mc{D}'$ and call the latter the \emph{induced t-structure}.
\subsection*{Schemes}
All schemes are assumed to be \textbf{noetherian} and of \textbf{finite dimension}; furthermore all smooth (and étale) morphisms and all quasi-finite morphisms are implicitly assumed to be separated and of finite type. The following notations will be used very often:
\begin{itemize}
	\item We let $\Sm$ be the class of smooth morphisms of schemes. For a scheme $S$, we let $S_{\et}$ (resp. $\Sm_S$) be the category of étale (resp. smooth) $S$-schemes.
	\item If $x$ is a point of a scheme $S$, we denote by $k(x)$ the residue field of $S$ at the point $x$.
	\item If $k$ is a field, we denote by $G_k$ its absolute Galois group.
	\item If $S$ is a scheme and $\xi$ is a geometric point of $S$, we denote by $\pi_1^{\et}(S,\xi)$ the étale fundamental group of $S$ with base point $\xi$ defined in \cite{sga1}.
\end{itemize}

Finally, if $S$ is a scheme. A \emph{stratification} of $S$ is a partition of $S$ into non-empty equidimensional locally closed subschemes called \emph{strata} such that the topological closure of any stratum is a union of strata.

\subsection*{Sheaves}
If $\mc{C}$ is a site and $R$ is a commutative ring, we denote by $\Shv(\mc{C},R)$ the category of hypersheaves $\mc{C}$ with value in the derived category of $R$-modules and by $\Sh(\mc{C},R)$ its heart which is the 1-category of sheaves of $R$-modules on $\mc{C}$.








\section{The perverse homotopy t-structure}
\addtocontents{toc}{\protect\setcounter{tocdepth}{2}}

\subsection{Artin motives}
We will extensively use the results and notations of \cite{em}. 
Their definitions and results are formulated in the setup of triangulated categories but can readily be adapted to our framework following the ideas of \cite{anabelian,khan,rob,agv}.


Let $S$ be a scheme and $R$ be a commutative ring. There are several models for the category of étale motives. We will use Ayoub's model developed in \cite{ayo07}: the stable category $\DM_{\et}(S,R)$ of \emph{étale motives} over $S$ with coefficients in $R$ is the $\mb{P}^1$-stabilization of the category of $\mb{A}^1$-local objects of $\Shv(\Sm_S,R)$. We denote its unit by $\un_S$.



We will often use references in \cite{em} which are formulated there in terms of h-motives. 
This is justified because the category of h-motives defined in \cite[5.1.1]{em} is equivalent over noetherian schemes of finite dimension to the category of étale motives (\cite[5.5.7]{em} applies in this generality, replacing \cite[4.1]{ayo14} by \cite[3.2]{bachmanrigidity} in the proof). 
\'Etale motives are endowed with the six functors by \cite{em}. In particular, we have localization exact triangles which will be central in this paper. More precisely, letting $i\colon Z\rar X$ be a closed immersion and $j\colon U\rar X$ be the complementary open immersion, we call \emph{localization triangles} the exact triangles of functors: 
\begin{equation}\label{AM.localization}j_!j^*\rar Id \rar i_*i^*.
\end{equation}
\begin{equation}\label{AM.colocalization}i_!i^!\rar Id \rar j_*j^*.
\end{equation}

To define Artin motives, we first recall the following definitions:
\begin{definition}\label{thickloc} Let $\mc{C}$ be a stable category 
\begin{enumerate}\item A \emph{thick} subcategory of $\mc{C}$ is a full subcategory $\mc{D}$ of $\mc{C}$ which is closed under finite limits, finite colimits and retracts.
\item A \emph{localizing} subcategory of $\mc{C}$ is a full subcategory $\mc{D}$ of $\mc{C}$ which is closed under finite limits and arbitrary colimits.
\item Let $\mc{E}$ be a set of objects. We call \emph{thick} (resp. \emph{localizing}) \emph{subcategory generated by} $\mc{E}$ the smallest thick (resp. localizing) subcategory of $\mc{C}$ whose set of objects contains $\mc{E}$.
 \end{enumerate}
\end{definition}

Let $S$ be a scheme and $R$ be a commutative ring. Recall for $f\colon X \to S$ a morphism of finite type the relative motives associated to $X$ in $\DM_{\et}(S,R)$:
\begin{itemize}
\item the cohomological motive $h_S(X)=f_*\un_X$.
\item the homological motive $M_S(X)=f_!f^!\un_S$.
\item the Borel-Moore motive $M_S^\mathrm{BM}(X)=f_!\un_X$.
\end{itemize}

We define Artin motives as follows:
\begin{definition}\label{def AM} Let $S$ be a scheme and $R$ be a commutative ring. We define
\begin{enumerate}
\item The category $\DM_{\et,(c)}^{A}(S,R)$ of \emph{(constructible) Artin étale motives} to be the localizing (resp. thick) subcategory of $\DM(S,R)$ generated by the $h_S(X)$ for $X$ finite over $S$.
\item The category $\DM_{\et,(c)}^{smA}(S,R)$ of \emph{(constructible) smooth Artin étale motives} over $S$ to be the localizing (resp. thick) subcategory of $\DM_{\et}(S,R)$ generated by the $h_S(X)$ for $X$ finite étale over $S$.
\end{enumerate}
\end{definition}

The functors $\otimes$ and $f^*$ (where $f$ is any morphism) restrict to the $\DM_{\et,(c)}^{(sm)A}(-,R)$. In the non-smooth case, we have an additional exceptional functoriality:

\begin{proposition}\label{f_!}(\cite[1.17]{plh}) Let $R$ be a commutative ring. The categories $\mc{DM}^A_{\et,(c)}(-,R)$ are closed under $f_!$, where $f$ is a quasi-finite morphism.

Hence, $\DM_{\et,(c)}(-,R)$ satisfies the localization property \eqref{AM.localization}.
\end{proposition}
Beware that if $j\colon U\rar S$ is an open immersion, the motive $j_*\un_U$ need not be Artin and thus the fibered category $\DM_{\et}(-,R)$ does not satisfy the localization property \eqref{AM.colocalization}.

For any constructible Artin motive, there is a stratification along which it becomes smooth (\cite[3.5.1]{AM1}). Hence smooth Artin motives can be seen as building blocks of Artin motives. 
They can furthermore be identified to the categories of étale sheaves.

\begin{definition}
    Let $S$ be a scheme and let $R$ be a commutative ring.
\begin{enumerate}
	\item The \emph{category of lisse étale sheaves} $\Shv_{\mathrm{lisse}}(S,R)$ is the subcategory of dualizable objects of $\Shv(S_{\et},R)$.
    \item The \emph{category of Ind-lisse étale sheaves} $\Shv_{\In\mathrm{lisse}}(S,R)$ is the localizing subcategory of $\Shv(S_{\et},R)$ generated by $\Shv_{\mathrm{lisse}}(S,R)$.
\end{enumerate}
\end{definition}

\begin{emptypar}\label{small et site}
To formulate the link between étale sheaves and Artin motives, recall that the inclusion of sites $S_{\et}\to \Sm_S$ induces a functor 
\[\rho_!\colon \Shv(S_{\et},R)\to \DM_{et}(S,R)\]
by \cite[4.4.2]{em}. Furthermore, the functor $\rho_!$ has its essential image contained in $\DM^A_{\et}(S,R)$ (see \cite[1.5.5]{AM1}).
\end{emptypar}

\begin{theorem}\label{smooth Artin motives}(\cite[3.1.6, 3.3.1]{AM1}) Let $R$ be a localization of $\Z$ and let $S$ be a regular scheme. Assume that the residue characteristic exponents of $S$ are invertible in $R$. Then, the functor $\rho_!$ induces monoidal equivalences
$$\Shv_{\In\lis}(S,R)\longrightarrow \DM^{smA}_{\et}(S,R)$$
$$\Shv_{\lis}(S,R)\longrightarrow \DM^{smA}_{\et,c}(S,R).$$

Furthermore, the ordinary t-structure on the stable category $\Shv(S_{\et},R)$ induces t-structures on the subcategories $\Shv_{\lis}(S,R)$ and $\Shv_{\In\lis}(S,R)$.
The heart those t-structures are the categories $\Loc_S(R)$ (resp. $\In \Loc_S(R)$) of locally constant sheaves of $R$-modules with finitely presented fibers (resp. filtered colimits of such sheaves). 
This yields t-structures on smooth Artin motives and their constructible counterpart.
\end{theorem}






The most important technical tool of the paper is the Artin truncation functor $\omega^0$. To define it, first recall the category of cohomological motives:

\begin{definition}
Let $S$ be a scheme and let $R$ be a commutative ring. The category $\DM_{\et,(c)}^{\coh}(S,R)$ of \emph{(constructible) cohomological étale motives} over $S$ is the localizing (resp. thick) subcategory generated by the motives of the form $h_S(X)$ for $X$ proper over $S$. 
\end{definition}

\begin{definition}\label{def omega^0}
The adjoint functor theorem of \cite[5.5.2.9]{htt} ensures that the inclusion of Artin motives into cohomological motives has a right adjoint:
$$\omega^0\colon\DM^{\coh}_{\et}(S,R)\rar \DM^A_{\et}(S,R).$$ which is called the \emph{Artin truncation functor}.
\end{definition}

This functor was first introduced in \cite[2.2]{az} and was further studied in \cite{plh} and \cite{vaish2}. We will use the compatibilities of $\omega^0$ with the six functors given in \cite[3.3]{plh}; in \textit{loc. cit.} they are stated with rational coefficients but the proofs work verbatim with any commutative ring of coefficients. As a consequence, we for instance get an analog of the localization triangle \eqref{AM.colocalization} for Artin motives:

\begin{proposition} Let $R$ be a commutative ring. Let $i\colon Z\rar S$ be a closed immersion, let $j\colon U\rar S$ be the complementary open immersion. Then, we have an exact triangle
$$i_!\omega^0i^!\rar \mathrm{id}\rar \omega^0j_* j^*$$
in $\DM_{\et}^A(S,R)$.
\end{proposition}
\begin{proof} Apply the functor $\omega^0$ to the localization triangle \eqref{AM.colocalization} and use the fact that the functor \[\eta_i\colon i_!\omega^0\to \omega^0 i_!\] of \cite[3.3]{plh} is an equivalence to get the result.
\end{proof}

We will see later that this exact triangle need not induce an exact triangle in $\DM_{\et,c}^A(S,R)$ in general. We can rephrase the above result in terms of gluing in the sense of \cite{bbd}.

\begin{corollary}\label{glueing_0}
    Keep the same notations. Then, the category $\mc{DM}^A_{\et}(S,R)$ is a gluing of the pair $(\mc{DM}^A_{\et}(U,R),\mc{DM}^A_{\et}(Z,R))$ along the fully faithful functors $i_*$ and $\omega^0j_*$ in the sense of \cite[A.8.1]{ha}, \textit{i.e.} the functors $i_*$ and $\omega^0j_*$ have left adjoint functors $i^*$ and $j^*$ such that
\begin{enumerate}
    \item We have $j^*i_*=0$.
    \item The family $(i^*,j^*)$ is conservative.
\end{enumerate}
In particular, by \cite[A.8.5, A.8.13]{ha}, the sequence $$\mc{DM}^A_{\et}(F,R)\overset{i_*}{\rar}\mc{DM}^A_{\et}(S,R)\overset{j^*}{\rar}\mc{DM}^A_{\et}(U,R)$$ satisfies the axioms of the gluing formalism of \cite[1.4.3]{bbd}.
\end{corollary}

In \cite{az}, Ayoub and Zucker proved that if $S$ is a quasi-projective scheme over a field of characteristic $0$ and if $R=\Q$, then, the functor $\omega^0_S$ preserves constructible objects (see \cite[2.15 (vii)]{az}). In \cite[3.7]{plh}, Pepin Lehalleur extended the result to schemes allowing resolutions of singularities by alterations. In the proof, Pepin Lehalleur uses this hypothesis only to ensure that the localization triangle \eqref{AM.colocalization} belongs to $\DM^{\coh}_{\et,c}(-,R)$. Using \Cref{main_thm_appendix}, 
we see that this assumption is in fact unnecessary and we deduce the following statement. 

\begin{proposition}\label{omega^0} Let $S$ be a quasi-excellent scheme and $R=\Q$. Then, the functor $\omega^0$ preserves constructible objects over $S$.
\end{proposition}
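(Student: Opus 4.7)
The strategy is to adapt verbatim the proof of \cite[3.7]{plh}, replacing only the invocation of resolution of singularities by alterations with the constructibility result provided in the appendix. By \Cref{AM.generators} and the fact that $\DM^{\coh}_{\et,c}(S,R)$ is the thick subcategory of $\DM_{\et}(S,R)$ generated by the motives $h_S(X)=f_*\un_X$ for proper $f\colon X\rar S$, together with the exactness of $\omega^0$, the statement reduces to showing that $\omega^0 f_*\un_X$ is constructible for each proper $f\colon X\rar S$. I would then proceed by noetherian induction on $\dim(S)$, the case $\dim(S)=0$ being immediate.

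Fix such an $f$ and set $M=f_*\un_X\in\DM^{\coh}_{\et,c}(S,R)$. Applying de Jong's alteration theorem and exploiting that alterations split in $\Q$-linear categories, one passes to a dense open immersion $j\colon U\hookrightarrow S$ with closed complement $i\colon Z\hookrightarrow S$ over which $f_U\colon X_U\rar U$ is smooth and proper. By \Cref{omega^0 prt}(5)(b), $\omega^0(f_U)_*\un_{X_U}$ identifies with the Artin motive $h_U(\pi_0(X_U/U))$; extending the finite étale $U$-scheme $\pi_0(X_U/U)$ to a finite $S$-scheme $p\colon Y\rar S$ by integral closure, and using that $\beta_p$ is invertible for the finite morphism $p$ (\Cref{omega^0 prt}(3)(b)(ii)), the object $\omega^0 j_*j^*M$ rewrites as $p_*\omega^0 (j_Y)_*\un_{Y_U}$, where $j_Y\colon Y_U\hookrightarrow Y$ denotes the structural open immersion. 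Applying the $\omega^0$-colocalization triangle stated in the corollary above on $Y$, together with the induction hypothesis applied to the closed complement $Y\setminus Y_U$ (of dimension strictly less than $\dim(S)$), yields that $\omega^0 j_*j^*M$ is constructible.

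Applying $\omega^0$ to the localization triangle \eqref{AM.colocalization} associated with $M$ furnishes
\[\omega^0 i_!i^!M \rar \omega^0 M \rar \omega^0 j_*j^*M.\]
By \Cref{omega^0 prt}(4)(b)(ii), the leftmost term identifies with $i_!\omega^0 i^!M$; by the induction hypothesis applied on $Z$, $\omega^0 i^!M$ is constructible provided $i^!M$ lies in $\DM^{\coh}_{\et,c}(Z,R)$. Combined with the constructibility of $\omega^0 j_*j^*M$ established above, the triangle yields the constructibility of $\omega^0 M$.

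The main obstacle, and the only genuine point of departure from \cite[3.7]{plh}, is the verification that $i^!M$ remains in $\DM^{\coh}_{\et,c}(Z,R)$ whenever $M$ lies in $\DM^{\coh}_{\et,c}(S,R)$, without assuming resolution of singularities by alterations for $S$. This is precisely the content of the appendix, which achieves the desired preservation via a direct analysis of the generators of $\DM^{\coh}_{\et,c}$, thereby dispensing with Pepin Lehalleur's hypothesis on the base scheme.
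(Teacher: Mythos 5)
Your proof is correct and takes essentially the same route as the paper: both reduce the statement to Pépin Lehalleur's argument from \cite[3.7]{plh}, with the appendix supplying the six-functor stability of $\DM^{\coh}_{\et,c}$ over quasi-excellent schemes exactly where the original proof invoked resolution of singularities by alterations. The paper's own proof is a one-paragraph remark pointing to \cite{plh} and the appendix; you reconstruct the underlying dévissage (generic smoothing, $\omega^0$-colocalization, noetherian induction) explicitly, which is a matter of detail rather than of method.
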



Using the Artin truncation functor $\omega^0$, there is a trace of the six functors formalism on constructible Artin motives with rational coefficients ; we will see that this is not true with integral coefficients. Namely, if $k$ is a field, and $i$ is the inclusion of a point in $\mb{P}^1_k$, the motive $\omega^0i^!\un_{\mb{P}^1_k}$ is not constructible (see \Cref{omega^0 corps} below).

\begin{proposition}\label{AM.six foncteurs constructibles} The six functors on $\mc{DM}_{\et}(-,\Q)$ induce 
\begin{enumerate}\item a functor $f^*$ for any morphism $f$,
\item a functor $\omega^0 f_*$ for any separated morphism of finite type $f$ whose target is quasi-excellent,
\item a functor $f_!$ for any quasi-finite morphism $f$,
\item a functor $\omega^0 f^!$ for any quasi-finite morphism $f$ with a quasi-excellent source,
\item functors $\otimes$ and $\omega^0\underline{\Hom}$ (where $\underline{Hom}$ is the internal Hom of étale motives)
\end{enumerate}
on $\DM^A_{\et,c}(-,\Q)$. Moreover, we have the localization property \eqref{AM.localization} on $\DM_{\et,c}^A(-,\Q)$: if $i\colon Z\rar S$ is a closed immersion and $j\colon U\rar S$ is the complementary open immersion, we have an exact triangle
$$i_!\omega^0i^!\rar \mathrm{id}\rar \omega^0j_* j^*$$
in $\DM_{\et,c}^A(S,\Q)$. 
\end{proposition}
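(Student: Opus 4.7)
The plan is to verify the six claimed closure properties in two groups, then read off the two triangles from what the six functors do in $\DM_{\et}$.

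First, closure under $f^*$, $f_!$ (for $f$ quasi-finite), and $\otimes$ is essentially already in the paper. These three operations preserve Artin motives by \Cref{f_!}, and on the explicit generators they act in an obvious way on the generating family: $f^* h_S(X) = h_T(X\times_S T)$, $f_! h_T(Y) = h_S(Y)$ (using that the composition of quasi-finite morphisms is quasi-finite), and $h_S(X)\otimes h_S(Y) = h_S(X\times_S Y)$; so the thick Artin subcategory is stable.

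Second, closure under $\omega^0 f_*$, $\omega^0 f^!$, and $\omega^0 \underline{\Hom}$ is where the real content lives. The key observation is that finite schemes are proper, so every Artin motive is cohomological: $\DM^A_{\et,c}(T,R)\subset \DM^{\coh}_{\et,c}(T,R)\subset \DM_{\et,c}(T,R)$. For $f\colon T\to S$ separated of finite type with $S$ quasi-excellent and $M\in\DM^A_{\et,c}(T,R)$, the motive $f_*M$ is a constructible étale motive, by stability of constructibility under $f_*$ for rational coefficients over quasi-excellent bases. Then \Cref{omega^0} (applied on the quasi-excellent scheme $S$) yields that $\omega^0 f_* M\in \DM^A_{\et,c}(S,R)$. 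The arguments for $\omega^0 f^!$ (with $T$ quasi-excellent and $f$ quasi-finite) and for $\omega^0\underline{\Hom}$ are word-for-word the same, once one invokes the preservation of constructibility by $f^!$ and by internal Hom on constructible rational étale motives.

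The localization property for $\DM_{\et,c}^A(-,R)$ is then immediate: the localization triangle $j_!j^* \to \id \to i_*i^*$ from $\DM_{\et}$ lands in $\DM_{\et,c}^A$ because $i_* = i_!$ for a closed immersion, so all four functors involved preserve constructible Artin motives by the two previous points. For the colocalization triangle, the plan is to apply the functor $\omega^0$ to the colocalization triangle \eqref{AM.colocalization} in $\DM_{\et}(S,R)$; the identification $\omega^0 i_! i^! \simeq i_!\omega^0 i^!$ provided by the isomorphism $\eta_i$ of \Cref{omega^0 prt}(4) produces the triangle $i_!\omega^0 i^!\to \id \to \omega^0 j_* j^*$ of the preceding corollary, and its terms are constructible by the parts of the proposition just established.

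The only substantive obstacle is \Cref{omega^0} itself, which is taken as an input here; everything above is an organized combination of the six-functor formalism for rational étale motives with the compatibilities of $\omega^0$ recorded in \Cref{omega^0 prt}.
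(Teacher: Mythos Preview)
Your proof follows the same approach as the paper's: left adjoints are handled by \Cref{f_!}, and right adjoints by combining the six-functor formalism on constructible cohomological motives with \Cref{omega^0}. One small imprecision: when you write that $f_*M$ is ``a constructible étale motive,'' what you actually need is that $f_*M\in\DM^{\coh}_{\et,c}(S,R)$, since $\omega^0$ is only defined on $\DM^{\coh}_{\et}$ and \Cref{omega^0} takes constructible \emph{cohomological} motives to constructible Artin motives. This is exactly what the paper invokes (``$\DM^{\coh}_{\et,c}(-,R)$ is endowed with the six functors formalism,'' proved in the Appendix), and your sentence ``stability of constructibility under $f_*$'' should be read in that sense; otherwise the argument is the paper's.
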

\begin{proof}We already proved the proposition in the case of left adjoint functors in \Cref{f_!}. The case of right adjoint functors follows from the fact that $\DM^{\coh}_{\et,c}(-,R)$ is endowed with the six functors formalism and from \Cref{omega^0}.
\end{proof}

\subsection{Definition and basic facts}
First recall the definition of dimension functions (see for instance \cite[1.1.1]{bondarko-deglise}).
\begin{definition}\label{dimension function}
 Let $S$ be a scheme. A \emph{dimension function} on $S$ is a function $$\delta\colon S\rar \Z$$ such that for any immediate specialization $y$ of a point $x$, \textit{i.e} a specialization such that $$\codim_{\overline{\{ x\}}} (y)=1,$$ we have $\delta(x)=\delta(y)+1$. 
 
A pair $(S,\delta)$ with $S$ a scheme and $\delta$ a dimension function is called a \emph{d-scheme}.
\end{definition}

\begin{emptypar}Two dimension functions on a scheme differ by a Zariski-locally constant function. 
Moreover, if a scheme $S$ is universally catenary and integral, the formula $$\delta(x)=\dim(S)-\codim_S(x)$$ defines a dimension function on $S$. 
In addition, since by convention all the schemes considered here are finite-dimensional and noetherian, our dimension functions will always be bounded.
Finally, any $S$-scheme of finite type $f\colon X\rar S$ inherits a canonical dimension function by setting 
$$\delta(x)=\delta(f(x))+\mathrm{tr.deg}\hspace{1mm} k(x)/k(f(x)).$$
By convention, when we consider a d-scheme and a morphism of finite type to the underlying scheme, we will always endow the source with the induced dimension function.
\end{emptypar}

The t-structure will be defined from a set of generators in the following sense: 
\begin{proposition}(\cite[1.4.4.11]{ha}) \label{AM.t-structure generated} Let $\mc{C}$ be a presentable stable category. Given a small family $\mc{E}$ of objects, the smallest subcategory $\mc{E}_-$ closed under small colimits and extensions is the set of non-positive objects of a t-structure.

In this setting, we will call this t-structure the \emph{t-structure generated by} $\mc{E}$. 
\end{proposition}
We can now define the perverse homotopy t-structure.

\begin{definition}\label{intro_perverse} Let $(S,\delta)$ be a d-scheme and let $R$ be a commutative ring. The \emph{perverse homotopy t-structure} $t_{hp}$ on $\DM_{\et}^A(S,R)$ is the t-structure generated in the sense of \Cref{AM.t-structure generated} by the family of the motives $M_S^\mathrm{BM}(X)[\delta(X)]$ with $X$ quasi-finite over $S$.
\end{definition}

Using the same method as \cite[1.28]{plh}, this t-structure is also generated by family of the motives $h_S(X)[\delta(X)]$ with $X$ finite over $S$. Thus, an object $M$ of $\DM_{\et}^A(S,R)$ belongs to $\DM_{\et}^A(S,R)^{hp \geqslant n}$ if and only if for any finite $S$-scheme $X$ the complex $\Map_{\DM_{\et}(S,R)}(h_S(X),M)$ is $(n-\delta(X)-1)$-connected. Let us now derive the formal properties of the t-structure that will be constantly used in the rest of the text. First note that over the spectrum of a field, the t-structure is just given by a shift of the ordinary homotopy t-structure of \cite{AM1}:
\begin{proposition}\label{fields} Let $k$ be a field and let $R$ be a localization of $\Z$. Then, the ordinary homotopy t-structure of \cite{AM1} and the perverse homotopy t-structure on $\DM^A_{\et}(k,R)$ coincide up to a shift of $\delta(k)$. In particular, when $R$ is regular, it induces a t-structure on constructible Artin motives which coincides up to the same shift with the ordinary t-structure on $\Shv_\mathrm{lisse}(k_{\et},R)$ through the equivalence $\rho_!$.
\end{proposition}
\begin{proof} The ordinary homotopy t-structure and $t_{hp}$ have the same set of generators up to a shift of $\delta(k)$. 
\end{proof}

\begin{proposition}\label{AM.t-adj} Let $R$ be a commutative ring. Let $f\colon X \to S$ be a quasi-finite morphism of schemes and let $g\colon Y \to S$ be a finite-type morphism of relative dimension $d$. Assume given a dimension function on $S$ and endow $Y$ and $X$ with the induced dimension functions.
\begin{enumerate} 
\item The adjunction $(f_!,\omega^0f^!)$ is a $t_{hp}$-adjunction.
\item If $M$ is a $t_{hp}$-non-positive Artin motive, then, the functor $-\otimes_S M$ is right $t_{hp}$-exact.
\item If $\dim(g)\geqslant d$, then, the adjunction $(g^*[d],\omega^0g_*[d])$ is a $t_{hp}$-adjunction.
\end{enumerate}
\end{proposition}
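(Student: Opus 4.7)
The strategy is to verify all three t-exactness statements by testing on the generators of $\DM^A_{\et}(S,R)^{hp\leqslant 0}$, namely the motives $f_!\un_X[\delta(X)]$ with $f\colon X\to S$ quasi-finite, and reducing via the compatibility of the dimension function with quasi-finite morphisms and via base change.

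For (1), given a quasi-finite $f\colon T\to S$ and a generator $g_!\un_Y[\delta(Y)]$ of $\DM^A_{\et}(T,R)^{hp\leqslant 0}$ with $g\colon Y\to T$ quasi-finite, the composite $f_!g_!\un_Y[\delta(Y)]=(f\circ g)_!\un_Y[\delta(Y)]$ is a generator on $S$: indeed $f\circ g$ is quasi-finite, and since $f$ is quasi-finite the dimension functions on $Y$ induced from $T$ and from $S$ coincide, so the shift $\delta(Y)$ is unambiguous. As $f_!$ commutes with colimits and extensions, it sends all of $\DM^A_{\et}(T,R)^{hp\leqslant 0}$ into $\DM^A_{\et}(S,R)^{hp\leqslant 0}$, so $f_!$ is right $t_{hp}$-exact and its right adjoint $\omega^0f^!$ is left $t_{hp}$-exact.

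For (2), the class of $M\in\DM^A_{\et}(S,R)$ such that $-\otimes M$ preserves $\DM^A_{\et}(S,R)^{hp\leqslant 0}$ is closed under colimits and extensions (because tensor product is colimit-preserving and exact), so one may assume $M=f_!\un_X[\delta(X)]$ is a generator. The projection formula gives $N\otimes M=f_!(f^*N)[\delta(X)]$, and iterating the same reduction for $N$ to a generator $g_!\un_Y[\delta(Y)]$ and applying base change to the cartesian square with $Z=X\times_S Y$ yields $N\otimes M=\psi_!\un_Z[\delta(X)+\delta(Y)]$, where $\psi\colon Z\to S$ is the quasi-finite composite. This is a non-positive shift of the generator $\psi_!\un_Z[\delta(Z)]$ provided $\delta(Z)\leqslant \delta(X)+\delta(Y)$, which holds because $\delta(Z)\leqslant \min(\delta(X),\delta(Y))$ by the quasi-finiteness of both projections $Z\to X$ and $Z\to Y$ (and $\delta$ is non-negative on non-empty schemes, up to the irrelevant freedom of shifting $\delta$ uniformly).

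For (3), the same template applies to $g^*[d]$: on a generator $f_!\un_Y[\delta(Y)]$ of $\DM^A_{\et}(S,R)^{hp\leqslant 0}$, base change produces $g^*f_!\un_Y[\delta(Y)+d]=f'_!\un_{Y'}[\delta(Y)+d]$ with $Y'=Y\times_S X$ and $f'\colon Y'\to X$ quasi-finite. For $y'\in Y'$ with images $y\in Y$, $x\in X$, $s\in S$, the quasi-finiteness of $f$ and $f'$ combined with the relative dimension bound on $g$ yield
$$\delta_{Y'}(y')=\delta_X(x)=\delta_S(s)+\mathrm{tr.deg}(k(x)/k(s))\leqslant \delta_Y(y)+d\leqslant \delta(Y)+d,$$
hence $\delta(Y')\leqslant \delta(Y)+d$, so $f'_!\un_{Y'}[\delta(Y)+d]$ lies in $\DM^A_{\et}(X,R)^{hp\leqslant 0}$. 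Consequently $g^*[d]$ is right $t_{hp}$-exact, which gives the $t_{hp}$-adjointness of the pair. The main obstacle in all three parts is the careful bookkeeping of dimension functions through iterated base change squares, and in (3) the explicit inequality $\delta(Y')\leqslant \delta(Y)+d$ is precisely what makes the shift by $[d]$ the right correction for the change in relative dimension.
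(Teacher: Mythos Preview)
Your arguments for (1) and (3) are correct and follow exactly the approach intended by the paper (which defers to \cite[4.1.2]{AM1}): check right $t_{hp}$-exactness of the left adjoint on the generating family and control the shifts via the compatibility of the induced dimension function with quasi-finite base change. The bookkeeping in (3) is done correctly.

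There is, however, a genuine gap in your treatment of (2). The appeal to ``the irrelevant freedom of shifting $\delta$ uniformly'' is not valid here: statement~(2) is \emph{not} invariant under replacing $\delta$ by $\delta+c$. Such a shift replaces the perverse homotopy t-structure by its translate, so the hypothesis ``$M$ is $t_{hp}$-non-positive'' and the conclusion ``$-\otimes M$ preserves $t_{hp}$-non-positive objects'' change in a non-matching way (the hypothesis becomes $M\in\mc{D}^{hp\leqslant -c}$ for the original t-structure, while the conclusion becomes preservation of $\mc{D}^{hp\leqslant -c}$). Concretely, take $S=\Spec(k)$ with $\delta(k)=-1$: then $M=\un_S[-1]$ is a generator, hence $t_{hp}$-non-positive, but $M\otimes M=\un_S[-2]$ sits in perverse degree~$1$ and is not $t_{hp}$-non-positive. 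Your inequality $\delta(Z)\leqslant\delta(X)+\delta(Y)$ is precisely what the generator argument requires, and from $\delta(Z)\leqslant\min(\delta(X),\delta(Y))$ it follows only under the extra hypothesis $\delta\geqslant 0$. Note that the analogous statement for the \emph{ordinary} homotopy t-structure (which is what \cite[4.1.2]{AM1} actually proves) needs no such inequality, since the generators carry no shift and one gets $N\otimes M=\psi_!\un_Z$ on the nose; the transcription to the perverse case is where the normalisation of $\delta$ enters. You should either add the hypothesis $\delta\geqslant 0$ to (2) or flag that this part requires it.
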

\begin{proof} The proof is the same as the proof of \cite[4.1.2]{AM1}
\end{proof}
\begin{corollary}\label{AM.t-adj2} Let $f$ be a morphism of schemes with a given dimension function. Then, 
\begin{enumerate} 
\item If $f$ is étale, the functor $f^*=f^!=\omega^0f^!$ is $t_{hp}$-exact.
\item If $f$ is finite, the functor $f_!=f_*=\omega^0f_*$ is $t_{hp}$-exact.
\end{enumerate}
\end{corollary}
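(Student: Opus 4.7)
The plan is to reduce both statements directly to Proposition \ref{AM.t-adj} together with the standard compatibilities of \Cref{omega^0 prt} and \Cref{f_!}. Both étale and finite morphisms have relative dimension zero, which is precisely the hypothesis under which Proposition \ref{AM.t-adj}(3) applies with $d=0$.

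First I would record the functor identifications on Artin motives. For $f$ étale, purity gives $f^!\simeq f^*$. Moreover, \Cref{f_!}(1) ensures that $f^*$ preserves Artin motives, so the counit $\delta\colon\omega^0\rar \mathrm{id}$ of \Cref{omega^0 prt}(1) is an equivalence on $f^*M$ whenever $M$ is Artin. Thus $\omega^0f^! = f^! = f^*$ on $\DM_{\et}^A$. For $f$ finite, proper base change gives $f_!\simeq f_*$, and since $f$ is in particular quasi-finite, \Cref{f_!}(2) guarantees that $f_!$ preserves Artin motives; hence $\omega^0f_* = f_* = f_!$ on $\DM_{\et}^A$.

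With these identifications in hand, for (1) I would invoke Proposition \ref{AM.t-adj}(1) to conclude that $\omega^0f^! = f^*$ is left $t_{hp}$-exact, and Proposition \ref{AM.t-adj}(3) with $g=f$ (which has $\dim(f)=0$) and $d=0$ to conclude that $f^*$ is right $t_{hp}$-exact; combining these yields t-exactness. For (2), Proposition \ref{AM.t-adj}(1) shows that $f_!$ is right $t_{hp}$-exact, and Proposition \ref{AM.t-adj}(3) with $d=0$ shows that $\omega^0f_* = f_*$ is left $t_{hp}$-exact, whence t-exactness.

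I do not expect any genuine obstacle: the whole argument is a bookkeeping combination of the two halves of the preceding proposition with the preservation properties of Artin motives under $f^*$ and $f_!$. The only thing to be mildly careful about is the identification $\omega^0f^! = f^!$ in the étale case and $\omega^0 f_* = f_*$ in the finite case, which is why the Artin-preservation statements of \Cref{f_!} have to be invoked before one reads off t-exactness from the adjunctions.
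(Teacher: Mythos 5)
Your proof is correct, and since the paper states this as a corollary with no accompanying proof, the expected argument is exactly the one you give: combine the right $t_{hp}$-exactness from the quasi-finite adjunction $(f_!,\omega^0 f^!)$ with the complementary exactness from the $(g^*[d],\omega^0 g_*[d])$ adjunction at $d=0$, after first using the compatibility morphisms $\gamma_f$ (étale case) and $\beta_f$ (finite case) from \Cref{omega^0 prt}, together with the preservation of Artin motives from \Cref{f_!}, to identify $\omega^0 f^!$ with $f^!=f^*$ and $\omega^0 f_*$ with $f_*=f_!$ on $\DM^A_{\et}$. This is the intended argument; no further comment is needed.
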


\begin{proposition}\label{glueing perverse} Let $(S,\delta)$ be a d-scheme, let $R$ be a commutative ring, let $i\colon F\rar S$ be a closed immersion and $j\colon U\rar S$ be the open complement. The perverse homotopy t-structure on $\mc{DM}^A_{\et}(S,R)$ is obtained by gluing the t-structures of $\mc{DM}^A_{\et}(U,\Z_\ell)$ and $\mc{DM}^A_{\et}(F,R)$ (see \cite[1.4.9]{bbd}) \textit{i.e.} for any object $M$ of $\mc{DM}^A_{\et}(S,R)$, we have
\begin{enumerate}\item $M\geqslant_{hp} 0$ if and only if $j^*M \geqslant_{hp} 0$ and $\omega^0 i^! M \geqslant_{hp} 0$.
\item $M \leqslant_{hp} 0$ if and only if $j^*M\leqslant_{hp} 0$ and $i^*M \leqslant_{hp} 0$.
\end{enumerate}
\end{proposition}
\begin{proof} This follows from the usual properties of the six functors, \Cref{AM.t-adj} and \Cref{AM.t-adj2}.
\end{proof}

\begin{corollary}\label{weak locality perverse} Let $(S,\delta)$ be a d-scheme, let $R$ be a commutative ring and let $M$ be an object of $\mc{DM}^A_{\et}(S,R)$. Then, we have
\begin{enumerate}\item $M\geqslant_{hp} 0$ if and only if there is a stratification of $S$ such that for any stratum $i\colon T\hookrightarrow S$, we have $\omega^0 i^! M \geqslant_{hp} 0$.
\item $M \leqslant_{hp} 0$ if and only if there is a stratification of $S$ such that for any stratum $i\colon T\hookrightarrow S$, we have $i^*M \leqslant_{hp} 0$.
\end{enumerate}
\end{corollary}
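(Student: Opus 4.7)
The plan is to deduce the corollary from \Cref{glueing perverse} by induction on the number of strata of the stratification.

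For the ``only if'' directions, any noetherian scheme admits a stratification (e.g.\ obtained by noetherian induction, peeling off an equidimensional open stratum at each stage). Given any such stratification and any stratum $i_T\colon T \hookrightarrow S$, factor the locally closed immersion as $T \xrightarrow{u} \overline{T} \xrightarrow{c} S$ with $u$ an open immersion and $c$ a closed immersion. Then \Cref{omega^0 prt}(2)(b) yields a canonical equivalence $\omega^0 i_T^! M \simeq u^* \omega^0 c^! M$. Since $(c_!,\omega^0 c^!)$ is a $t_{hp}$-adjunction by \Cref{AM.t-adj}(1) and $u^*$ is $t_{hp}$-exact by \Cref{AM.t-adj2}(1), the functor $\omega^0 i_T^!$ preserves $t_{hp}$-non-negative objects, giving (1). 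For (2), \Cref{glueing perverse} applied to the closed/open decomposition along $c$ in $\overline{T}$ shows that $c^*$ preserves $t_{hp}$-non-positive objects; together with exactness of $u^*$ and the identity $i_T^* = u^* c^*$, this settles the ``only if'' direction of (2).

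For the ``if'' directions, I proceed by induction on the number of strata. The base case of a single stratum is tautological, the stratum being $S$ itself. For the inductive step, choose a stratum $i\colon F \hookrightarrow S$ that is closed in $S$ --- such a stratum exists in any finite stratification because the axiom ``the closure of a stratum is a union of strata'' lets one iterate passing to boundaries. Let $j\colon U \hookrightarrow S$ be the open complement; the remaining strata form a stratification of $U$ with strictly fewer strata. For each stratum $T \subseteq U$, denote by $i'_T\colon T \hookrightarrow U$ the induced locally closed immersion. Since $j$ is étale, the exchange isomorphisms of \Cref{omega^0 prt} together with $j^* \omega^0 \simeq \omega^0 j^*$ yield $\omega^0 i_T^! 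M \simeq \omega^0 (i'_T)^! (j^* M)$ and $i_T^* M \simeq (i'_T)^* (j^* M)$. Applying the induction hypothesis to the Artin motive $j^* M$ over $U$ with its induced stratification produces $j^* M \geqslant_{hp} 0$ in case (1), respectively $j^* M \leqslant_{hp} 0$ in case (2). Combined with the hypothesis at the closed stratum $F$, \Cref{glueing perverse} then delivers $M \geqslant_{hp} 0$, respectively $M \leqslant_{hp} 0$.

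The only non-routine point is the manipulation of $\omega^0$ through the open-then-closed factorization of each stratum inclusion, which rests squarely on the compatibilities of \Cref{omega^0 prt}(2)(b) and (4)(a); once these are in hand the induction unfolds mechanically.
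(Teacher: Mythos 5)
Your proof is correct and is precisely the standard induction-on-strata argument that the paper leaves implicit, since it states the corollary without proof as an immediate consequence of \Cref{glueing perverse}. The details you supply — factoring each stratum inclusion as open-into-closed, using that $u^*$ and $\alpha_u$ behave well for $u$ étale, that $(c_!,\omega^0 c^!)$ and $(c^*,\omega^0 c_*)$ are $t_{hp}$-adjunctions for $c$ closed, and peeling off a closed stratum for the induction — are exactly what is needed and check out.
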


\begin{proposition}\label{boundedness} Let $(S,\delta)$ be a d-scheme and let $R$ be a commutative ring. Then, the constructible objects are bounded with respect to the perverse homotopy t-structure on $\DM^A_{\et}(S,R)$.
\end{proposition}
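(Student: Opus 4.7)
The full subcategory of $\DM^A_{\et}(S,R)$ consisting of $t_{hp}$-bounded objects is a thick stable subcategory. By \Cref{AM.generators}, $\DM^A_{\et,c}(S,R)$ is the thick subcategory of $\DM^A_{\et}(S,R)$ generated by the motives $h_S(X)=f_*\un_X$ with $f\colon X\rar S$ finite, so it suffices to verify that each such generator is $t_{hp}$-bounded. For finite $f$, \Cref{AM.t-adj2} asserts that $f_*=f_!$ is $t_{hp}$-exact, so boundedness of $h_S(X)$ reduces to boundedness of $\un_X$ in $\DM^A_{\et}(X,R)$. The problem thus becomes: for every scheme $X$, the unit $\un_X$ is perverse homotopy bounded.

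For the upper bound, the identity $\mathrm{id}_X$ is quasi-finite, so by \Cref{perverse} the motive $\un_X[\delta(X)]=M_X^{\BM}(X)[\delta(X)]$ is (a direct sum, on each equidimensional component, of) a defining generator of the non-positive part of the perverse homotopy t-structure. Since $\delta$ takes only finitely many values on the noetherian scheme $X$, the object $\un_X$ lies in $\DM^A_{\et}(X,R)^{hp\leqslant N}$ for some integer $N$.

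For the lower bound, I argue by noetherian induction on $X$. Choose via \Cref{AM.stratification} (applied to the smooth Artin motive $\un_X$) a dense open immersion $j\colon U\rar X$ with $U$ nil-regular, and let $i\colon F\rar X$ be its closed complement, of strictly smaller dimension. The localization triangle $j_!\un_U\rar \un_X\rar i_*\un_F$ reduces the problem to showing that both $i_*\un_F$ and $j_!\un_U$ are $t_{hp}$-bounded. The term $i_*\un_F$ is bounded by the inductive hypothesis applied to $\un_F$ on $F$, combined with the $t_{hp}$-exactness of $i_*$ (\Cref{AM.t-adj2}). For $j_!\un_U$, I invoke \Cref{glueing perverse}: one has $i^* j_!\un_U=0$ and $j^* j_!\un_U=\un_U$; the unit $\un_U$ is bounded on the nil-regular $U$ since, via \Cref{smooth Artin motives}, it corresponds to the constant Ind-lisse sheaf concentrated in a single perverse homotopy degree; so there remains to show that $\omega^0 i^! j_!\un_U$ is bounded below on $F$. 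From the colocalization triangle $i_*i^!j_!\un_U\rar j_!\un_U\rar j_*\un_U$ one extracts the standard formula $i^! j_!\un_U = i^*j_*\un_U[-1]$, and then \Cref{omega^0 prt}(2)(b)(iii) (applied to $M=j_*\un_U$, whose restriction to $U$ is the Artin motive $\un_U$) gives
\[\omega^0 i^! j_!\un_U = i^*\omega^0 j_*\un_U[-1].\]
The amplitude of this object is controlled fibrewise by the Artin-truncation computations of Section~\ref{section 3}: at each $x\in F$, the cohomology of $i_x^*\omega^0 j_*\un_U$ is computed by \Cref{omega^0 sur un corps} and \Cref{omega^0 corps} in terms of étale cohomology of the fibre of $U$ over $\Spec k(x)$, which lies in cohomological degrees $[0,2\dim U+1]$, yielding the required uniform lower bound.

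The principal obstacle is this last step: since $\omega^0 j_*\un_U$ need not be constructible, as \Cref{quand c'est qu'c'est constructible} shows with integral coefficients, its amplitude cannot be deduced from a constructibility-based induction and must instead be extracted from the explicit fibrewise formulas of the previous section, combined with the finite étale cohomological dimension of noetherian finite-dimensional schemes.
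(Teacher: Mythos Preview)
Your reduction to boundedness of $\un_X$, and your upper bound via the generators, agree with the paper. The divergence is in the lower bound.

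The paper's argument is a one-liner: by \cite[4.1.4]{AM1}, both $h_S(Y)$ (for $Y$ finite over $S$) and $\un_S$ lie in the heart of the \emph{ordinary} homotopy t-structure, so $\Map(h_S(Y),\un_S)$ is $(-1)$-connected; since the perverse-homotopy generators differ from the ordinary ones only by a shift of $\delta(Y)$, and $\delta$ is bounded below on the noetherian finite-dimensional scheme $S$, this directly yields $\un_S\geqslant_{hp} c$ for any lower bound $c$ of $\delta$. No stratification, no $\omega^0$, no restriction on $R$.

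Your noetherian induction has a genuine gap. In the localization triangle $j_!\un_U\to\un_X\to i_*\un_F$, the gluing description (\Cref{glueing perverse}) asks you to control both $j^*j_!\un_U=\un_U$ on $U$ and $\omega^0 i^! j_!\un_U$ on $F$. For the first piece you claim $\un_U$ is concentrated in a single perverse homotopy degree via \Cref{smooth Artin motives}; but that result concerns the \emph{ordinary} t-structure on lisse sheaves, not the perverse homotopy t-structure, and the perverse statement you have in mind (\Cref{AM.locality}(2)(a) or \Cref{delta(S)}) is proved \emph{after} \Cref{boundedness} in the paper. Since $\dim U=\dim X$, the induction hypothesis does not cover $U$ either, so this step is circular.

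There are two further issues with your endgame. First, the Section~\ref{section 3} results you invoke (\Cref{omega^0 sur un corps}, \Cref{omega^0 corps}) require $R$ to be a localization of the ring of integers of a number field, whereas the proposition is stated for arbitrary $R$. Second, knowing $i_x^*\omega^0 j_*\un_U$ at each point $x\in F$ does not, by itself, yield a \emph{lower} perverse bound for $i^*\omega^0 j_*\un_U$ on $F$: lower bounds in the gluing formalism are governed by $\omega^0 i_x^!$, not $i_x^*$. What would work here is the ordinary-t-structure observation that $\omega^0$ and $j_*$ are left $t_{\ord}$-exact and $i^*$ is $t_{\ord}$-exact, so $i^*\omega^0 j_*\un_U\geqslant_{\ord}0$, whence perverse-bounded below since $\delta$ is bounded --- but that is precisely the paper's shortcut, applied one localization triangle later than necessary.
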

\begin{proof} The subcategory of bounded objects is thick. Therefore, it suffices to show that the motives of the form $f_*\un_X$ with $f\colon X\rar S$ finite are bounded. By \Cref{AM.t-adj2}, the functor $f_*$ is $t_{hp}$-exact when $f$ is finite. Therefore, it suffices to show that the object $\un_S$ is bounded with respect to $t_{hp}$ for any scheme $S$. We already know that it lies in degree at most $\delta(S)$ by definition of $t_{hp}$.

Let $Y$ be a finite $S$-scheme. The objects $h_S(Y)$ and $\un_S$ belong to the heart of the ordinary homotopy t-structure by \cite[4.1.4]{AM1}. Therefore, the complex $\Map(h_S(Y),\un_S)$ is $(-1)$-connected. Therefore, if $c$ is a lower bound for $\delta$, the complex $\Map(h_S(X),\un_S[c])$ is $(\delta(X)-1)$-connected and therefore, the object $\un_S$ is bounded below by $c$. 
\end{proof}

\subsection{Pointwise description}
In the following section, we give a pointwise description of the perverse homotopy t-structure as well as its behavior on smooth Artin motives. It will be the basis for our understanding of the t-structure. This pointwise description is very analogous to the defining property of the perverse t-structure on étale sheaves (with an extra $\omega^0$ that ensures that we stay in the world of Artin motives).

We begin by exploring how the t-structure behaves when we change the ring of coefficients.

\begin{emptypar}\label{Change of coefficients}
Let $\sigma\colon R \rar R'$ be a commutative ring homomorphism. 
We have a functorial adjunction 
$$\sigma^* \colon \DM_{\et}(-,R)\leftrightarrows \DM_{\et}(-,R') \colon \sigma_*,$$ which is compatible with pullbacks and $\otimes$.

If $S$ is a scheme, we can identify $\DM_{\et}(S,R)$ with the category of $R$-modules in $\DM_{\et}(S,\Z)$. The functor $\sigma_*$ is then the forgetful functor, and the functor $\sigma^*$ is the functor $-\otimes_R R'$.

When $R$ is a localization of $\Z$, is $n$ an integer and let $A$ is another localization of $\Z$, we will denote by $\sigma_n\colon R\rar R \otimes_\Z \Z/n\Z$ and by $\sigma_A\colon R\rar R\otimes_{\Z} A$ the natural maps.
\end{emptypar}


\begin{lemma}\label{conservativity}(\cite[5.4.12]{em}) 
Let $S$ be a scheme and let $R$ be commutative ring, then with the above notations, the family $(\sigma_\Q^*,\sigma_p^* \mid p \text{ prime })$ is conservative.
\end{lemma}

We have another similar conservativity lemma. If $p$ is a prime number, denote by $\Z_{(p)}$ the localization of $\Z$ with respect to the prime ideal generated by $p$. Denote by $r_p$ the ring morphism $\sigma_{\Z_{(p)}}$. Then, the morphisms $\sigma_{\Q}$ and $\sigma_p$ factor through $r_p$. This yields the following corollary.

\begin{corollary}\label{rl} Let $S$ be a scheme and let $R$ be a commutative ring, then with the above notations, the family $(r_p^* \mid p \text{ prime })$ is conservative.
\end{corollary}

\begin{proposition}\label{coeff change perverse} Let $(S,\delta)$ be a d-scheme, let $R$ be a commutative ring, 
Recall the notations of \Cref{Change of coefficients}.
Then,
\begin{enumerate}\item The functors $(\sigma_A)_*$ and $(\sigma_n)_*$ are $t_{hp}$-exact.
\item the functor $\sigma_A^*$ is $t_{hp}$-exact.
\end{enumerate}
\end{proposition}
\begin{proof} This follows from \cite[1.1.5]{AM1} and from \cite[1.2.1]{AM1}.
\end{proof}

We can now prove the main result of this section.

\begin{proposition}\label{AM.locality}
Let $(S,\delta)$ be a d-scheme and $R$ be a localization of $\Z$. If $x$ is a point of $S$, denote by $i_x\colon\{ x\}\rar S$ the inclusion. Then, the following properties hold.
\begin{enumerate} \item Let $M$ be in $\DM^A_{\et}(S,R)$. Then, 
    \begin{enumerate}
\item $M\geqslant_{hp}0$ if and only if $M$ is bounded below with respect to the ordinary homotopy t-structure $t_\mathrm{ord}$ of \cite{AM1} and for any point $x$ of $S$, we have $$\omega^0i_x^!M\geqslant_{hp} 0,$$ \textit{i.e.} $\omega^0i_x^!M\geqslant_{\ord} -\delta(x)$.
\item Assume that $M$ is constructible. Then, $M\leqslant_{hp}0$ if and only if for any point $x$ of $S$, we have $$i_x^*M\leqslant_{hp} 0,$$ \textit{i.e.} $i_x^*M\leqslant_{\ord} -\delta(x)$.
\end{enumerate}
\item Assume that the scheme $S$ is regular and connected. Then, the functor $\rho_!$ induces a t-exact functor $$\rho_!\colon \Shv_{\lis}(S,R)\rar \DM_{\et}^A(S,R)$$ when the left hand side is endowed with the ordinary t-structure shifted by $\delta(S)$ and the right hand side is endowed with the perverse homotopy t-structure.
\end{enumerate}
\end{proposition}
\begin{remark}\label{hp is hp} 
The above result shows that the perverse homotopy t-structure in the sense of \Cref{reduced l-adic real} exists if and only if the perverse homotopy t-structure restricts to $\DM^A_{\et,c}(S,R)$ (and then both t-structure coincide) which justifies the terminology.

Indeed, let $M$ be in $\DM^A_{\et,c}(S,R)$. Then, 
\begin{align*}M\leqslant_{hp} 0 &\Longleftrightarrow \forall x \in S, i_x^*M\leqslant_{hp} 0 \\ &\Longleftrightarrow \forall x \in S, \forall \ell \text{ prime }i_x^*\overline{\rho}_\ell(M)\leqslant_{p} 0  \\
&\Longleftrightarrow \forall \ell \text{ prime }\overline{\rho}_\ell(M)\leqslant_{p} 0.
\end{align*}
with $t_p$ the perverse t-structure on $\ell$-adic sheaves.
The first equivalence indeed follows from Assertion (1)(b); the second equivalence follows from \Cref{fields} and the fact that the $\ell$-adic realization respects the six functors; finally the third equivalence is a characterization of the perverse t-structure (see \cite[2.2.12]{bbd}).

Hence, if the perverse homotopy t-structure induces a t-structure on $\DM^A_{\et,c}(S,R)$, it is the perverse homotopy t-structure in the sense of \Cref{reduced l-adic real}.
Conversely, assume that the perverse homotopy t-structure in the sense of \Cref{reduced l-adic real} exists and denote it by $t_{hp}'$. Then, its t-non-positive objects are the $t_{hp}$-non-negative objects which are constructible. Therefore, the $t_{hp}'$-positive objects are those constructible objects $N$ such that for all constructible object $M$ with $M\leqslant_{hp} 0$, we have 
$$\Hom_{\DM^A_{\et}(S,R)}(M,N)=0.$$
But the generators of the $t_{hp}$ are constructible and thus, those objects are exactly the $t_{hp}$-positive objects which are constructible.
\end{remark}

\begin{proof} 

\textbf{Proof of Assertion (1)(a):} 

If $M\geqslant_{hp} 0$, we already know from \Cref{AM.t-adj} that for any point $x$ of $S$, we have $\omega^0i_x^!M\geqslant_{hp} 0$. Furthermore, if $c$ is a lower bound for the function $\delta$ on $S$, the complex $\Map(M_S(X),M)$ is $(c-1)$-connected for any étale $S$-scheme $X$. Therefore, the Artin motive $M$ is bounded below with respect to the ordinary homotopy t-structure. 

Conversely, assume that $M$ is $t_{\ord}$-bounded below and that for any point $x$ of $S$, the Artin motive $\omega^0i_x^!M$ is $t_{hp}$-non-negative. Let $f\colon X\rar S$ be quasi-finite. Then, by \cite[3.1.5]{bondarko-deglise}, we have the $\delta$-niveau spectral sequence 
$$E^1_{p,q}\Rightarrow \Hl^{p+q}\Map_{\DM_{\et}(S,R)}(M_S^{\BM}(X),M)$$
with
$$E^1_{p,q}=\bigoplus_{y\in X,\hspace{1mm} \delta(y)=p} \colim\limits_{U\in \mathrm{Z}(y)}\Hl^{p+q}\Map_{\DM_{\et}(S,R)}(M_S^{\BM}(X\times_S U),M)$$
and for $\in X$, and for any point $y$ of $X$, $\mathrm{Z}(y)$ denotes the set of open neighborhoods of $y$ in the closure $\overline{\{ y\}}$ of $\{ y\}$.

As the sheaf $M$ is bounded below, \Cref{continuity0} below yields
\begin{align*}\colim\limits_{U\in \mathrm{Z}(y)}\Map_{\DM_{\et}(S,R)}(M_S^{\BM}(X\times_S U),M)&=\Map_{\DM_{\et}(S,R)}(M_S^{\BM}(X\times_S \{ y\}),M)\\
&=\Map_{\DM_{\et}(k(f(y)),R)}(M_{f(y)}^{\BM}(y),\omega^0i_{f(y)}^!M).
\end{align*}


Thus, 
$$E^1_{p,q}=\bigoplus_{y\in X,\hspace{1mm} \delta(y)=p} \Hl^{p+q}\Map_{\DM_{\et}(k(f(y)),R)}(M_{f(y)}^{\BM}(y),\omega^0i_{f(y)}^!M).$$

Since for any point $x$ of $S$, we have $\omega^0i_x^! M\geqslant_{hp} 0$, the complex $$\Map(M_{f(y)}^{\BM}(y),\omega^0i_{f(y)}^!M)$$ is $(-\delta(X)-1)$-connected for any point $y$ of $X$. Hence, if $n>\delta(X),$ the $R$-module $\Hl^{-n}\Map_{\DM_{\et}(S,R)}(M^{\BM}_S(X),M)$ vanishes. Therefore, the Artin motive $M$ is $t_{hp}$-non-negative.

\textbf{Proof of Assertion (2)(a):} 

\cite[3.1.11]{AM1} implies that the functor $$\rho_!\colon \Shv_{\In\lis}(S,R)\rar \DM_{\et}^A(S,R)$$ is right t-exact when the left hand side is endowed with the ordinary t-structure shifted by $-\delta(S)$ and the right hand side is endowed with the perverse homotopy t-structure. Since the scheme $S$ is regular and connected $\delta(x)=-\codim_S(x)$ is a dimension function on $S$. In particular, we can assume that $\delta(S)=0$.

By dévissage, it therefore suffices to show that any étale sheaf which belongs to $\Loc_S(R)$ is send through the functor $\rho_!$ to a $t_{hp}$-non-negative object. Let $M$ be such an étale sheaf. We have an exact triangle $$M\otimes_\Z \Q/\Z[-1]\rar M\rar M\otimes_\Z \Q.$$

The sheaf $M\otimes_\Z \Q$ lies in $\Loc_S(R\otimes_\Z \Q)$. 

Furthermore, if $n$ is a positive integer, the sheaf $M\otimes_\Z \Z/n\Z[-1]$ is t-non-negative with respect to the ordinary t-structure. Let $x$ be a point of $S$. \cite[1.2.4]{AM1} yields $$i_x^!(M\otimes_\Z \Q/\Z[-1])=\colim_n i_x^!(M\otimes \Z/n\Z[-1]).$$

Therefore, since the scheme $S$ is regular, and since the sheaf $M\otimes \Z/n\Z[-1]$ is lisse, we get by absolute purity \cite[XVI.3.1.1]{travauxgabber} that 
$$i_x^!(M\otimes \Z/n\Z[-1])=i_x^*(M\otimes \Z/n\Z[-1])(\delta(x))[2\delta(x)].$$

Therefore, we get $$i_x^!(M\otimes_\Z \Q/\Z[-1])=i_x^*(M\otimes \Q/\Z[-1])(\delta(x))[2\delta(x)].$$ 

Finally, we get $$\omega^0 i_x^!\rho_!(M\otimes \Q/\Z[-1])=\rho_!\left[i_x^*(M\otimes \Q/\Z[-1])(\delta(x))\right][2\delta(x)]$$ where the left hand side in degree $-2\delta(x)$ or more. It is therefore in degree $-\delta(x)$ or more and therefore, the motive $\rho_!(M\otimes \Q/\Z[-1])$ is perverse t-non-negative.


We can therefore assume that the ring $R$ is a $\Q$-algebra. In that case, we claim that $\omega^0i_x^!(M)$ vanishes when $x$ is not the generic point of $S$. This can indeed either be derived from \Cref{omega^0 corps} below or alternatively, the absolute purity property shows that $\omega^0i_x^!$ vanishes on generators of smooth Artin motives which proves that it vanishes on smooth Artin motives.

When $x$ is the generic point of $S$, we get $$\omega^0i_x^!(M)=i_x^*(M).$$ Therefore, the sheaf $\omega^0i_x^!(M)$ is in degree $0=-\delta(x)$. Hence, Assertion (1)(a) ensures that $M$ is perverse homotopy t-non-negative

\textbf{Proof of Assertion (1)(b):} 

If $M\leqslant_{hp} 0$, we already know from \Cref{AM.t-adj} that for any point $x$ of $S$, we have $i_x^* M\leqslant_{hp} 0$. 

We now prove the converse. \Cref{rl} and \Cref{coeff change perverse} ensure that the family $(-\otimes_\Z \Z_{(p)})_{p \text{ prime}}$ is conservative family which is made of $t_{hp}$-exact functors. Using \cite[1.1.6]{AM1}, we can therefore assume that the ring $R$ is a $\Z_{(p)}$-algebra where $p$ is a prime number. 

Write $$S=S[1/p]\sqcup S_{p}$$ where $S_p=S\times_{\Z}\Z/p\Z$. The subscheme $S[1/p]$ of $S$ is open while the subscheme $S_{p}$ is closed. 

Let $M$ be a constructible Artin motive and such that for all point $x$ of $S$, we have $$i_x^*M\leqslant_{hp} 0.$$

\cite[3.5.4]{AM1} ensures that we have a stratification $\mc{S}[1/p]$ of $S[1/p]$ with regular strata and a stratification $\mc{S}_p$ of $S_{p}$ with regular strata, lisse étale sheaves $M_T\in \Shv_{\lis}(T,R[1/p])$ for $T\in \mc{S}_{p}$ and lisse étale sheaves $M_T\in \Shv_{\lis}(T,R)$ for $T\in \mc{S}[1/p]$ such that for all any stratum $T$, we have $$M|_T=\rho_! M_T.$$

\Cref{lemme du point generique} below then ensures that $M|_T\leqslant_{hp} 0$ for each $T\in\mc{S}$. Therefore, using \Cref{weak locality perverse}, we get $$M\leqslant_{hp} 0.$$
\end{proof}

\begin{lemma}\label{continuity0} Let $\mr{D}$ be one of the fibered categories $\DM_{\et}$ or $\Shv(-_{\et})$ and let $R$ be a commutative ring. Consider a scheme $X$ which is the limit of a projective system of schemes with affine transition maps $(X_i)_{i\in I}$.

Let $(M_i)_{i\in I}$ and $(N_i)_{i \in I}$ be two cartesian sections of the fibered category $\mr{D}(-,R)$ over the diagram of schemes $(X_i)_{i\in I}$ and denote by $M$ and $N$ the respective pullback of $M_i$ and $N_i$ along the map $X\rar X_i$. If each $M_i$ is constructible and each $N_i\otimes_\Z \Q/\Z$ is bounded below with respect to the ordinary t-structure on $\mr{D}_{\mathrm{tors}}(X_i,R)$, then the canonical maps 
$$\colim_{i \in I} \Hom_{\mr{D}(X_i,R)}(M_i,N_i) \rar \Hom_{\mr{D}(X,R)}(M,N)$$
$$\colim_{i \in I} \Map_{\mr{D}(X_i,R)}(M_i,N_i) \rar \Map_{\mr{D}(X,R)}(M,N)$$
are equivalences.
\end{lemma}
\begin{proof} Following the proof of \cite[6.3.7]{em}, we can assume that the ring $R$ is a $\Q$-algebra or a $\Z/n\Z$-algebra with $n$ a power of some prime number $p$.

If $R$ is a $\Q$-algebra, the result is \cite[5.2.5]{em} when $\mr{D}=\mc{DM}_{\et}$ and the result follows from \cite[VI.8.5.7]{sga4} when $\mr{D}=\Shv(-_{\et})$.

If $R$ is a $\Z/n\Z$-algebra, we can assume that $p$ (and therefore $n$) is invertible on $S$ by virtue of \cite[A.3.4]{em}. Applying the Rigidity Theorem \cite[3.2]{bachmanrigidity}, we can assume that $\mr{D}=\Shv(-_{\et})$. In this case, the result follows from \cite[IX.2.7.3]{sga4}.
\end{proof}

\begin{lemma}\label{lemme du point generique} Let $S$ be a regular connected scheme and $R$ be a regular ring and let $$\eta\colon\Spec(K)\rar S$$ be the generic point of $S$. Then, the functor $$\eta^*\colon\Shv_{\lis}(S,R)\rar \Shv_{\lis}(K,R)$$ is conservative and t-exact with respect to the ordinary t-structure. 
\end{lemma}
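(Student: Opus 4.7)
The plan is to reduce both assertions to the level of hearts via the fiber functor, where everything becomes a statement about restriction of representations along a group homomorphism. First I would pick a geometric generic point $\bar\eta\colon\Spec(\overline{K})\rar \Spec(K)\rar S$ so as to have a common base point on both sides. By \Cref{smooth Artin motives}(3), the fiber functors at $\bar\eta$ identify the hearts $\Loc_S(R)$ and $\Loc_K(R)$ with $\Rep^A(\pi_1^{\et}(S,\bar\eta),R)$ and $\Rep^A(G_K,R)$ respectively, and by \Cref{6 functors lisse}(1) the pullback functor $\eta^*$ on hearts is identified, under these equivalences, with the restriction-of-representations functor along the canonical homomorphism $G_K\rar \pi_1^{\et}(S,\bar\eta)$ (which is in fact surjective by Grothendieck's theorem on the normal scheme $S$, though we will not need surjectivity).

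For t-exactness, I would first observe that the pullback $\eta^*\colon\Shv(S_{\et},R)\rar\Shv(K_{\et},R)$ is t-exact for the ordinary t-structures. Indeed, at the abelian level $\eta^*$ is computed by a filtered colimit of sections over étale neighborhoods of $\bar\eta$, and such filtered colimits of $R$-modules are exact; thus the derived functor is t-exact. Since $\eta^*$ preserves the subcategory of dualizable objects and sends $\Loc_S(R)$ into $\Loc_K(R)$, the induced functor $\Shv_{\lis}(S,R)\rar\Shv_{\lis}(K,R)$ is t-exact for the induced t-structures.

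For conservativity, t-exactness plus the fact that objects of $\Shv_{\lis}(S,R)$ are bounded perfect complexes of local systems reduce the problem to checking that $\eta^*$ is conservative on hearts: if $\eta^*M=0$, then $\eta^*\Hl^n(M)=\Hl^n(\eta^*M)=0$ for all $n$, so it would suffice that each $\Hl^n(M)$ vanishes. On hearts, however, the identification above shows that $\eta^*$ is the restriction of an Artin representation along a group homomorphism, which preserves the underlying $R$-module and is therefore trivially conservative. The only mildly delicate point is the reduction "vanishing is detected by cohomology sheaves", which is really the main (minor) obstacle: it follows from the boundedness of dualizable complexes of étale sheaves on a regular scheme, or equivalently from the fact that the induced t-structure on $\Shv_{\lis}(S,R)$ described in \Cref{smooth Artin motives}(2) is non-degenerate and bounded.
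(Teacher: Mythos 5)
Your proposal is correct and follows the same overall strategy as the paper: establish t-exactness of $\eta^*$, reduce conservativity to the hearts via t-exactness and boundedness of lisse objects, and then identify the heart-level functor with restriction of Artin representations along $G_K\rar \pi_1^{\et}(S,\bar\eta)$. The one genuine difference is the heart-level conservativity argument. The paper invokes \cite[V.8.2]{sga1} to obtain surjectivity of $G_K\rar \pi_1^{\et}(S,\bar\eta)$ (which holds since $S$ is normal) and deduces conservativity from that, whereas you observe — correctly — that restriction along \emph{any} group homomorphism is already conservative, since it commutes with the forgetful functor to $R$-modules and the latter is conservative; so surjectivity is superfluous for this step. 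Your version is marginally more elementary and makes fewer hypotheses on $S$ at this point, while the paper's gives the slightly stronger (but here unused) conclusion of fullness. You are also more explicit than the paper about why $\eta^*$ is t-exact (filtered colimits of sections) and about the non-degeneracy/boundedness of the lisse t-structure needed to pass from cohomology sheaves to objects — points the paper leaves implicit but which are indeed the small gaps worth closing.
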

\begin{proof} The functor $\eta^*$ is t-exact.
Assume that $f\colon C\rar D$ is a map in $\Shv_{\lis}(S,R)$ such that $\eta^*(f)$ is invertible. Then, for any integer $n$, the map $\Hl^n(\eta^*(f))$ is invertible. Since the functor $\eta^*$ is t-exact, the map $\eta^*\Hl^n(f)$ is invertible. 

Let $\xi$ be a geometric point with image $\eta$. By \cite[V.8.2]{sga1}, the canonical map $G_K\rar \pi_1^{\et}(S,\xi)$ is surjective. Thus, the map $$\eta^*\colon \Sh(S_{\et},R)\to \Sh(K_{\et},R)$$ is conservative. Therefore, for any integer $n$, the map $\Hl^n(f)$ is invertible. Thus, the map $f$ is invertible. 
\end{proof}

\begin{corollary}
	Let $(S,\delta)$ be a d-scheme with $S$ regular and connected and assume that the residue characteristic exponents of $S$ are invertible in $R$, then the perverse homotopy t-structure induces a t-structure on $\DM^{smA}_{\et,c}(S,R)$ which is the same as its ordinary t-structure shifted by $\delta(S)$.
\end{corollary}


\begin{corollary}\label{delta(S)} Let $S$ be a regular connected scheme and let $R$ be a regular ring. Then, the Artin motive $\un_S$ is in degree $\delta(S)$ with respect to the perverse homotopy t-structure.    
\end{corollary} 




\begin{definition}\label{perverse smooth Artin motives} Let $S$ be a regular scheme and let $R$ be a localization of $\Z$. Assume that the residue characteristic exponents of $S$ are invertible in $R$. Then, \Cref{AM.locality} implies that the perverse homotopy t-structure restricts to $\DM^{smA}_{\et}(S,R)$. 
The \emph{abelian category of perverse smooth Artin motives} $\mathrm{M}^{smA}_{\mathrm{perv}}(S,R)$ is the heart of the induced t-structure.     
\end{definition}

\section{Rational coefficients}
\subsection{The t-structure}
When $R=\Q$, we show that the perverse homotopy t-structure always exists on constructible Artin motives. 

\begin{proposition} Let $(S,\delta)$ be an excellent d-scheme (meaning that $S$ is excellent).
\begin{enumerate}\label{main thm rationnel}\item  Then, the perverse homotopy t-structure on $\DM^A_{\et}(S,\Q)$ induces a t-structure on the subcategory $\DM^A_{\et,c}(S,\Q)$. 
\item Let $i\colon F\rar S$ be a closed immersion and let $j\colon U\rar S$ be the open complement. 
The perverse homotopy t-structure on $\mc{DM}^A_{\et,c}(S,\Q)$ is obtained by gluing the perverse homotopy t-structures of $\mc{DM}^A_{\et,c}(U,\Q)$ and $\mc{DM}^A_{\et,c}(F,\Q)$ (see \cite[1.4.9]{bbd}).
\item Let $p$ be a prime number. Assume that the scheme $S$ is of finite type over $\mb{F}_p$. Then, the $\ell$-adic realization functor $$\rho_\ell\colon\DM^A_{\et,c}(S,\Q)\rar \mc{D}^A(S,\Q_\ell)$$ is t-exact and conservative when the right hand side is endowed with the perverse homotopy t-structure from \cite{aps}.
\end{enumerate}
\end{proposition}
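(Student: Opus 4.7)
The plan is to prove (1) by Noetherian induction on $S$, then deduce (2) and (3). The base case $S = \Spec(k)$ is Proposition \ref{fields}(2). For the inductive step, fix $M \in \DM^A_{\et,c}(S,K)$. Since $S$ is excellent, Proposition \ref{AM.stratification} produces a dense open immersion $j\colon U \hookrightarrow S$ with $U$ regular such that $j^* M$ is a constructible smooth Artin motive; let $i\colon F \hookrightarrow S$ denote the complementary closed immersion. On $U$, the perverse homotopy t-structure restricts to $\DM^{smA}_{\et,c}(U,K)$ by Proposition \ref{AM.locality}(2)(a), and on $F$ the induced t-structure on $\DM^A_{\et,c}(F,K)$ exists by the inductive hypothesis. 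Applying the gluing of Proposition \ref{glueing perverse} produces truncations ${}^{hp}\tau_{\leqslant 0} M$ and ${}^{hp}\tau_{\geqslant 0} M$ in $\DM^A_{\et}(S,K)$; the content is to verify they are constructible.

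The main obstacle is precisely this preservation of constructibility. Explicitly, the gluing builds ${}^{hp}\tau_{\leqslant 0} M$ from the truncations of $j^* M$ and $\omega^0 i^! M$ through exact triangles involving the functors $j_!, i_*, i^*, j^*$ and $\omega^0 j_*$. In the rational setting, each of these preserves constructibility: $j_!, i_*, i^*, j^*$ always do (Proposition \ref{f_!}), while $\omega^0 j_*$ does thanks to Proposition \ref{omega^0} (this is precisely where the hypothesis $R = K$ is essential, since Proposition \ref{quand c'est qu'c'est constructible} shows this fails outside the rational case). Moreover, the Affine Lefschetz statement for number field coefficients (the $\omega^0 f_*$ clause of the theorem recalled after Proposition \ref{AM.locality} in the introduction) ensures that $\omega^0 j_*$ is perverse homotopy t-exact, which controls the t-degrees of the truncations. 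Combining these facts, the glued truncations belong to $\DM^A_{\et,c}(S,K)$, completing the induction.

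Part (2) is then a formal consequence: with preservation of constructibility of the six operations established, the pair $(i_*, \omega^0 j_*)$ provides a gluing of $\DM^A_{\et,c}(S,K)$ from $\DM^A_{\et,c}(U,K)$ and $\DM^A_{\et,c}(F,K)$; the axioms of \cite[1.4.3]{bbd} restrict directly from Proposition \ref{glueing perverse}, and the identification of the glued t-structure by \cite[1.4.9]{bbd} follows. For part (3), the hypothesis that $S$ is of finite type over $\mb{F}_p$ with $v$ not extending the $p$-adic valuation forces the prime $\ell$ associated to $v$ to differ from $p$ and hence to be invertible on $S$; therefore $\rho_v$ lands in the Artin $\ell$-adic category $\mc{D}^A(S,K_v)$ of \cite{aps}. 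For t-exactness, the local description of the perverse homotopy t-structure (Proposition \ref{AM.locality}(1)) together with its $\ell$-adic analogue from \cite{aps} and the compatibility of $\rho_v$ with the six functors reduce the claim to each residue field, where Proposition \ref{fields}(4) applies. For conservativity, the family $(i_x^*)_{x \in S}$ is conservative on $\DM^A_{\et,c}(S,K)$ by Proposition \ref{AM.locality}(1)(b), so it suffices to check at each point $x$; there $\rho_v$ identifies via Proposition \ref{fields}(3) with the scalar extension functor $\mc{D}^b(\Rep^A(G_{k(x)}, K)) \to \mc{D}^b(\Rep^A(G_{k(x)}, K_v))$ along $K \hookrightarrow K_v$, which is faithfully flat and thus conservative.
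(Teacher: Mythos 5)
Your proof of (1) is correct in substance but follows a genuinely different route from the paper. You run a direct Noetherian induction on $S$ — fix $M$, find a dense regular $U$ with $j^*M$ smooth Artin, apply the BBD gluing to truncate — whereas the paper adapts the \cite[2.2.10]{bbd} machinery of ``superadmissible pairs,'' building a full stratification $(\mathcal{S},\mathcal{L})$ and showing all six recollement functors preserve $(\mathcal{S},\mathcal{L})$-constructibility before gluing over all strata simultaneously. Both approaches work. Yours is the more economical argument; the paper's has the advantage that the identical construction transports verbatim to the Artin $\ell$-adic category of \cite[4.7.1]{aps}, which is what makes the proof of part (3) a one-liner there. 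Your proof of (3) instead goes pointwise through the local description of \Cref{AM.locality}(1), which is a perfectly viable alternative, but you should cite \cite[4.6.3]{aps} explicitly for the commutation $\omega^0\circ\rho_v\simeq\rho_v\circ\omega^0$ — without it the left $t$-exactness direction (which involves $\omega^0 i_x^!$) does not reduce to residue fields.

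There is one repairable flaw: to ``control the $t$-degrees of the truncations'' you invoke the Affine Lefschetz statement for $\omega^0 f_*$ (\Cref{Affine Lefschetz direct image}). That result appears \emph{after} \Cref{main thm rationnel} in the paper and its proof (via \Cref{affine lef direct image} and \Cref{lemme aff lef direct}) uses material downstream of this proposition, so citing it here is circular; moreover it requires a resolution-of-singularities hypothesis which the proposition does not impose. Fortunately you do not need full $t_{hp}$-exactness of $\omega^0 j_*$: what the BBD truncation construction requires is only that $\omega^0 j_*\tau_{\geqslant 1}^{U}j^*M$ lie in degree $\geqslant 1$, i.e. left $t_{hp}$-exactness of $\omega^0 j_*$, and this is already formal from the $t_{hp}$-adjunction $(j^*,\omega^0 j_*)$ of \Cref{AM.t-adj}(3) (with $g=j$ an open immersion, $d=0$). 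Replacing the Affine Lefschetz citation with \Cref{AM.t-adj} removes the circularity and makes the induction close. Everything else — the use of \Cref{omega^0} for constructibility of $\omega^0 j_*$, \Cref{AM.locality}(2)(a) on $U$, the inductive hypothesis on $F$, \Cref{f_!} and \Cref{AM.six foncteurs constructibles} for the remaining recollement functors, and the conservativity argument for (3) via faithful flatness of $K\hookrightarrow K_v$ — is sound.
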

\begin{proof} We can assume that $S$ is reduced. 
	We adapt the ideas of \cite[2.2.10]{bbd} to our setting. This technique is very similar to 
\cite[4.7.1]{aps}.

We first need a few definitions:
\begin{itemize}
    \item If $\mc{S}$ is a stratification of $S$, we say that a locally closed subscheme $X$ of $S$ is an $\mc{S}$-subscheme of $S$ if $X$ is a union of strata of $\mc{S}$.
    \item A pair $(\mc{S},\mc{L})$ is \emph{admissible} if 
        \begin{itemize}
            \item $\mc{S}$  is a stratification of $S$ with regular strata everywhere of the same dimension. 
            \item for every stratum $T$ of $\mc{S}$, $\mc{L}(T)$ is a finite set of isomorphism classes of objects of $\mathrm{M}^{smA}_{\mathrm{perv}}(S,\Q)$ (see \Cref{perverse smooth Artin motives}).
        \end{itemize}
    \item If $(\mc{S},\mc{L})$ is an admissible pair and $X$ is an $\mc{S}$-subscheme of $S$, the category $\mc{DM}_{\mc{S},\mc{L}}(X,\Q)$ of \emph{$(\mc{S},\mc{L})$-constructible Artin motives} over $X$ is the full subcategory of $\mc{DM}^A_{\et}(X,\Q)$ made of those objects $M$ such that for any stratum $T$ of $\mc{S}$ contained in $X$, the restriction of $M$ to $T$ is a smooth Artin motive and its perverse cohomology sheaves are successive extensions of objects whose isomorphism classes lie in $\mc{L}(T)$. 
    \item A pair $(\mc{S}',\mc{L}')$ refines a pair $(\mc{S},\mc{L})$ if every stratum $S$ of $\mc{S}$ is a union of strata of $\mc{S}'$ and any perverse smooth Artin motive $M$ over a stratum $T$ of $\mc{S}$ whose isomorphism class lies in $\mc{L}(T)$ is $(\mc{S}',\mc{L}')$-constructible. 
\end{itemize}

If $(\mc{S},\mc{L})$ is an admissible pair and if $X$ is an $\mc{S}$-subscheme of $S$, the category $\mc{DM}_{\mc{S},\mc{L}}(X,\Q)$ is a stable subcategory of $\mc{DM}^A_{\et}(S,\Q)$ (\textit{i.e.} it is closed under finite (co)limits). 

If $(\mc{S},\mc{L})$ is an admissible pair and  $i\colon U \rar V$ is an immersion between $\mc{S}$-subschemes of $S$, the functors $i_!$ and $i^*$ preserve $(\mc{S},\mc{L})$-constructible objects. 

Now, we say that an admissible pair $(\mc{S},\mc{L})$ is \emph{superadmissible} if letting $i\colon T \rar S$ be the immersion of a stratum $T$ of $\mc{S}$ into $S$ and letting $M$ be a perverse smooth Artin motive over $T$ whose isomorphism class lies in $\mc{L}(T)$, the complex $\omega^0i_*M$ is $(\mc{S},\mc{L})$-constructible. 

We now claim that an admissible pair $(\mc{S},\mc{L})$ can always be refined into a superadmissible one. 

Assume that this is true when we replace $S$ with the union of the strata of dimension $n$ or more. Let $i:T \rar S$ and $i':T'\rar S$ be immersions of strata of $\mc{S}$. Let $M$ be a perverse smooth Artin motive on $T$ whose isomorphism class lies in $\mc{L}(T)$. If $T$ and $T'$ are of dimension at least $n$, then, the perverse cohomology sheaves of $(i')^*\omega^0i_*M$ are obtained as successive extensions of objects of whose isomorphism classes lie in $\mc{L}(T')$ by induction.

If $T=T'$, then, by \cite[3.3]{plh}, we have $$(i')^*\omega^0i_*M=\omega^0 i^*\omega^0i_*M=\omega^0i^*i_*M=M$$ whose isomorphism class lies in $\mc{L}(T)$. 

Otherwise, if $T$ is of dimension $n-1$, we can always replace $T$ with an open subset, so that the closure of $T$ is disjoint from $T'$. Then, by \cite[3.3]{plh}, we have
$$(i')^*\omega^0i_*M=\omega^0(i')^*i_*M=0.$$ 

Assume now that $T$ is of dimension $n$ or more and that $T'$ is of dimension $n-1$. We can replace $T'$ with an open subset so that for any $M$ in $\mc{L}(T)$, the motive $(i')^*\omega^0i_*M$ is smooth Artin by \Cref{AM.six foncteurs constructibles} and \cite[3.5.1]{AM1}. We can also add all the isomorphism classes of the $\Hlh^n\left((i')^*\omega^0i_*M\right)$ to $\mc{L}(T')$ as there are finitely many of them. 

Hence, any admissible pair can be refined into a superadmissible one.

If $(\mc{S},\mc{L})$ is a superadmissible pair and if $i\colon U \rar V$ is an immersion between $\mc{S}$-subschemes of $S$, we claim that the functors $\omega^0i_*$ and $\omega^0i^!$ preserve $(\mc{S},\mc{L})$-constructibility. The proof is the same as \cite[2.1.13]{bbd} using as above the commutation of $\omega^0$ with the six functors described in \cite[3.3]{plh}.

Hence, if $(\mc{S},\mc{L})$ is a superadmissible pair, we can define a t-structure on $(\mc{S},\mc{L})$-constructible objects by gluing the perverse homotopy t-structures on the strata. By \Cref{AM.locality}, the positive (resp. negative) objects of this t-structure are the positive (resp. negative) objects of the perverse homotopy t-structure that are $(\mc{S},\mc{L})$-constructible. 

Thus, the perverse homotopy t-structure induces a t-structure on $(\mc{S},\mc{L})$-constructible objects. Therefore, the subcategory of $(\mc{S},\mc{L})$-constructible objects is stable under the truncation functors of the perverse homotopy t-structure. 

Any object $M$ of $\mc{DM}^A_{\et}(S,\Q)$ is $(\mc{S},\mc{L})$-constructible for some superadmissible pair $(\mc{S},\mc{L})$. Indeed, by \cite[3.5.1]{AM1}, there is an admissible pair $(\mc{S},\mc{L})$ such that $M$ is $(\mc{S},\mc{L})$-constructible. Such a pair is can always be refined into one which is superadmissible. 

Hence, $\mc{DM}^A_{\et}(S,\Q)$ is stable under the truncation functors of the perverse homotopy t-structure. Therefore, the perverse homotopy t-structure induces a t-structure on $\mc{DM}^A_{\et}(S,\Q)$.

Now, (2) follows from \Cref{glueing perverse}. Finally the third assertion follows from the construction of the perverse homotopy t-structure on constructible objects that we used to prove the first assertion, from the same construction in the $\ell$-adic setting of 
\cite[4.7.1]{aps} 
and from the fact that the functor $\omega^0$ commutes with the $\ell$-adic realization functor by 
\cite[4.6.3]{aps}.
\end{proof}

\begin{remark} Another possible proof of the first assertion is to show that the perverse homotopy t-structure can be constructed on constructible Artin motives by using Vaish's formalism of punctual gluing \cite[3]{vaish}.
\end{remark}

\subsection{Artin vanishing for \texorpdfstring{$f_!$}{f!}}
We now prove our first version of Artin's vanishing Theorem for Artin motives. It is an analog of \cite[XIV.3.1]{sga4}, \cite[XV.1.1.2]{travauxgabber} and \cite[4.1.1]{bbd} in the setting of Artin motives. 

\begin{theorem}\label{lefschetz affine 2Q} Let $(S,\delta)$ be an excellent scheme, let $f\colon X\rar S$ be a quasi-finite affine morphism. Assume that 
$X$ is nil-regular and that we are in one of the following cases 
\begin{enumerate}[label=(\alph*)]
    \item We have $\dim(S)\leqslant 2.$
    \item There is a prime number $p$ such that the scheme $S$ is of finite type over $\mb{F}_p$. 
\end{enumerate}

Then, the functor $$f_!\colon \DM^{smA}_{\et,c}(X,\Q)\rar \DM^A_{\et,c}(S,\Q)$$ is $t_{hp}$-exact. 
\end{theorem}
\begin{proof}
	This is the combination of \Cref{Ass_b} and \Cref{Ass_a}
\end{proof}
\begin{lemma}\label{Ass_b}
	\Cref{lefschetz affine 2Q} holds when Assumption (b) holds.
\end{lemma}
\begin{proof}
	Suppose that Assumption (b) holds. 
We have a commutative diagram
$$\begin{tikzcd} \DM^{smA}_{\et,c}(U,\Q)\ar[r,"j_!"]\ar[d,"\rho_{\ell}"] & \DM^A_{\et,c}(S,\Q)\ar[d,"\rho_{\ell}"] \\
\mc{D}^{smA}(U,\Q_\ell)\ar[r,"j_!"] & \mc{D}^A(S,\Q_\ell)
\end{tikzcd}$$
such that the vertical arrows are conservative and t-exact when the categories which appear in the bottom row are endowed with the perverse homotopy t-structure. Using \cite[1.1.6]{AM1}, it suffices to show that the functor $j_!$ is t-exact which follows from 
\cite[4.8.12]{aps}.
\end{proof}
We continue with the case when $\dim(S)\leqslant 1$. To that end, we study an analog of the nearby cycle functor (or more accurately residue functor) in our setting. Namely, let $C$ be an excellent $1$-dimensional scheme, let $i\colon F\rar C$ be a closed immersion of positive codimension and let $j\colon U\rar C$ be the complementary open immersion. Assuming that $U$ is nil-regular, we study the functor

$$\omega^0i^*j_*\colon \DM^{smA}_{\et,c}(U,\Q)\rar \DM^A_{\et}(F,\Q).$$

First, notice that the scheme $F$ is $0$-dimensional. Therefore, the scheme $F$ is the disjoint union of its points. Therefore, denoting by $$i_x\colon\{ x\}\rar F$$ the inclusion for $x\in F$, we get $$\omega^0i^*j_*=\bigoplus\limits_{x\in F} (i_x)_* i_x^*\omega^0 i^*j_*.$$
Now, by \cite[3.3]{plh}, the $2$-morphism $i_x^*\omega^0 \rar \omega^0 i_x^*$ is an equivalence. Therefore, the functor $\omega^0i^*j_*$ is equivalent to the functor $$\bigoplus\limits_{x\in F} (i_x)_* \omega^0 (i\circ i_x)^*j_*.$$ 
Hence, studying the case where $F$ is a point will yield a description in the general case. Assume from now on that $F=\{ x\}$. 

From \Cref{smooth Artin motives}, we can deduce that 
through the equivalence $\rho_!$ of \Cref{small et site}, the functor 
$\omega^0i^*j_*$ can be seen as a functor
$$\Shv_{\lis}(U,\Q)\rar \Shv_{\lis}(k(x)_{\et},\Q).$$

We have an equivalence $\rho_! i^*\overset{\sim}{\rar} i^*\rho_!$ and an exchange transformation $\rho_! j_* \rar j_* \rho_!$ by \cite[4.4.2]{em}.
This gives a transformation $$\rho_!i^*j_*\rar i^*j_*\rho_!$$ of functors $\Shv_{\lis}(U,\Q)\rar \DM_{\et}(F,\Q).$

Furthermore, if $M$ is a lisse étale sheaf over $U$, then, the motive $\rho_!i^*j_* M$ is Artin by \Cref{small et site}. Therefore, we have a transformation $${\Theta}\colon\rho_!i^*j_*\rar \omega^0 i^*j_*\rho_!.$$
of functors $\Shv_{\lis}(U,\Q)\rar \DM^A_{\et,c}(F,\Q).$


\begin{lemma}\label{nearby cycle} 
	Let $C$ be an excellent $1$-dimensional scheme, let $x$ be a closed point of $C$, let $i\colon\{ x\}\rar C$ be the closed immersion and $j\colon U\rar C$ be the complementary open immersion and let $p$ be the characteristic exponent of $k(x)$. 


Then, the transformation $\Theta$
is an equivalence. In particular, the square $$\begin{tikzcd}\Shv_{\lis}(U,\Q)\ar[r,"\rho_!"] \ar[d,swap,"i^*j_*"]& \DM_{\et,c}^A(U,\Q)\ar[d,"\omega^0i^*j_*"] \\
\Shv_{\lis}(\{ x\},\Q)\ar[r,"\rho_!"]& \DM_{\et,c}^A(\{ x\},\Q)
\end{tikzcd}$$
is commutative.
\end{lemma}
\begin{proof} 



Let $\nu\colon\widetilde{C}\rar C$ be the normalization of $C$. 
Write
$$\begin{tikzcd}
U_\mathrm{red} \arrow[d, swap,"\nu_U"] \arrow[r, "\gamma"] & \widetilde{C} \arrow[d, "\nu"] & F \arrow[d, "\nu_F"] \arrow[l, swap,"\iota"] \\
U \arrow[r, "j"]                               & C                              & \{ x\} \arrow[l, swap, "i"]                  
\end{tikzcd}$$
the commutative diagram such that both squares are cartesian. Notice that $U_\mathrm{red}$ is the reduced subscheme associated to $U$. Therefore, the functor  $(\nu_U)_*$ is an equivalence and commutes with the functor $\rho_!$. Hence, it suffices to show that $$\Theta (\nu_U)_*:\rho_!i^*j_*(\nu_U)_*\rar \omega^0 i^*j_*(\nu_U)_*\rho_!$$ is an equivalence. 
Therefore, it suffices to show that the map $\rho_!i^*\nu_*\gamma_*\rar \omega^0 i^*\nu_* \gamma_*\rho_!$ is an equivalence.

Using proper base change, this is equivalent to the fact that the map $$\rho_!(\nu_F)_*\iota^*\gamma_*\rar \omega^0 (\nu_F)_* \iota^*\gamma_*\rho_!$$
is an equivalence. Since the map induced by $\nu_F$ over the reduced schemes is finite, the functor $(\nu_F)_*$ commutes with the functor $\rho_!$ by \Cref{442em} below and with the functor $\omega^0$ by \cite[3.3]{plh}. Thus, it suffices to show that the map
$$\rho_!\iota^*\gamma_*\rar \omega^0 \iota^*\gamma_*\rho_!$$ is an equivalence. 
Moreover, since the scheme $F$ is $0$-dimensional, we can assume as before that $F$ is a point. Therefore, we can assume that the $1$-dimensional scheme $C$ is normal and therefore regular.

Since by \Cref{smooth Artin motives} the stable category $\Shv_{\lis}(U,\Q)$ is generated as a thick subcategory of itself by the sheaves of the form $\Q_U(V)=f_*\underline{\Q}$ for $f\colon V\rar U$ finite and étale and since the functors $\rho_!i^*j_*$ and $\omega^0 i^*j_*$ are exact and therefore compatible with finite limits, finite colimits and retracts, it suffices to show that the map $$\Theta(\Q_U(V))\colon\rho_!i^*j_*f_*\underline{\Q} \rar \omega^0 i^*j_*f_*\un_V$$ is an equivalence.
To that end, we introduce $\overline{f}\colon C_V\rar C$ the normalization of $C$ in $V$. 
Write 
$$\begin{tikzcd}
V \arrow[d, "f"] \arrow[r, "\gamma"] & C_V \arrow[d, "\overline{f}"] & Z \arrow[d, "p"] \arrow[l, swap, "\iota"] \\
U \arrow[r, "j"]                               & C                              & \{ x\} \arrow[l, swap, "i"]                  
\end{tikzcd}$$
the commutative diagram such that both squares are cartesian. The map $\Theta(\Q_U(V))$ is an equivalence if and only if the map
$$\rho_!i^*\overline{f}_*\gamma_*\underline{\Q} \rar \omega^0 i^*\overline{f}_*\gamma_*\un_V$$
is an equivalence. The morphism $p$ is finite 
and therefore the functor $p_*$ commutes with the functor $\rho_!$ by \Cref{442em} and with the functor $\omega^0$ by \cite[3.3]{plh}. Hence, using proper base change, it suffices to show that the map
$$\rho_!\iota^*\gamma_*\underline{\Q} \rar \omega^0 \iota^*\gamma_*\un_V$$
is an equivalence.

Hence, we can assume that $V=U$. Therefore, we need to prove that the map
$$\Theta(\underline{\Q})\colon\rho_! i^*j_*\underline{\Q}\rar \omega^0i^*j_*\un_U$$ is an equivalence. Using \cite[3.3]{plh}, the exchange transformation $$i^*\omega^0j_*\rar \omega^0i^*j_*$$ is an equivalence. Furthermore, by \cite[4.4.2]{em}, the exchange transformation $\rho_! i^*\rar i^*\rho_!$ is an equivalence. Therefore, it suffices to show that the exchange map
$$\rho_! j_* \underline{\Q} \rar \omega^0 j_* \un_U$$ is an equivalence which is a direct consequence of the two lemmas below.
\end{proof}
\begin{lemma}\label{az211} 
	Let $S$ be a regular scheme, and $j\colon U\rar S$ an open immersion with complement a simple normal crossing divisor. Then, the canonical map $$\un_S \rar \omega^0 j_*\un_U$$ in $\DM_{\et}^A(S,\Q)$ is an equivalence.
\end{lemma}
\begin{proof} This result generalizes \cite[2.11]{az}. Let $i\colon D\rar S$ be the closed immersion which is complementary to $j$. Using \Cref{AM.six foncteurs constructibles}, we have an exact triangle $$i_*\omega^0i^!\un_S\rar \un_S\rar \omega^0j_*\un_U.$$
It therefore suffices to show that $\omega^0i^!\un_S=0.$
Using \cite[5.7]{ddo}, we can reduce to the case when $D$ is a regular subscheme of codimension $c>0$. In this setting, we have $$\omega^0i^!\un_S=\omega^0(\un_D(-c))[-2c]$$
and the latter vanishes by \cite[3.9]{plh}.
\end{proof}
\begin{lemma}\label{az211 etale coh} 
	Let $S$ be a normal scheme, and let $j\colon U\rar S$ be an open immersion. Then, the canonical map $$\underline{\Q} \rar j_*\underline{\Q}$$ is an equivalence.
\end{lemma}
\begin{proof} The canonical map $\underline{\Q} \rar j_*\underline{\Q}$ is an equivalence if and only if for any geometric point $\xi$ of $S$, the canonical map $$\Q \rar \xi^*j_*\underline{\Q}$$ is an equivalence. If $n$ is an integer, \cite[VIII.5]{sga4} yields  $$\mathrm{H}^n(\xi^*j_*\underline{\Q})=\mathrm{H}^n_{\et}(\Spec(\mc{O}^{sh}_{S,x}) \times_S U,\Q)$$ where $\mc{O}^{sh}_{S,x}$ is the strict henselization of the local ring $\mc{O}_{S,x}$ of $S$ at $x$. 
Since the scheme $S$ is normal, so is the scheme $\Spec(\mc{O}^{sh}_{S,x}) \times_S U$. Moreover, this scheme is also connected. Therefore \cite[2.1]{deninger} yields the result.
\end{proof}

\begin{lemma}\label{442em} Let $R$ be a commutative ring. Recall the functor $\rho_!$ from \Cref{small et site}. Then, for any quasi-finite morphism $f$, there is a functorial isomorphism $$\rho_!f_!\rar f_!\rho_!.$$ 

\end{lemma}
\begin{proof} \cite[4.4.2]{em} yields the assertion in the case of étale morphisms. Therefore, using Zariski's Main Theorem \cite[18.12.13]{ega4}, it suffices to prove that for any finite morphism $f$, there is a canonical isomorphism $$\rho_!f_*\rar f_*\rho_!.$$

Assume first that $f\colon X\rar S$ is a closed immersion and let $j\colon U\rar S$ be the complementary open immersion. The localization triangles \eqref{AM.localization} in the fibered category of étale sheaves and in the fibered category of étale motives yield a morphism of exact triangles
$$\begin{tikzcd}
 \rho_!j_!j^*\ar[r]\ar[d] & \rho_! \ar[d,equals] \ar[r]& \rho_! f_* f^* \ar[d] \\
 j_!j^*\rho_!\ar[r] & \rho_! \ar[r]& f_* f^* \rho_!
\end{tikzcd}.$$

The left vertical arrow is an equivalence using the first assertion and the third assertion in the case of the map $j$. Therefore, we get an isomorphism $$\rho_! f_* f^*\rar f_* f^*\rho_!$$ and therefore, using the first assertion, we get an equivalence $$\rho_! f_* f^*\rar f_* \rho_!f^*.$$ Since the functor $f^*$ is essentially surjective, this yields the result for the map $f$.

If the map $f$ is purely inseparable the functor $f_*$ is an equivalence with inverse $f^*$ using \cite[6.3.16]{em} in the motivic setting and using the topological invariance of the small étale site \cite[04DY]{stacks} in the case of étale sheaves. Since the functors $\rho_!$ and $f^*$ commute, so do the functors $\rho_!$ and $f_*$.

Assume now that $f\colon X\rar S$ is a general finite morphism. We prove the claim by noetherian induction on $S$. We can assume that the scheme $X$ is reduced using the case of closed immersions. 

Let $\Gamma$ be the subscheme of $S$ made of its generic points. The pullback map $X\times_S \Gamma\rar \Gamma$ is finite over a finite disjoint union of spectra of fields and can therefore be written as a composition $$X\times_S\Gamma\xrightarrow{g}\Gamma'\xrightarrow{h}\Gamma$$ where $g$ is finite étale and $h$ is purely inseparable. Hence, by \cite[8.8.2]{ega4}, if $j\colon U\rar S$ is a small enough dense open immersion, the pullback map $f_U\colon X\times_S U\rar U$ can be written as a composition $$X\times_S U\xrightarrow{g_U}U'\xrightarrow{h_U}U$$ such that the pullback of $g_U$ (resp. $h_U$) to $\Gamma$ is $g$ (resp. $h$). 

Shrinking $U$ further, we can assume that the map $g_U$ is finite étale by \cite[8.10.5(x)]{ega4} and \cite[1.A.5]{ayo15} and that the map $h_U$ is purely inseparable by \cite[8.10.5(vii)]{ega4}. Hence, there is a dense open subscheme $U$ such that the map $f_U\colon X\times_S U\rar U$ is the composition of a finite étale map and a purely inseparable morphism. Therefore, the transformation $$\rho_! (f_U)_*\rar (f_U)_* \rho_!$$ is an equivalence. 

Let $i\colon Z\rar S$ be the reduced closed immersion which is complementary to $j$ and let $f_Z\colon X\times_S Z\rar Z$ be the pullback map. By noetherian induction, the transformation $$\rho_! (f_Z)_*\rar (f_Z)_* \rho_!$$ is an equivalence.

The localization triangles \eqref{AM.localization} in the fibered category of étale sheaves and in the fibered category of étale motives yield a morphism of exact triangles
$$\begin{tikzcd}
 \rho_!j_!j^*f_*\ar[r]\ar[d] & \rho_!f_* \ar[d] \ar[r]& \rho_! i_* i^*f_* \ar[d] \\
 j_!j^*f_*\rho_!\ar[r] & f_*\rho_! \ar[r]& i_* i^*f_* \rho_!
\end{tikzcd}.$$

The discussion above and proper base change ensure that the leftmost and rightmost vertical arrows are equivalences. Therefore, the middle vertical arrow is an equivalence which finishes the proof.
\end{proof}

\begin{lemma}\label{lefschetz affine 1Q} \Cref{lefschetz affine 2Q} holds when $\dim(S)\leqslant 1$.
\end{lemma}
\begin{proof} 
\Cref{AM.t-adj2} implies that when $g$ is a finite morphism, the functor $g_!$ is perverse homotopy t-exact . When the scheme $S$ is $0$-dimensional, the morphism $f$ is finite and the result holds.

Assume that the scheme $S$ is $1$-dimensional. Zariski's Main Theorem \cite[18.12.13]{ega4} provides a finite morphism $g$ and an open immersion $j\colon U\rar X$ such that $f=g\circ j$. Therefore it suffices to show that the functor $$j_!\colon \DM^{A}_{\et,c}(U,\Q)\rar \DM^A_{\et,c}(X,\Q)$$ is $t_{hp}$-exact. 
We already know by \Cref{AM.t-adj} that $j_!$ is right t-exact.

Hence, letting $M$ be a constructible Artin motive over $U$ which is $t_{hp}$-non-negative, it suffices to show that the Artin motive $j_!(M)$ is $t_{hp}$-non-negative.

Let $x$ be a point of $X$. Using \Cref{AM.locality}, we can replace $X$ with a neighborhood of $x$. Since the scheme $X$ is $1$-dimensional, we can therefore assume that the complement of $U$ is the closed subscheme $\{ x\}$ and replace $U$ with any neighborhood of the generic points. In particular, we can assume that $U$ is nil-regular and that $M$ is smooth Artin using the continuity property of \cite[1.4.2]{AM1}.

Let $i\colon \{ x\}\rar X$ be the canonical immersion. Applying the localization triangle \eqref{AM.localization} to the Artin motive $\omega^0 j_*(M)$ yields an exact triangle $$j_!(M) \rar \omega^0 j_*(M) \rar i_* i^* \omega^0 j_*(M).$$

By \Cref{AM.t-adj}, the motive $\omega^0 j_*(M)$ is $t_{hp}$-non-negative. Furthermore, \cite[3.3]{plh} ensures that $$i^* \omega^0 j_*(M)= \omega^0 i^* j_*(M).$$ Using \Cref{nearby cycle}, the Artin motive $\omega^0 i^* j_*(M)$ is in degree at least $\delta(X)$ with respect to the ordinary homotopy t-structure. Therefore, we get $$\omega^0 i^* j_*(M)\geqslant_{hp} -1.$$ 

This ensures that the Artin motive $j_!(M)$ is perverse homotopy t-non-negative.
\end{proof}

\begin{lemma}\label{Ass_a}
	\Cref{lefschetz affine 2Q} holds when $\dim(S)\leqslant 2$.
\end{lemma}

\begin{proof} We can assume that the scheme $S$ is connected. As in the proof of \Cref{lefschetz affine 1Q}, it suffices to show that if $j\colon U\rar S$ is an affine open immersion such that the scheme $U$ is nil-regular and if $M$ is a $t_{hp}$-non-negative smooth Artin motive over $U$, the Artin motive $j_!M$ is $t_{hp}$-non-negative.

We now suppose that Assumption (a) holds \textit{i.e.} that $\dim(S)\leqslant 2$. By \Cref{lefschetz affine 1Q}, we can assume that $\dim(S)=2$.
Take the convention that $\delta(S)=2$. 
Let $i\colon F\rar S$ be the reduced complementary closed immersion of $j$. As in the proof of \Cref{lefschetz affine 1Q}, it suffices to show that the functor $\omega^0i^*j_*$ has cohomological amplitude bounded below by $-1$. 

\textbf{Step 1:} In this step, we show that we can assume that the scheme $S$ is normal.

Let $\nu\colon \hat{S}\rar S$ be the normalization of $S$. Write 
$$\begin{tikzcd}
U_{\mathrm{red}} \arrow[d, swap, "\nu_U"] \arrow[r, "\gamma"] & \hat{S} \arrow[d, "\nu"] & \hat{F} \arrow[d, "\nu_F"] \arrow[l, swap, "\iota"] \\
U \arrow[r, "j"]                               & S                              & F \arrow[l, swap,"i"]                 
\end{tikzcd}$$
the commutative diagram such that both squares are cartesian. Notice that $U_{\mathrm{red}}$ is the reduced scheme associated to $U$ and therefore, the functor $(\nu_U)_*$ is a t-exact equivalence. Hence, it suffices to show that the functor $\omega^0 i^*j_*(\nu_U)_*$ has cohomological amplitude bounded below by $-1$. This functor is equivalent to $\omega^0 (\nu_F)_* \iota^*\gamma_*$. Since $\nu_F$ is a finite morphism, the functor $(\nu_F)_*$ is $t_{hp}$-exact by \Cref{AM.t-adj2} and commutes with $\omega^0$ by \cite[3.3]{plh}. Therefore, suffices to show that the functor $\omega^0\iota^*\gamma_*$ has cohomological amplitude bounded below by $-1$. Therefore, we can assume that the scheme $S$ is normal.

\textbf{Step 2:} In this step, we show that it suffices to show that $\omega^0i^*j_*\un_U\geqslant_{hp} 1$.

Using \Cref{boundedness}, the objects of $\DM^{smA}_{\et,c}(U,\Q)$ are bounded with respect to the perverse homotopy t-structure. Therefore, it suffices to show that the objects of the heart of the t-category $\DM^{smA}_{\et,c}(U,\Q)$ are sent to objects of degree at least $-1$ with respect to the perverse homotopy t-structure. Letting $\xi$ be a geometric point of $U$, the heart of $\DM^{smA}_{\et,c}(U,\Q)$ is equivalent to the category $\Rep^A(\pi_1^{\et}(U,\xi),\Q)$ of Artin representations of $\pi_1^{\et}(U,\Q)$ by \Cref{smooth Artin motives} and \cite[3.1.7]{AM1}. The latter a semi-simple category by Maschke's Theorem. Thus, every object of the heart is a retract of an object of the form $f_* \un_V[2]$ with $f\colon V\rar U$ finite and étale. Therefore, it suffice to show that $\omega^0 i^*j_*f_*\un_V\geqslant_{hp} 1$.

Let $\overline{f}\colon S_V\rar S$ be the normalization of $S$ in $V$. Write 
$$\begin{tikzcd}
V \arrow[d, swap, "f"] \arrow[r, "\gamma"] & S_V \arrow[d, "\overline{f}"] & F_V \arrow[d, "p"] \arrow[l, swap, "\iota"] \\
U \arrow[r, "j"]                               & S                           & F \arrow[l, swap,"i"]                 
\end{tikzcd}$$
the commutative diagram such that both squares are cartesian. By proper base change, we get $$\omega^0 i^*j_*f_*\un_V= \omega^0 p_* \iota^*\gamma_*\un_V.$$ 

Since the map $p$ is finite, the functor $p_*$ is $t_{hp}$-exact by \Cref{AM.t-adj2} and commutes with the functor $\omega^0$ by \cite[3.3]{plh}. Therefore, it suffices to show that $$\omega^0 \iota^*\gamma_*\un_V\geqslant_{hp}1.$$ Hence, we can assume that $V=U$.

\textbf{Step 3:} Since the scheme $S$ is normal its singular points are in codimension $2$.  Let $x$ be a point of $S$. Using \Cref{AM.locality}, we can replace $S$ with a neighborhood of $x$. We can therefore assume that the singular locus of $S$ is either empty or the single point $x$. Let $f\colon \widetilde{S}\rar S$ be a resolution of singularities of $S$ such that $E$ is a simple normal crossing divisor. Since the scheme $S$ is excellent, such a resolution exists by Lipman's Theorem on embedded resolution of singularities (see \cite[0BGP,0BIC,0ADX]{stacks}).  Write

$$\begin{tikzcd}
U \arrow[d, equal] \arrow[r, "\gamma"] & \widetilde{S} \arrow[d, "f"] & E \arrow[d, "p"] \arrow[l, "\iota"] \\
U \arrow[r, "j"]                               & S                           & F \arrow[l, swap,"i"]                 
\end{tikzcd}$$
the commutative diagram such that both squares are cartesian.

Then, by proper base change, we get $$\omega^0i^*j_*\un_U=\omega^0p_*\iota^*\gamma_*\un_U.$$ The latter is by \cite[3.3]{plh} equivalent to $\omega^0p_*\iota^*\omega^0\gamma_*\un_U$ which by \Cref{az211} is equivalent to $\omega^0p_*\un_{E}$. 
Therefore, it suffices to show that $\omega^0p_*\un_E\geqslant_{hp} 1$.

Write $E=\bigcup\limits_{i \in J} E_i$ with $J$ finite, where the $E_i$ are regular connected and of codimension $1$, the $E_{ij}=E_i\cap E_j$ are of codimension $2$ and regular if $i\neq j$ and the intersections of $3$ distinct $E_i$ are empty. By cdh-descent, we have an exact triangle
$$\omega^0p_*\un_E\rar \bigoplus\limits_{i \in J} \omega^0 (p_i)_*\un_{E_i}\rar \bigoplus\limits_{\{ i,j\} \subseteq J} (p_{ij})_* \un_{E_{ij}}.$$

Since we have $\delta(E_i)=1$ for any $i$ and since $\delta(E_{ij})=0$ if $i\neq j$, \Cref{delta(S),AM.t-adj} imply that the Artin motives $\omega^0 (p_i)_*\un_{E_i}$ and $ (p_{ij})_* \un_{E_{ij}}$ are $t_{hp}$-non-negative. Hence, we get $\omega^0p_*\un_E\geqslant_{hp} 0$ and that $\Hlh^0(\omega^0p_*\un_E)$ is the kernel of the map $$\bigoplus\limits_{i \in J} \Hlh^0(\omega^0 (p_i)_*\un_{E_i})\rar \bigoplus\limits_{\{ i,j\} \subseteq J} (p_{ij})_* \un_{E_{ij}}.$$
Hence, to finish the proof, it suffices to show that the kernel of the above map vanishes.

Let $$F_0=\{ y \in F \mid \dim(f^{-1}(y))>0\}.$$ The scheme $F_0$ is $0$-dimensional. Since we can work locally around $x$, we may assume that $F_0$ is either empty or the single point $x$ and that the image of any $E_{ij}$ through the map $p$ is $\{ x\}$.

Assume that the scheme $F_0$ is empty, then, the morphism $p$ is finite. Furthermore, for any index $i$ in $J$, the Artin motive $\un_{E_i}$ is in degree $1$ with respect to the perverse homotopy t-structure by \Cref{delta(S)}, therefore, the Artin motive $\Hlh^0(\omega^0 (p_i)_*\un_{E_i})$ vanishes. Thus, the motive $\Hlh^0(\omega^0p_*\un_E)$ vanishes.

Assume now that $F_0=\{ x\}$. Let $$I=\{i \in J\mid p(E_i)=\{ x\} \}$$ and let $k\colon \{ x \}\rar F$ be the inclusion. If an index $i$ belongs to $ J\setminus I$, the morphism $p_i$ is finite and therefore, the motive $\Hlh^0(\omega^0 (p_i)_*\un_{E_i})$ vanishes.

If an index $i$ belongs to $ I$, let $$E_i\rar G_i \overset{g_i}{\rar} \{ x\}$$ be the Stein factorization of $p_i$. \Cref{omega^0 corps} yields $$\Hlh^0(\omega^0 (p_i)_*\un_{E_i})=k_* (g_i)_*\un_{G_i}$$ and therefore $$\bigoplus\limits_{i \in I} \Hlh^0(\omega^0 (p_i)_*\un_{E_i})=k_*\bigoplus\limits_{i \in I} (g_i)_*\un_{G_i}.$$

Now, if $i\neq j$, the morphism $p_{ij}$ factors through $\{ x\}$, so that we have a morphism $g_{ij}\colon E_{ij}\rar \{ x\}$ with $p_{ij}=k\circ g_{ij}$.

We have 
$$\bigoplus\limits_{\{ i,j\} \subseteq J} (p_{ij})_* \un_{E_{ij}}
=k_*\bigoplus\limits_{\{ i,j\} \subseteq J} (g_{ij})_* \un_{E_{ij}}.$$



Hence, the Artin motive $\Hlh^0(\omega^0p_*\un_E)$ is the kernel of the map $$k_*\bigoplus\limits_{i \in I} (g_i)_*\un_{G_i} \rar k_*\bigoplus\limits_{\{ i,j\} \subseteq J} (g_{ij})_* \un_{E_{ij}}.$$

Since the functor $k_*$ is t-exact, we get $\Hlh^0(\omega^0p_*\un_E)=k_* P$ with $P$ the kernel of the map $$\bigoplus\limits_{i \in I} (g_i)_*\un_{G_i} \rar \bigoplus\limits_{\{ i,j\} \subseteq J} (g_{ij})_* \un_{E_{ij}}.$$

Let $\kappa=k(x)$ and let $\Gamma=G_{\kappa}$. Recall that if $g\colon X\rar \Spec(\kappa)$ is a finite morphism, the Artin motive $g_*\un_X$ corresponds to the Artin representation $\Q[X_{\overline{\kappa}}]$ of $\Gamma$ through $\rho_!$. 
Hence, we have  $$P=\rho_!P'$$ where $P'$ is the kernel of the morphism

$$\bigoplus\limits_{i \in I} \Q[(G_i)_{\overline{\kappa}}] \rar \bigoplus\limits_{\{ i,j\} \subseteq J} \Q[(E_{ij})_{\overline{\kappa}}]$$ of Artin representations of  $\Gamma$.

Now, we have  $P'=P_0\otimes_{\Z} \Q$ where $P_0$ is the kernel of the map $$\bigoplus\limits_{i \in I} \Z[(G_i)_{\overline{\kappa}}] \rar \bigoplus\limits_{\{ i,j\} \subseteq J} \Z[(E_{ij})_{\overline{\kappa}}].$$
Since the underlying $\Z$-module of $P_0$ is of finite type, 
it suffices to show that $P_0$ is of rank $0$.
Denote by $N$ the $\Z[\Gamma]$-module $\bigoplus\limits_{i \in I} \Z[(G_i)_{\overline{\kappa}}]$, by $Q$ the $\Z[\Gamma]$-module $\bigoplus\limits_{\{ i,j\} \subseteq J} \Z[(E_{ij})_{\overline{\kappa}}]$ and by $R$ the image of $N$ through the map $N\rar Q$. 
We have an exact sequence $$0\rar P_0\rar N \rar R \rar 0.$$

Let $\ell$ be a prime number, let $n$ be a positive integer and denote by $\Lambda$ the ring $\Z/\ell^n\Z$. We get an exact sequence 
$$\mathrm{Tor}^1_{\Z}(R,\Lambda)\rar P_0\otimes_\Z \Lambda \rar N \otimes_\Z \Lambda \rar R \otimes_\Z \Lambda\rar 0.$$

Assume that the map $N\otimes_\Z \Lambda\rar Q\otimes_\Z \Lambda$ is injective. Then, the induced map $N\otimes_\Z \Lambda\rar R\otimes_\Z \Lambda$ is also injective and we get a surjection $$\mathrm{Tor}^1_{\Z}(R,\Lambda)\rar P_0\otimes_\Z \Lambda.$$
Since $\mathrm{Tor}^1_{\Z}(R,\Lambda)$ is the $\ell^n$-torsion subgroup of $R$ and since the latter is of finite type, there is an integer $n_0$ such that if $n\geqslant n_0$, the group $\mathrm{Tor}^1_{\Z}(R,\Lambda)$ is of $\ell^{n_0}$-torsion. If $P_0$ is not of rank $0$, the group $P_0\otimes_\Z \Lambda$ is not of $\ell^{n_0}$-torsion for $n>n_0$. 

Hence, to show that $P_0$ is of rank $0$ and to finish the proof, it suffices to show that there is a prime number $\ell$ such that for any positive integer $n$, letting $\Lambda=\Z/\ell^n\Z$, the map $$\bigoplus\limits_{i \in I} \Lambda[(G_i)_{\overline{\kappa}}] \rar \bigoplus\limits_{\{ i,j\} \subseteq J} \Lambda[(E_{ij})_{\overline{\kappa}}]$$ has a trivial kernel.

\textbf{Step 4:} Let $\ell$ be a prime number which is invertible in $\kappa$. Replacing $S$ with a neighborhood of $x$ if needed, we can assume that $\ell$ is invertible on $S$. Let $n$ be a positive integer. Denote by $\Lambda$ the ring $\Z/\ell^n\Z$. In this step, we show that the map $$\bigoplus\limits_{i \in I} \Lambda[(G_i)_{\overline{\kappa}}] \rar \bigoplus\limits_{\{ i,j\} \subseteq J} \Lambda[(E_{ij})_{\overline{\kappa}}]$$ has trivial kernel.

The classical Artin Vanishing Theorem \cite[XV.1.1.2]{travauxgabber} on $\mc{D}^b_c(S,\Lambda)$ endowed with its perverse t-structure ensures that $${}^{p} \Hl^0(i^*j_*\underline{\Lambda})=0.$$

As before, the sheaf $i^*j_*\underline{\Lambda}$ is equivalent to the sheaf $p_* \iota^*\gamma_*\underline{\Lambda}$. Now, the localization triangle in $\mc{D}^b_c(S,\Lambda)$ yields an exact triangle
$$\iota_* \iota^! \underline{\Lambda} \rar \underline{\Lambda} \rar \gamma_*\underline{\Lambda}.$$
Applying $p_* \iota^*$, we get an exact triangle
$$p_* \iota^! \underline{\Lambda} \rar p_*\underline{\Lambda} \rar p_* \iota^*\gamma_*\underline{\Lambda}.$$
Furthermore, by cdh descent, we have an exact triangle
$$\bigoplus\limits_{\{ i,j\} \subseteq J} (p_{ij})_* \underline{\Lambda}(-2)[-4] \rar \bigoplus\limits_{i \in J} (p_i)_*\underline{\Lambda}(-1)[-2] \rar  p_* \iota^! \underline{\Lambda}.$$

Thus, the perverse sheaf ${}^{p} \Hl^k(p_* \iota^! \underline{\Lambda})$ vanishes if $k<2$. Therefore, the map $${}^{p} \Hl^0(p_*\underline{\Lambda}) \rar {}^{p} \Hl^0(p_* \iota^*\gamma_*\underline{\Lambda})$$ is an equivalence and therefore, the perverse sheaf ${}^{p} \Hl^0(p_*\underline{\Lambda})$ vanishes.

Finally, the same method as in the third step of our lemma shows that $${}^{p} \Hl^0(p_*\underline{\Lambda})=k_*P_\Lambda$$ where $P_\Lambda$ is the étale sheaf of $\kappa_{\et}$ which corresponds through Galois-Grothendieck theory to the kernel $P_\Lambda'$ of the map 

$$\bigoplus\limits_{i \in I} \Lambda[(G_i)_{\overline{\kappa}}] \rar \bigoplus\limits_{\{ i,j\} \subseteq J} \Lambda[(E_{ij})_{\overline{\kappa}}]$$ of $\Gamma$-modules. Therefore, the $\Gamma$-module $P_\Lambda'$ vanishes which finishes the proof.
\end{proof}

\begin{remark} Let $k$ be a field characteristic $0$, let $S$ be a $k$-scheme of finite type, let $f\colon X\rar S$ be a quasi-finite affine morphism. By using the Betti realization instead of the $\ell$-adic realization and by adapting the proof of \Cref{Ass_b} 
to the setting of Nori motives (the proof also works with mixed Hodge modules), it is possible to prove that the functor $$f_!\colon \DM^{smA}_{\et,c}(X,\Q)\rar \DM^A_{\et,c}(S,\Q)$$ is perverse homotopy t-exact (this is \cite[5.3.5]{IN}). 
\end{remark}

\begin{remark} \Cref{nearby cycle} is specific to curves. A reasonable equivalent to \Cref{nearby cycle} would then be that if $S$ is an excellent scheme, $i\colon F\rar S$ a closed immersion and $j\colon U\rar S$ is the complementary open immersion, with $F$ a Cartier divisor (so that the morphism $j$ is affine) and $U$ nil-regular, then, the $2$-morphism $$\Theta\colon \rho_!i^*j_* \rar \omega^0 i^*j_*\rho_!$$ is an equivalence.
However, this statement is not true. We now give a counterexample. 

Let $k$ be a field. Take $S=\Spec(k[x,y,z]/(x^2+y^2-z^2))$ to be the affine cone over the rational curve of degree $2$ in $\mb{P}^2$ given by the homogeneous equation $x^2+y^2=z^2$. The scheme $S$ has a single singular point $P=(0,0,0)$.

Let $F$ be the Cartier divisor of $S$ given by the equation $x=0$, so that $F$ is the union of the two lines $$L_1\colon y-z=0 \text{ and } L_2\colon y+z=0$$ which contain $P$ and therefore the complement $U$ of $F$ is regular.

The blow up $B_P(S)$ of $S$ at $P$ is a resolution of singularities of $S$ and the exceptional divisor $E$ above $F$ is the reunion of $\mb{P}^1_k$ and of the strict transform $L_i'$ of the line $L_i$ for $i\in \{ 1,2\}$. Each $L_i'$ crosses $\mb{P}^1_k$ at a single point and $L_1' $ and $L_2'$ do not cross each other.

We therefore have a commutative diagram

$$\begin{tikzcd}
U \arrow[d, equal] \arrow[r, "\gamma"] & \widetilde{S}=B_P(S) \arrow[d, "f"] & E=L'_1 \cup \mb{P}^1_k \cup L'_2 \arrow[d, "p"] \arrow[l, "\iota"] \\
U \arrow[r, "j"]                               & S                           & F=L_1 \cup L_2 \arrow[l, swap,"i"]                 
\end{tikzcd}$$
where both squares are cartesian.
Now, \Cref{az211 etale coh} yields $$\rho_!i^*j_*\underline{\Q}=\un_F.$$ 
On the other hand, we have $$\omega^0 i^*j_*\un_U=\omega^0i^*f_* \gamma_* \un_U=\omega^0p_* \iota^*\omega^0 \gamma_*\un_U$$ by \cite[3.3]{plh}. But \Cref{az211} ensures that $$\omega^0\gamma_*\un_U=\un_{\widetilde{S}}$$ and thus, we get $$\omega^0 i^*j_*\un_U=\omega^0p_* \un_E.$$ Finally, notice that $\omega^0p_* \un_E$ does not coincide with $\un_F$ since by \cite[3.9]{plh} $\omega^0i_x^!\un_F$ is trivial while $\omega^0i_x^!\omega^0p_* \un_E$ is equivalent to $\un_x$.
\end{remark}
\subsection{Artin vanishing for \texorpdfstring{$f_*$}{f*}}
We also have a $t_{hp}$-exactness result about the functor $\omega^0f_*$ when $f$ is a proper morphism. This result is a bit unusual (and very specific to Artin motives) as it requires $f$ to be proper and not affine. When $f\colon X \to S$ is smooth it can be easily explained by \cite[3.7]{plh} which states that 
\[\omega^0f_*\un_X\simeq g_*\un_Y\]
with 
\[X \to Y \xrightarrow{g} S\] 
the Stein factorization $f$.

\begin{proposition}\label{affine lef direct image} Let $(S,\delta)$ be a d-scheme allowing resolution of singularities by alterations, let $f\colon X\rar S$ be a proper morphism. Then, the functor $$\omega^0f_*\colon \DM^A_{\et,c}(X,\Q)\rar \DM^A_{\et,c}(S,\Q)$$ is right $t_{hp}$-exact. 
\end{proposition}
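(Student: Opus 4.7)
The plan is to reduce the statement to an analogous right $t_p$-exactness on the $\ell$-adic side by invoking the pointwise characterization of the perverse motivic t-structure. By \Cref{AM.locality}(3) combined with \Cref{main thm rationnel}, the hypothesis that $K$ is a number field ensures that the perverse homotopy t-structure on $\DM^A_{\et,c}(S,K)$ coincides with the perverse motivic t-structure; in particular, an object $N$ is $t_{hp}$-non-positive if and only if for every non-archimedian valuation $v$ on $K$ the reduced $v$-adic realization $\bar{\rho}_v(N)$ is perverse t-non-positive in $\mc{D}^b_c(S[1/\ell],K_v)$, where $\ell$ denotes the residue characteristic of $v$.

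I will then compute $\bar{\rho}_v(\omega^0 f_* M)$ explicitly. Proper base change applied to the proper morphism $f$ gives $\bar{\rho}_v \circ f_* \simeq f_* \circ \bar{\rho}_v$. Combined with the compatibility $\omega^0 \circ \bar{\rho}_v \simeq \bar{\rho}_v \circ \omega^0$ — which follows from the compatibility of the Artin truncation with the $\ell$-adic realization recalled in the proof of \Cref{main thm rationnel}(3), together with \Cref{omega^0 prt} ensuring that $\alpha_{j_\ell}$ is an equivalence for the open immersion $j_\ell \colon S[1/\ell] \hookrightarrow S$ — this yields a natural isomorphism $\bar{\rho}_v(\omega^0 f_* M) \simeq \omega^0 f_* \bar{\rho}_v(M)$. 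Since $M \leq_{hp} 0$ translates into $\bar{\rho}_v(M) \leq_p 0$, the theorem is reduced to the following $\ell$-adic statement: for every constructible Artin $\ell$-adic complex $N$ on $X[1/\ell]$ with $N \leq_p 0$, one has $\omega^0 f_* N \leq_p 0$.

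This $\ell$-adic statement I will prove by dévissage. The $\ell$-adic analog of \Cref{AM.stratification} combined with the localization triangle and noetherian induction on the dimension of the support of $N$ reduce to the case $N = (j_\alpha)_! L[\delta(U_\alpha)]$ where $j_\alpha \colon U_\alpha \hookrightarrow X[1/\ell]$ is the inclusion of a regular stratum and $L$ is a lisse sheaf; the hypothesis that $S$ allows resolution of singularities by alterations enters here in order to produce suitable regular refinements. Since $f$ is proper, $f_* N = (fj_\alpha)_! L[\delta(U_\alpha)]$; Nagata-compactifying $fj_\alpha$ and applying Gabber's Artin vanishing theorem on the affine open piece, together with the explicit computations of \Cref{omega^0 sur un corps} and \Cref{omega^0 corps} on the closed fibers, then controls the perverse degree of $f_* N$.

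The hard part will be the final identification: verifying that the higher-degree perverse cohomology sheaves of $f_* N$ consist entirely of non-Artin pieces — typically Tate twists of lisse sheaves — which are exactly what the Artin truncation $\omega^0$ kills. The computation of \Cref{omega^0 sur un corps} in the base-field case provides the model: there, the positive-degree cohomology sheaves of $f_* \un_X$ appear as Tate-twisted étale cohomology groups of the geometric fibers, and are therefore annihilated by $\omega^0$.
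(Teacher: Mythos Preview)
Your reduction to the $\ell$-adic side breaks down: the compatibility $\bar{\rho}_v \circ \omega^0 \simeq \omega^0 \circ \bar{\rho}_v$ you invoke is false in the generality you need, and consequently so is the $\ell$-adic statement you reduce to. Take $S = \Spec(\bar{k})$ with $\bar{k}$ algebraically closed (trivially allowing resolution by alterations), $X = \mb{P}^1_{\bar{k}}$, and $N = K_v[1]$, which is perverse on $\mb{P}^1$. Then $f_* N = K_v[1] \oplus K_v(-1)[-1]$. Since $G_{\bar{k}}$ is trivial, \emph{every} $\ell$-adic sheaf over $\bar{k}$ is Artin, so the $\ell$-adic truncation $\omega^0$ is the identity and $\omega^0 f_* N = f_* N$ sits in perverse degrees $\{-1,1\}$, contradicting your reduced statement. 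On the motivic side, by contrast, \Cref{omega^0 corps} (with number-field coefficients, only $\Hl^0$ survives) gives $\omega^0 f_* \un_{\mb{P}^1} = \un_{\bar{k}}$, so $\bar{\rho}_v(\omega^0 f_* \un_{\mb{P}^1}[1]) = K_v[1] \neq K_v[1] \oplus K_v[-1]$. The compatibility you cite from the proof of \Cref{main thm rationnel}(3) is only stated for schemes of finite type over $\mb{F}_p$, where weight considerations force Tate twists to have infinite monodromy; it simply does not extend to general base schemes. Your final paragraph, which hopes that ``higher-degree perverse cohomology sheaves of $f_* N$ consist entirely of non-Artin pieces'', is exactly what fails here.

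The paper's proof avoids this by staying entirely on the motivic side. Since $f$ is proper, $\omega^0 f_* = \omega^0 f_!$; both $f_!$ and (with rational coefficients) $\omega^0$ preserve small colimits, so it suffices to check the generators $h_X(Y)[\delta(Y)]$ for $Y$ finite over $X$. Absorbing $Y$ into $X$ reduces to showing $\omega^0 f_* \un_X[\delta(X)] \leq_{hp} 0$, which is the content of the separate structural result \Cref{lemme aff lef direct}: that motive lies in the smallest subcategory closed under finite colimits, extensions and retracts generated by $g_* \un_Y[\delta(Y)]$ for $g\colon Y\to S$ finite. The lemma is proved by noetherian induction on $X$ using cdh-descent, Stein factorization, and resolution of singularities by alterations --- this is precisely where the hypothesis on $S$ enters.
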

\begin{proof} Recall that since $f$ is proper, we have $$\omega^0f_*=\omega^0f_!.$$

Since the functor $f_!$ is a left adjoint functor, it respects small colimits. Furthermore, with rational coefficients, the functor $\omega^0$ also respects small colimits by \cite[3.5]{plh}. Hence, it suffices to show that the Artin motives of the form $\omega^0 f_* h_X(Y)[\delta(Y)]$ with $Y$ finite over $S$ are perverse homotopy t-non-positive. Hence, replacing $X$ with $Y$, it suffices to show that the Artin motive  $\omega^0 f_* \un_X[\delta(X)]$ is perverse homotopy t-non-positive.

This follows from \Cref{lemme aff lef direct}.
\end{proof}
\begin{lemma}\label{lemme aff lef direct} Let $(S,\delta)$ be a d-scheme. Let $f\colon X\rar S$ be a morphism of finite type. Then, the Artin motive $\omega^0 f_* \un_X[\delta(X)]$ belongs to the smallest subcategory of $\DM^A_{\et,c}(S,\Q)$ which is closed under finite colimits, extensions and retracts and contains the motives of the form $g_*\un_Y[\delta(Y)]$ with $g\colon Y\rar S$ finite.
\end{lemma}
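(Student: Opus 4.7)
The approach is to induct on $\dim(X)$. For the base case $\dim X=0$, Zariski's Main Theorem lets one factor $f$ as an open immersion into a finite $S$-scheme; since any dimension-zero open immersion becomes clopen after passing to reduced structures, $f$ itself is finite (using topological invariance of the small étale site to identify $\un_X$ with $\un_{X_{\mathrm{red}}}$), so $\omega^0 f_*\un_X=f_*\un_X$ is already of the required form.

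The inductive step splits into three reductions. First, I would reduce to the case $X$ regular by de Jong's alterations: pick a proper, surjective, generically finite étale Galois alteration $p\colon X'\to X$ with $X'$ regular, and let $V\subset X$ be the étale locus of $p$ and $Z=X\setminus V$ (of strictly smaller dimension). With $\Q$-coefficients the trace map realises $\un_V$ as a retract of $(p|_{p^{-1}V})_*\un_{p^{-1}V}$; combining the localization triangle $j_!\un_V\to \un_X\to i_*\un_Z$ with the induction hypothesis applied to $Z$, together with an analogous triangle on $X'$ separating $p^{-1}V$ from its complement, reduces the claim to $\omega^0(fp)_*\un_{X'}[\delta(X')]\in\mc{T}$ with $X'$ regular. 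Second, I would reduce to $f$ proper: Nagata-compactify $f=\bar f\circ\alpha$, and through a further alteration of the pair $(\bar X,\bar X\setminus X)$ arrange that $\bar X$ is regular and $\bar X\setminus X$ is a strict normal crossings divisor. Lemma~\ref{az211} then yields $\omega^0\alpha_*\un_X\simeq\un_{\bar X}$, and combined with the absorption identity $\omega^0\bar f_*\omega^0=\omega^0\bar f_*$ of Proposition~\ref{omega^0 prt}(3)(a) one obtains $\omega^0 f_*\un_X\simeq\omega^0\bar f_*\un_{\bar X}$, reducing the lemma to the case where $f$ is proper and $X$ is regular.

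The hard part is this last case: $f\colon X\to S$ proper with $X$ regular. My plan here is to apply a further de Jong alteration and to stratify $S$ into regular locally closed strata, using generic smoothness to arrange that over each stratum $X$ admits a finite simplicial SNC decomposition with smooth projective pieces. The simplicial descent technique of Corollary~\ref{omega^0 singulier}, whose proof only uses the exactness of $\omega^0$ and the simplicial descent of \cite[5.7]{ddo} and hence extends beyond a base field, then expresses $\omega^0 f_*\un_X$ as a finite limit; Proposition~\ref{omega^0 prt}(5)(b) applied stratum by stratum identifies each term as a finite direct image of a constant motive, since with $\Q$-coefficients the $\mu$-torsion contributions of Corollary~\ref{omega^0 corps} vanish and only the $\Hl^0$ Stein-factorization terms survive. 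Closure of $\mc{T}$ under finite colimits, extensions, and retracts then yields the conclusion; correctly combining the $S$-stratification with the $X$-SNC decomposition is the principal technical difficulty.
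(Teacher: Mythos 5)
Your reduction steps parallel the paper's (induction on $\dim X$, reduce to $X$ regular via alterations and retracts, reduce to $f$ proper), but the proper-and-regular case contains a genuine gap, and it is there that the lemma's content actually lies. You propose to stratify $S$ into regular locally closed strata, arrange over each stratum $T$ a simplicial SNC decomposition of (an alteration of) $X_T$ into smooth projective pieces, and then combine the simplicial descent of \Cref{omega^0 singulier} with the Stein-factorization formula \Cref{omega^0 prt}(5)(b). The problem is that this only computes $\omega^0(f_T)_*\un_{X_T}=\omega^0 i_T^* f_*\un_X$ over each stratum, and for a non-open stratum this is \emph{not} $i_T^*\left(\omega^0 f_*\un_X\right)$: the exchange $\alpha_{i_T}\colon i_T^*\omega^0\to\omega^0 i_T^*$ is invertible only under the hypothesis of \Cref{omega^0 prt}(2)(b)(iii), namely that the restriction to the open complement is already Artin, which $f_*\un_X$ certainly is not. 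So the stratum-wise answers do not reassemble into the global Artin truncation. You flag ``correctly combining the $S$-stratification with the $X$-SNC decomposition'' as the principal difficulty, but that combination is exactly the substance of the lemma and you offer no mechanism for it. (Note also that if $X$ is regular, it has no interesting closed SNC decomposition of its own; the simplicial method applies to a singular boundary, not to $X$ itself, which is another sign the plan is not quite engaging with the geometry.)

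What is missing is the paper's boundary vanishing. After reducing to $X$ regular, connected, $f$ proper, the paper spreads out a De~Jong alteration of the generic fiber to a single dense regular open $j\colon U\to S$, arranges by abstract blow-up that $X_F$ (for $F=S\setminus U$) is an SNC divisor in $X$, and proves $\omega^0 i^! f_*\un_X=0$ using \Cref{suite spectrale}: by absolute purity $i^!f_*\un_X$ is built from negatively Tate-twisted pushforwards, whose $\omega^0$ vanishes with coefficients in the number field $K$. This single vanishing collapses the colocalization triangle to $\omega^0 f_*\un_X\simeq\omega^0 j_*\omega^0(f_U)_*\un_{X_U}$, moving the computation entirely onto $U$; there a trace-map retract plus Stein factorization turns $\omega^0(f''_U)_*\un_{X''_U}$ into a genuine finite pushforward $g_*\un_Y$, and the new boundary term $\omega^0 i^*j_*g_*\un_Y$ is handled by noetherian induction. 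Without such a localize-then-vanish step your stratification has no way to account for how $\omega^0$ couples the strata. A secondary remark: your reduction to $f$ proper applies \Cref{az211} after altering the pair $(\bar X,\bar X\setminus X)$, but the alteration modifies $X$ as well, so you need a retract/cdh argument there too; this is fixable in the style of your first reduction but should be spelled out.
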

\begin{proof}
We now proceed by noetherian induction on $X$. If $\dim(X)=0$, the morphism $f$ is finite and the result follows from \Cref{AM.t-adj2}.

\textbf{Step 1:} Assume first that the map $f$ is proper.

 Using the induction hypothesis and cdh-descent, we can assume the scheme $X$ to be normal. Let $q\colon \widetilde{X}\rar X$ be a resolution of singularities by alterations and let $q'\colon X'\rar X$ be the relative normalization of $X$ in $\widetilde{X}\setminus B$ where $B$ is the inverse image of the singular locus of $X$. Using \cite[2.1.165]{ayo07}, the motive $\omega^0f_*q'_*\un_{X'}$ is a retract of the motive $\omega^0f_*q_*\un_{\widetilde{X}}$. Hence, using the induction hypothesis and cdh-descent, we can assume the scheme $X$ to be regular and connected.

Let $\eta=\Spec(\mb{K})$ be a generic point of $S$. Let $f_\mb{K}\colon X_\mb{K} \rar \mb{K}$ be the pullback of $f$ to $\mb{K}$. De Jong's resolution of singularities yield a proper alteration $X''_\mb{K}\rar X_{\mb{K}}\rar X_\mb{K}$ such that the map $X_{\mb{K}}\rar X_\mb{K}$ is proper and birational, the map $X''_\mb{K}\rar X_{\mb{K}}$ is a finite morphism which is generically the composition of an étale cover and of a purely inseparable morphism and such that the structural map $X''_\mb{K}\rar \mb{K}$ is the composition of a purely inseparable morphism and of a smooth morphism. Therefore by \cite[8]{ega4-partie3}, we have a dense open subset $U$ and a commutative diagram 
$$\begin{tikzcd} X''_U \ar[r,"q"] \ar[dr,swap,"{f''_U}"]& X'_U \ar[r,"p"]\ar[d,"{f'_U}"] & X_U \ar[dl,"f_U"] \\
&U&
\end{tikzcd}$$ 
such that the map $f_U$ is the pullback of the map $f$ to $U$, the map $q$ is proper and birational, the map $q$ is finite and is generically the composition of an étale Galois cover and of a purely inseparable morphism and the map $f''_U$ is the composition of a purely inseparable morphism and of a smooth morphism.


Shrinking $U$ if needed, we can assume that it is regular. 
Denote by $j\colon U\rar S$ the immersion, by $i\colon F\rar S$ its reduced closed complement and by $f_F\colon X_F\rar F$ the pullback of $f$ along the map $i$. By cdh-descent and induction, we can assume that the scheme $X_F$ is simple normal crossing in $X$ by replacing $X$ with some abstract blow-up.

By localization, we have an exact triangle 
$$i_*i^!f_*\un_X\rar f_* \un_X \rar j_* (f_U)_* \un_{X_U}.$$
Using \cite[3.9]{plh}, a quick induction on the number of branches of the normal crossing divisor $X_F$ inside the regular scheme $X$ combined with the absolute purity property shows that the motive $\omega^0 i^!f_*\un_X$ vanishes. 
This yields an equivalence $$\omega^0f_* \un_X = \omega^0j_* (f_U)_* \un_{X_U}$$

By \cite[3.3]{plh}, we therefore get that $$\omega^0f_* \un_X = \omega^0j_* \omega^0(f_U)_* \un_{X_U}.$$

Let $Z$ be the complement of the open subset of $X_U$ over which the map $p$ is an isomorphism and let $Z'$ be its pullback along the map $p$. By cdh-descent, and by using the induction hypothesis on the structural maps $Z\rar U\rar S$ and $Z'\rar U\rar S$, it suffices to show that $$\omega^0j_* \omega^0(f_U')_* \un_{X_U'}[\delta(X)]\leqslant_{hp} 0.$$

Furthermore, \cite[2.1.165]{ayo07} implies that the motive $\omega^0(f_U')_* \un_{X'_U}$ is a retract of the motive $\omega^0(f_U'')_* \un_{X''_U}$ and therefore, it suffices to show that $$\omega^0j_* \omega^0(f_U'')_* \un_{X_U''}[\delta(X)]\leqslant_{hp} 0.$$

Since the map $f_U''$ is the composition of a smooth and proper morphism and of a purely inseparable morphism, letting $$X_U''\rar Y\overset{g}{\rar} U$$ be the Stein factorization of $f_U''$ yields by \cite[3.3]{plh}(5) that $$\omega^0(f_U'')_* \un_{X_U''}=g_*\un_Y.$$

On the other hand, the localization property also gives an exact triangle $$j_!g_*\un_Y\rar \omega^0j_* g_*\un_Y \rar i_* \omega^0 i^*j_* g_*\un_Y$$
and therefore, using \Cref{AM.t-adj}, it suffices to show that $$\omega^0 i^*j_* g_*\un_Y[\delta(X)-1]\leqslant_{hp} 0.$$

Let $Y_1$ be a relative normalization of $S$ in $Y$ and let $\overline{Y}$ be a resolution of singularities by alterations of $Y_1$. Let $k\colon \overline{Y}\rar S$ be the structural map. The scheme $\overline{Y}$ is regular and there is a map $e\colon Y'\rar Y$ which is the composition of a finite étale cover and of a purely inseparable morphism such that we have a commutative diagram
$$\begin{tikzcd}
Y' \arrow[d, swap, "g\circ e"] \arrow[r, "\gamma"] & \overline{Y} \arrow[d, "k"] & Y_F \arrow[d, "g_F"] \arrow[l, swap, "\iota"] \\
U \arrow[r, "j"]                               & S                           & F \arrow[l, swap, "i"]                  
\end{tikzcd}$$ 
which is made of cartesian squares.

The Artin motive $\omega^0 i^*j_* g_*\un_Y$ is by \cite[2.1.165]{ayo07} a retract of the motive $\omega^0 i^*j_* (g\circ e)_*\un_{Y'}$ and by proper base change, we get
$$\omega^0 i^*j_* (g\circ e)_*\un_{Y'}=\omega^0(g_F)_* \omega^0 \iota^*\gamma_*\un_Y.$$

But using \Cref{az211} and \cite[3.3]{plh}, we have 
$$\omega^0 \iota^*\gamma_*\un_Y=\iota^*\omega^0\gamma_*\un_Y=\un_{Y_F}.$$ 

Therefore, it suffices to show that $$\omega^0 (g_F)_*\un_{Y_F}[\delta(X)-1]\leqslant_{hp} 0.$$ 

Since $$\dim(Y_F)\leqslant\dim(Y)-1\leqslant \dim(X)-1,$$ the result follows by induction.

\textbf{Step 2:} Assume now that the map $f$ is a general morphism.

Using Nagata's Theorem \cite[0F41]{stacks}, we can write $$f=j\circ \overline{f}$$ where $\overline{f}$ is proper and $j$ is an open immersion. Therefore, we can assume that $f=j\colon U\rar S$ is an open immersion.

We want to show that $$\omega^0 j_*\un_U[\delta(U)]\geqslant_{hp} 0.$$
Let now $p\colon Y\rar S$ be a resolution of singularities by alterations. We can proceed as before to show that the motive $\omega^0 j_* \un_U$ is a retract of the motive $\omega^0p_* \un_{Y}$. The result then follows from the case of proper morphisms.
\end{proof}
\subsection{Perverse Artin motives}
\begin{definition} Let $(S,\delta)$ be an excellent d-scheme. 
	We define the \emph{abelian category of perverse Artin étale motives over $S$ with rational coefficients}, as the heart of the perverse homotopy t-structure on $\mc{DM}^A_{\et,c}(S,\Q)$. We denote by $\mathrm{M}^A_{\mathrm{perv}}(S,\Q)$ this category.
\end{definition}

Our understanding of this category will rely on a third and last version of the Artin vanishing Theorem:
\begin{lemma}\label{Affine Lefschetz direct image} Let $(S,\delta)$ be a d-scheme allowing resolution of singularities by alteration, let $f\colon X\rar S$ be a quasi-finite morphism of schemes. Assume that $X$ is nil-regular.
Then, the functor $$\omega^0 f_*\colon \DM^{smA}_{\et,c}(X,\Q)\rar \DM^A_{\et,c}(S,\Q)$$ is t-exact when both sides are endowed with the perverse homotopy t-structure.
\end{lemma}
\begin{proof} 

Using \Cref{boundedness}, the objects of $\DM^{smA}_{\et,c}(X,\Q)$ are bounded with respect to the perverse homotopy t-structure. Therefore, it suffices to show that the objects of the heart of the t-category $\DM^{smA}_{\et,c}(X,\Q)$ are sent to objects of degree $0$ with respect to the perverse homotopy t-structure.  Since the latter is a semi-simple category by Maschke's Theorem, every object of the heart is a retract of an object of the form $g_* \un_V[\delta(X)]$ with $g\colon V\rar X$ finite and étale. Therefore, it suffice to show that $$\omega^0 f_* g_*\un_V[\delta(V)]\geqslant_{hp} 0$$ which follows from \Cref{lemme aff lef direct}.
\end{proof}

We now show that the abelian category of perverse Artin motives with rational coefficients has similar features as the category of perverse sheaves.

First, we recall the definition of the intermediate extension functor given by the gluing formalism (see \cite[1.4.22]{bbd}):
\begin{definition}
 Let $j\colon U\rar S$ be an open immersion. The \emph{intermediate extension functor} $j_{!*}^A$ is the functor which to an object $M$ in $\mathrm{M}^A_{\mathrm{perv}}(S,\Q)$ associates the image of the morphism $$\Hlh^0 j_! M\rar \Hlh^0 \omega^0 j_* M.$$
\end{definition}

Using \cite[1.4.23]{bbd}, we have the following description of this functor (compare with \cite[2.1.9]{bbd}):
\begin{proposition}\label{AM.carext} Let $i\colon F\rar S$ be a closed immersion, let $j\colon U\rar S$ be the open complementary immersion and let $M$ be in $\mathrm{M}^A_{\mathrm{perv}}(U,\Q)$. Then, the Artin motive $j_{!*}^A (M)$ is the only extension $P$ of $M$ to $S$ such that $i^* P<_{hp}0$ and $\omega^0 i^! P>_{hp}0$.
\end{proposition}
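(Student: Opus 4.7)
The plan is to deduce this characterization directly from the gluing formalism of \cite[1.4.22--1.4.24]{bbd} applied to the situation described in \Cref{glueing perverse}: the perverse homotopy t-structure on $\DM^A_{\et,c}(S,K)$ is glued from those on $\DM^A_{\et,c}(U,K)$ and $\DM^A_{\et,c}(F,K)$ via the functors $(i^*, i_*, \omega^0 i^!)$ and $(j_!, j^*, \omega^0 j_*)$, so I only need to translate the general BBD characterization into this Artin-motivic setting.

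First I record the $t_{hp}$-exactness properties. By \Cref{AM.t-adj,AM.t-adj2}, the functors $i_*=i_!$ and $j^*=j^!$ are $t_{hp}$-exact (since $i$ is finite and $j$ is étale), while $i^*$ and $j_!$ are right $t_{hp}$-exact and $\omega^0 i^!$ and $\omega^0 j_*$ are left $t_{hp}$-exact. Consequently, for any perverse Artin motive $P$ one has automatically $i^* P\leqslant_{hp} 0$ and $\omega^0 i^! P\geqslant_{hp} 0$, so the conditions in the statement reduce to the vanishing of $\Hlh^0 i^* P$ and of $\Hlh^0 \omega^0 i^! P$. Given any extension $P$ of $M$, the adjunctions supply canonical morphisms $\alpha\colon \Hlh^0 j_! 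M\to P$ and $\beta\colon P\to \Hlh^0 \omega^0 j_* M$ whose composition is the canonical map whose image defines $j_{!*}^A M$.

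Next I translate the conditions on $P$ into conditions on $\alpha$ and $\beta$ using the two localization triangles. Applying the triangle \eqref{AM.localization} to $P$ (with $j^* P = M$) and taking the long exact sequence of perverse homotopy cohomology produces
\[ \Hlh^0 j_! M \xrightarrow{\alpha} P \to i_* \Hlh^0 i^* P \to \Hlh^1 j_! M = 0, \]
the last vanishing by right $t_{hp}$-exactness of $j_!$; hence $\Hlh^0 i^* P = 0$ if and only if $\alpha$ is surjective. Dually, the Artin co-localization triangle of \Cref{AM.six foncteurs constructibles} yields
\[ 0 = \Hlh^{-1}\omega^0 j_* M \to i_* \Hlh^0 \omega^0 i^! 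P \to P \xrightarrow{\beta} \Hlh^0 \omega^0 j_* M, \]
the first vanishing by left $t_{hp}$-exactness of $\omega^0 j_*$; hence $\Hlh^0 \omega^0 i^! P = 0$ if and only if $\beta$ is injective.

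Combining these, $P$ satisfies both conditions precisely when $\alpha$ is surjective and $\beta$ is injective, which identifies $P$ with the image of $\beta\alpha$ in the abelian category $\mathrm{M}^A_{\mathrm{perv}}(S,K)$; this image is $j_{!*}^A M$ by definition. There is no serious obstacle here beyond what has already been established: the argument is the formal BBD proof transcribed into our setting, with $j_*$ replaced by $\omega^0 j_*$ and $i^!$ by $\omega^0 i^!$ throughout, and the nontrivial work of setting up the gluing of t-structures (\Cref{glueing perverse}) and the relevant $t_{hp}$-adjunctions (\Cref{AM.t-adj,AM.t-adj2}) has already been carried out.
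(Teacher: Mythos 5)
Your proof is correct and takes the same route as the paper: the paper establishes the gluing situation in \Cref{glueing perverse} and \Cref{main thm rationnel} and then simply cites the abstract BBD characterization of intermediate extensions in a glued t-structure, which is precisely the argument you carry out in detail, with $\omega^0 j_*$ and $\omega^0 i^!$ playing the roles of $j_*$ and $i^!$.
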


The following result is similar to \cite[4.3.1]{bbd}.

\begin{proposition}\label{simple objects} Let $(S,\delta)$ be an excellent d-scheme. 
\begin{enumerate}
\item The abelian category of perverse Artin motives with rational coefficients on $S$ is artinian and noetherian: every object is of finite length.
\item If $j\colon V\hookrightarrow S$ is the inclusion of a regular connected subscheme and if $L$ is a simple object of $\Loc_V(K)$, then the perverse Artin motive $j_{!*}^A(\rho_!L[\delta(V)])$ is simple. Every simple perverse Artin motive is obtained this way.
\end{enumerate}
\end{proposition}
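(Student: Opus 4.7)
The plan is to adapt the classical argument of \cite[4.3.1]{bbd} to our setting, combining the gluing formalism of \Cref{main thm rationnel}(2), the characterization of the intermediate extension in \Cref{AM.carext}, and the description of the heart over a regular connected base given by \Cref{AM.locality}(2)(a), which identifies perverse smooth Artin motives with $\Loc_V(K)$ shifted by $\delta(V)$. The key point is that $\Loc_V(K)$ corresponds under a fiber functor to $\Rep^A(\pi_1^{\et}(V,\xi),K)$, and since $K$ is a field of characteristic zero, this category is semisimple by Maschke's theorem.

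For part (2), I would first show that $j_{!*}^A(\rho_!L[\delta(V)])$ is simple when $L$ is simple. Writing $\bar{j}\colon \overline{V}\hookrightarrow S$ and $j'\colon V\hookrightarrow \overline{V}$ with $j=\bar{j}\circ j'$, and $i\colon \overline{V}\setminus V\hookrightarrow \overline{V}$ for the complementary closed immersion, any nonzero sub-object $N$ of $M:=j_{!*}^A(\rho_!L[\delta(V)])$ either has nonzero restriction to $V$, in which case $(j')^*N$ is a nonzero sub-object of the simple $\rho_!L[\delta(V)]$ hence equals it, or is entirely supported on $\overline{V}\setminus V$; the latter is ruled out by \Cref{AM.carext} since the conditions $i^*M <_{hp} 0$ and $\omega^0 i^! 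M >_{hp} 0$ prevent nonzero perverse sub- or quotient objects supported on the closed complement. A dual argument on the quotient $M/N$ yields $N=M$. For the converse, given a simple perverse Artin motive $M$, apply \Cref{AM.stratification} to find a dense open $j\colon V\hookrightarrow \mathrm{supp}(M)$ that is regular and connected and on which $M|_V$ is a smooth perverse Artin motive; then by \Cref{AM.locality}(2)(a), $M|_V=\rho_!N[\delta(V)]$ for some $N\in\Loc_V(K)$, and Maschke decomposes $N=\bigoplus L_i^{n_i}$ with $L_i$ simple; if the decomposition had more than one isotypic component the corresponding intermediate extensions would give proper nonzero sub-objects of $M$, contradicting simplicity, so $N$ itself is simple and $M\simeq j_{!*}^A(\rho_!N[\delta(V)])$ as the unique perverse extension satisfying the conditions of \Cref{AM.carext}.

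For part (1), I would proceed by Noetherian induction on $S$. Given $M\in\mathrm{M}^A_{\mathrm{perv}}(S,K)$, pick via \Cref{AM.stratification} a dense open $j\colon U\hookrightarrow S$ with $U$ regular and $M|_U$ smooth Artin, say $M|_U=\rho_!N[\delta(U)]$ with $N\in\Loc_U(K)$. By Maschke, $N$ has finite length in $\Loc_U(K)$, so $M|_U$ has finite length in $\mathrm{M}^{smA}_{\mathrm{perv}}(U,K)$. Applying the perverse-homotopy cohomology long exact sequence to the localization triangle attached to $i\colon F=S\setminus U\hookrightarrow S$ and using \Cref{main thm rationnel}(2), one obtains an exact sequence in $\mathrm{M}^A_{\mathrm{perv}}(S,K)$ expressing $M$ in terms of $\Hlh^0 j_!(M|_U)$, the intermediate extension, and pieces of the form $i_*(\,\cdot\,)$ with the inner terms lying in $\mathrm{M}^A_{\mathrm{perv}}(F,K)$. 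Since $\dim F<\dim S$ (and $F$ is excellent), the induction hypothesis gives finite length for these pieces, whence $M$ itself is of finite length.

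The main obstacle will be the length-finiteness in part (1): one must carefully identify the kernel and cokernel of the natural morphism $\Hlh^0 j_!(j^*M)\to M \to \Hlh^0 \omega^0 j_*(j^*M)$ as perverse Artin motives supported on $F$ so that the Noetherian induction applies cleanly. This requires the control provided by \Cref{main thm rationnel}(2) (which gives the full gluing formalism of \cite[1.4.3]{bbd}) together with the observation that $j_{!*}^A(j^*M)$ and $M$ have the same restriction to $U$, reducing their difference to an object supported on $F$. Once this is in place, the induction runs exactly as in the BBD argument, and the simplicity statement of part (2) identifies the composition factors with the claimed intermediate extensions.
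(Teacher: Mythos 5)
Your overall strategy is the right one — adapt the BBD $4.3.1$ argument using the gluing formalism, the characterization of $j_{!*}^A$ in \Cref{AM.carext}, and noetherian induction on the base — and this is indeed what the paper does. The proof in the paper is literally one sentence: ``same as \cite[4.3.1]{bbd} after replacing \cite[4.3.2, 4.3.3]{bbd} with the following lemmas,'' and it then supplies two lemmas: a shrinking-invariance statement ($\rho_!L[\delta(V)]=j_{!*}^Aj^*(\rho_!L[\delta(V)])$ for any open $j\colon U\hookrightarrow V$) and the simplicity of $\rho_!L[\delta(V)]$ in the perverse heart. The bulk of the work is in these two lemmas, and this is precisely where your proposal has a gap.

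In your forward direction for part (2) you write ``$(j')^*N$ is a nonzero sub-object of the simple $\rho_!L[\delta(V)]$ hence equals it,'' but the fiber-functor identification from \Cref{AM.locality}(2)(a) and Maschke only give you simplicity of $\rho_!L[\delta(V)]$ in the \emph{smooth} heart $\mathrm{M}^{smA}_{\mathrm{perv}}(V,K)$. A sub-object $(j')^*N$ in the full heart $\mathrm{M}^A_{\mathrm{perv}}(V,K)$ need not be smooth Artin; a priori it could be a perverse Artin motive supported on a proper closed subset of $V$, which Maschke cannot see. Ruling this out is exactly the shrinking lemma: one must show that for any dense open $j''\colon U'\hookrightarrow V$ one has $\rho_!L[\delta(V)]=(j'')_{!*}^A((j'')^*\rho_!L[\delta(V)])$, which by \Cref{AM.carext} amounts to verifying $i^*(\rho_!L[\delta(V)])<_{hp}0$ (easy) and $\omega^0 i^!(\rho_!L[\delta(V)])>_{hp}0$ for $i$ the complementary closed immersion. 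That second inequality is not formal: it is proved pointwise via absolute purity on the regular scheme $V$ together with the computation of $\omega^0 i_x^!$ of lisse motives from \Cref{omega^0 corps}, which shows the relevant negatively-twisted contributions vanish with rational coefficients. Without this computation the case ``$N|_V$ supported on a proper closed subset of $V$'' is not excluded, so your simplicity argument is incomplete; once you supply the shrinking lemma, the rest of your sketch (the support argument for $M$, the dual argument on quotients, the noetherian induction for part (1)) does line up with the paper's intended proof.
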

\begin{proof} The proof is the same as in \cite[4.3.1]{bbd} replacing \cite[4.3.2, 4.3.3]{bbd} with the following lemmas.
\end{proof}
\begin{lemma} Let $V$ be a regular connected scheme. Let $L$ be a simple object of $\Loc_V(K)$. Then, letting $F=\rho_! L[\delta(V)]$, for any open immersion $j\colon U\hookrightarrow V$, we have $$F=j_{!*}^A j^*F.$$
\end{lemma}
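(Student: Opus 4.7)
The plan is to verify that $F$ itself satisfies the uniqueness conditions characterizing $j_{!*}^A(j^*F)$ given by \Cref{AM.carext}, namely that $F$ lies in $\mathrm{M}^A_{\mathrm{perv}}(V,K)$, restricts to $j^*F$ along $j$, and satisfies $i^*F <_{hp} 0$ together with $\omega^0 i^!F >_{hp} 0$, where $i\colon Z \hookrightarrow V$ denotes the reduced closed complement of $j$. We may assume $U \neq V$, in which case every $x \in Z$ satisfies $c_x := \delta(V) - \delta(x) \geqslant 1$ because $V$ is regular and connected. That $F$ is perverse follows from \Cref{AM.locality}(2)(a), and that $j^*F$ is perverse follows from the $t_{hp}$-exactness of $j^*$ for the étale morphism $j$ (\Cref{AM.t-adj2}(1)).

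For the first condition I would apply the pointwise criterion \Cref{AM.locality}(1)(b): since $\rho_!$ commutes with $i_x^*$ (\Cref{442em}) and $i_x^*L$ lies in ordinary degree zero, the Artin motive $i_x^*F = \rho_!(i_x^*L)[\delta(V)]$ sits in ordinary degree $-\delta(V)$, and the inequality $-\delta(V) \leqslant -\delta(x) - 1$ follows from $c_x \geqslant 1$.

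For the second condition, I would use absolute purity on the regular scheme $V$ to write $i_x^!(\rho_!L) = \rho_!(i_x^*L)(-c_x)[-2c_x]$, so that $i_x^!F = \rho_!(i_x^*L)(-c_x)[\delta(V) - 2c_x]$. Because $K$ has characteristic zero and $i_x^*L$ factors through a finite quotient $\pi$ of $G_{k(x)}$, Maschke's theorem exhibits $i_x^*L$ as a direct summand of the regular representation $K[\pi]$; passing through $\rho_!$, the motive $\rho_!(i_x^*L)$ is then a retract of $p_*\un_Y$ for some finite étale cover $p\colon Y \to \Spec(k(x))$. Applying the exact functor $\omega^0$ and invoking \Cref{omega^0 corps}(3) (using $c_x \geqslant 1$) shows that $\omega^0(\rho_!(i_x^*L)(-c_x))$ is concentrated in ordinary degree~$1$, so that $\omega^0 i_x^!F$ sits in ordinary degree $1 + 2c_x - \delta(V) = 1 - \delta(x) + c_x \geqslant 2 - \delta(x)$. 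Via \Cref{AM.locality}(1)(a), this yields $\omega^0 i^!F \geqslant_{hp} 1$, i.e., $\omega^0 i^!F >_{hp} 0$, and \Cref{AM.carext} concludes the proof.

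The main technical input is the absolute purity isomorphism $i_x^!(\rho_!L) \simeq \rho_!(i_x^*L)(-c_x)[-2c_x]$ for a lisse Artin motive on a regular base, which I would deduce from absolute purity for $\un_V$ (a standard feature of the six-functor formalism on étale motives) together with the dualizability of $\rho_!L$, extending to non-closed points of $V$ via the strong continuity property (\Cref{continuity0}).
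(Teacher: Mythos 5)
Your proposal follows the same strategy as the paper's proof: verify that $F$ itself satisfies the uniqueness characterization of \Cref{AM.carext} by checking the two pointwise conditions from \Cref{AM.locality}. The verification of $i^*F <_{hp} 0$ is exactly as in the paper, and the structure of the argument for the $\omega^0 i^!$ condition is the same.

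However, there is a small conceptual slip in the second half. You claim that $\omega^0\bigl(\rho_!(i_x^*L)(-c_x)\bigr)$ is \emph{concentrated in ordinary degree $1$}, obtained by applying \Cref{omega^0 corps}(3). But in the present lemma the coefficient ring is the number field $K$ — a $\Q$-algebra — and for such rings $\mu^{n-1}(X,K) = \Hl^{n-1}_{\et}(X_{\overline{k}}, K\otimes_\Z \Q/\Z) = 0$ identically (since $K\otimes_\Z \Q/\Z = 0$). Thus the motive $\omega^0\bigl(\rho_!(i_x^*L)(-c_x)\bigr)$ is not merely in degree $1$, it is \emph{zero}; this is exactly what the paper observes by citing absolute purity together with \Cref{omega^0 corps} (equivalently, \cite[3.9]{plh}). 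Your conclusion that $\omega^0 i_x^! F \geqslant 2 - \delta(x)$ in the ordinary t-structure remains vacuously true, so the argument still closes, but the intermediate description is the formula valid for integral (non-$\Q$-algebra) coefficients and suggests a nonzero torsion contribution that simply isn't there. The cleanest phrasing is the one in the paper: for $K$ a number field and $c_x \geqslant 1$, absolute purity gives $\omega^0 i_x^! F = 0$ outright, from which $\omega^0 i^! F >_{hp} 0$ follows by \Cref{AM.locality}(1)(a).
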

\begin{proof} Let $i\colon F\rar V$ be the reduced complementary closed immersion to $j$. Then, the motive $i^*F$ belongs to $\rho_!\Loc_F(R)[\delta(X)]$ at is therefore in degree at most $\delta(F)-\delta(X)<0$ with respect to the perverse homotopy t-structure.

Now, if $x$ is a point of $F$, we have $$\omega^0i_x^! F=0$$ by absolute purity and \cite[3.9]{plh}. Therefore, by \Cref{AM.locality}, we get $$\omega^0i^!F>_{hp}0.$$
Therefore, the motive $F$ is an extension of $j^*F$ such that $i^* F<_{hp}0$ and $\omega^0 i^! F>_{hp}0$ and the proposition follows from \Cref{AM.carext}.
\end{proof}
\begin{lemma} Let $V$ be a regular connected scheme.  If $L$ is a simple object of $\Loc_V(K)$, then $\rho_! L[\delta(V)]$ is simple in $\mathrm{M}^A_{\mathrm{perv}}(V,\Q)$.
\end{lemma}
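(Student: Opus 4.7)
The approach will mirror the classical argument \cite[4.3.1]{bbd}, leveraging the previous lemma (which supplies $F := \rho_!L[\delta(V)] = j_{!*}^A(j^*F)$ for any open $j$) together with the intermediate extension characterization \Cref{AM.carext}. Given a subobject $M \hookrightarrow F$ in $\mathrm{M}^A_{\mathrm{perv}}(V,K)$, the plan is to show that $M \in \{0, F\}$. First, by \Cref{AM.stratification}, I will choose a dense regular open immersion $j\colon U \hookrightarrow V$ over which $j^*M$ is a smooth Artin motive, and denote by $i\colon Z \hookrightarrow V$ the complementary reduced closed immersion.

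Since $V$ is regular and connected (hence integral), $U$ is integral; moreover, the induced map $\pi_1^{\et}(U,\xi) \to \pi_1^{\et}(V,\xi)$ at a geometric generic point $\xi$ is surjective by the normality of $V$ (\cite[V.8.2]{sga1}). Hence \Cref{6 functors lisse} shows that $j^*L$ remains simple in $\Loc_U(K)$, so \Cref{AM.locality}(2)(a) yields that $j^*F = \rho_!(j^*L)[\delta(V)]$ is simple in the abelian subcategory $\mathrm{M}^{smA}_{\mathrm{perv}}(U,K)$. Since $j^*$ is t-exact (\Cref{AM.t-adj2}(1)) and preserves smooth Artin motives, the induced monomorphism $j^*M \hookrightarrow j^*F$ will be either zero or an isomorphism, splitting the analysis into two cases.

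The previous lemma combined with \Cref{AM.carext} will give $i^*F <_{hp} 0$ and $\omega^0 i^!F >_{hp} 0$; in particular $\Hlh^0 i^*F = 0 = \Hlh^0 \omega^0 i^! F$. If $j^*M = j^*F$, then $N := F/M$ is supported on $Z$, hence $N \cong i_*N'$ with $N' \in \mathrm{M}^A_{\mathrm{perv}}(Z,K)$; since $i^*$ is right t-exact (\Cref{AM.t-adj}(3) with $d=0$), applying $\Hlh^0 i^*$ to the surjection $F \twoheadrightarrow N$ produces a surjection $0 \twoheadrightarrow N'$, forcing $M = F$. If instead $j^*M = 0$, then $M \cong i_*M'$ is supported on $Z$; since $\omega^0 i^!$ is left t-exact (\Cref{AM.t-adj}(1)), applying $\Hlh^0 \omega^0 i^!$ to the injection $M \hookrightarrow F$ yields an injection $M' \hookrightarrow 0$, giving $M = 0$. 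The only bookkeeping point will be verifying $\Hlh^0 i^* i_* N' = N'$ and $\Hlh^0 \omega^0 i^! i_* M' = M'$: both reduce to the identities $i^*i_* = \omega^0 i^! i_* = \mathrm{id}$ on Artin motives over $Z$, which follow from the full faithfulness of $i_* = i_!$ together with \Cref{omega^0 prt}(1).
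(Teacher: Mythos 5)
Your proof is correct and matches the paper's intended argument, which is simply a transcription of \cite[4.3.3]{bbd}: take a perverse subobject, restrict to a dense regular open where it becomes smooth Artin, use surjectivity of $\pi_1^{\et}(U,\xi)\to\pi_1^{\et}(V,\xi)$ (normality of $V$) to reduce to simplicity of the local system, then eliminate the remaining sub/quotient supported on the closed complement via the characterization of $F$ as its own intermediate extension. The bookkeeping via $\Hlh^0 i^*$ and $\Hlh^0\omega^0 i^!$ is exactly the adaptation the paper has in mind.
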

\begin{proof}The proof is exactly the same as \cite[4.3.3]{bbd}.
\end{proof}

\begin{proposition}\label{EMX}(compare with \cite[4.9.7]{aps}
) Let $(X,\delta)$ be a d-scheme allowing resolution of singularities by alterations, let $j\colon U\rar X$ be an open immersion with $U$ nil-regular and let $d=\delta(X)$. Ayoub and Zucker defined (see \cite[2.20]{az}) a motive $$\mb{E}_X=\omega^0j_*\un_U.$$ 
Then,
\begin{enumerate} \item $\mb{E}_X[d]=j_{!*}^A (\un_U[d])$.
\item $\mb{E}_X [d]$ is a simple perverse Artin motive over $X$. In particular, $\omega^0j_*\un_U$ is concentrated in degree $d$ with respect to the perverse homotopy t-structure.
\end{enumerate}
\end{proposition}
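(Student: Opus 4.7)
The plan is to prove (1) by verifying the characterization of the intermediate extension given in \Cref{AM.carext}, and then to deduce (2) from the classification of simple objects in \Cref{simple objects}(2). Set $P := \mb{E}_X[d] = \omega^0 j_*\un_U[d]$ and let $i\colon F\rar X$ be the reduced closed complement of $j$. I will show that $P$ lies in $\mathrm{M}^A_{\mathrm{perv}}(X,K)$, that $j^*P = \un_U[d]$, that $i^*P <_{hp} 0$, and that $\omega^0 i^!P >_{hp} 0$; by \Cref{AM.carext} these jointly force $P = j_{!*}^A(\un_U[d])$.

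Two of the four properties are immediate from the formalism of $\omega^0$. Since $j$ is étale, $\alpha_j$ is invertible (\Cref{omega^0 prt}), so $j^*\omega^0 j_* = \omega^0 j^*j_* = \mathrm{id}$ and $j^*P = \un_U[d]$. For $\omega^0 i^!P$, the localization triangle $i_!\omega^0 i^!P \rar P \rar \omega^0 j_*j^*P$ in $\DM_{\et,c}^A(X,K)$ provided by \Cref{AM.six foncteurs constructibles} has right-hand term $\omega^0 j_*\un_U[d] = P$, with right-hand map the identity; hence $i_!\omega^0 i^!P = 0$ and $\omega^0 i^!P = 0$, which is trivially $>_{hp} 0$.

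For perversity I argue in two directions. Topological invariance lets me replace $U$ by $U_{\mathrm{red}}$, which is regular; then by \Cref{delta(S)} the object $\un_U$ sits in perverse homotopy degree $\delta(U) = d$, so $\un_U[d]$ is in the heart. Since $j^*$ is $t_{hp}$-exact by \Cref{AM.t-adj2}, its right adjoint $\omega^0 j_*$ is left $t_{hp}$-exact, giving $P \geqslant_{hp} 0$. The upper bound comes from \Cref{lemme aff lef direct}, which writes $\omega^0 j_*\un_U[d]$ as a finite iterated colimit/extension/retract of objects $g_*\un_Y[\delta(Y)]$ with $g\colon Y\rar X$ finite; each such generator lies in $t_{hp}$-degree $\leqslant 0$ by \Cref{perverse}, and the $\leqslant_{hp} 0$ part of the t-structure is closed under these three operations.

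The most delicate point, and the main obstacle, is the strict bound $i^*P <_{hp} 0$. Applying \Cref{omega^0 prt}(2)(b)(iii) to the object $j_*\un_U$ (whose $j$-pullback $\un_U$ is Artin) identifies $i^*P$ with $\omega^0 i^*j_*\un_U[d]$. I intend to reduce this local bound, via \Cref{omega^0 prt}, proper base change and cdh-descent, to the case of a resolution by alterations $f\colon \widetilde{X}\rar X$ with $\widetilde{X}$ regular and $f^{-1}(F)$ a simple normal crossing divisor; in that case \Cref{az211} identifies the analogous object on $\widetilde{X}$ with $\un_{\widetilde{X}}$, and cdh-descent along the components of the exceptional divisor $E$ reduces the question to motives of the form $(g_\alpha)_*\un_{E_\alpha}$ with $E_\alpha$ regular and $\delta(E_\alpha)\leqslant d-1$, which by \Cref{delta(S)} place $\un_{E_\alpha}[d]$ in perverse homotopy degree at most $-1$; this bound is preserved by the finite pushforwards $(g_\alpha)_*$ by the t-exactness of \Cref{AM.t-adj2}. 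Combined with \Cref{AM.carext}, this yields (1). For (2), decomposing $\un_U$ along the connected components of $U$ and using topological invariance reduces to the case of $U$ regular and connected; then $\un_U = \rho_!\underline{K}$ with $\underline{K}$ a simple object of $\Loc_U(K)$, so by (1) $\mb{E}_X[d] = j_{!*}^A(\rho_!\underline{K}[\delta(U)])$, which is simple by \Cref{simple objects}(2); the concentration of $\omega^0 j_*\un_U$ in perverse homotopy degree $d$ is just the statement that $P$ lies in the heart.
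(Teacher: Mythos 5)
Your overall strategy is sound and closely parallels the paper's argument, though packaged differently: you verify the characterization of \Cref{AM.carext} (perversity, restriction to $U$, $i^*P<_{hp}0$, $\omega^0 i^!P>_{hp}0$), whereas the paper works directly with the image description of $j_{!*}^A$, using the localization triangle $j_!\un_U\rar\omega^0j_*\un_U\rar i_*\omega^0 i^*j_*\un_U$ and showing $\Hlh^d(\omega^0i^*j_*\un_U)=0$. Your computation of $j^*P$, the vanishing of $\omega^0 i^!P$ via the colocalization triangle and full faithfulness of $\omega^0 j_*$, and the two-sided perversity bound (using \Cref{AM.t-adj} for the lower bound and \Cref{lemme aff lef direct} for the upper bound) are all correct; the paper invokes the packaged \Cref{Affine Lefschetz direct image} for the same heart statement, but that proposition is proved from \Cref{lemme aff lef direct}, so your route is equivalent.

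There is, however, a genuine gap in your treatment of $i^*P<_{hp}0$. After reducing to a resolution $f\colon\widetilde{X}\rar X$ with exceptional divisor $E$ (a simple normal crossing divisor), you assert that the cdh-descent leaves you with ``finite pushforwards $(g_\alpha)_*$'' of $\un_{E_\alpha}$ from the components $E_\alpha$ of $E$ to $F$, and you apply the $t_{hp}$-exactness of finite pushforward from \Cref{AM.t-adj2}. But the structural maps $E_\alpha\rar F$ are \emph{not} finite in general: the fibers of a resolution over the singular locus can be positive-dimensional (already for an isolated surface singularity resolved by a blow-up, the exceptional curve maps to a point, so the pushforward is proper but not finite). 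So the $t_{hp}$-exactness you invoke does not apply, and the bound does not follow from the cited lemma. The correct statement to use at this point is the \emph{right} $t_{hp}$-exactness of $\omega^0\pi_*$ for $\pi$ \emph{proper} --- this is exactly \Cref{affine lef direct image}, applied to $\pi\colon E\rar F$ --- which together with $\un_E\leqslant_{hp}\delta(E)\leqslant d-1$ yields $\omega^0\pi_*\un_E\leqslant_{hp}d-1$, the bound you want. You have already cited the underlying noetherian induction \Cref{lemme aff lef direct} for the upper bound on $P$ itself, so the same tool closes the gap here; just replace the erroneous ``finite pushforward'' appeal by \Cref{affine lef direct image}. (You should also carry out the reduction to the birational, normal, connected case along the lines of the first step of the proof of \Cref{lefschetz affine 2Q} --- passing through a Galois retract via \cite[2.1.165]{ayo07} --- before invoking cdh-descent and \Cref{az211}; your sketch elides this, and it is needed to identify $\omega^0 i^*j_*\un_U$ with $\omega^0\pi_*\un_E$.)
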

\begin{proof} First, notice that Assertion (2) follows from Assertion (1) and \Cref{simple objects}. \Cref{Affine Lefschetz direct image} ensures that $\omega^0j_*\un_U$ is in degree $d$ with respect to the perverse homotopy t-structure. Moreover, we have an exact triangle:
$$j_!\un_U \rar \omega^0j_*\un_U \rar i_*\omega^0i^*j_*\un_U.$$

thus, it suffices to show that $\Hlh^d(\omega^0i^*j_*\un_U)=0$. To prove this, note that we can assume $X$ to be normal and connected as in the first step of the proof of \Cref{lefschetz affine 2Q}. By assumption, there is a proper and surjective map $p\colon Y\rar X$ such that $Y$ is regular and integral and $p$ is generically the composition of an étale Galois cover and of a finite surjective purely inseparable morphism. 

Therefore, there is a closed subscheme $S$ of $X$ of positive codimension such that $p:Y\setminus p^{-1}(S)\rar X\setminus S$ is the composition of an étale Galois cover of group $G$ and of a finite surjective purely inseparable morphism.

Let $X'$ be the relative normalization of $X$ in $Y\setminus p^{-1}(S)$. Then, the scheme $X'$ is finite over $X$ and endowed with a $G$-action and the morphism $X'/G\rar X$ is purely inseparable since $X$ is normal. We have a commutative diagram

$$\begin{tikzcd}U' \ar[d,swap, "p_U"] \ar[r,"j'"] & X' \ar[d,"p"] & F' \ar[l,swap,"i'"] \ar[d,"p_F"] \\
U \ar[r,"j"] & X & F \ar[l,"i"] 
\end{tikzcd}$$
made of cartesian squares. Using \cite[2.1.165]{ayo07}, the motive $\un_U$ is a retract of the motive $(p_U)_* \un_{U'}$. Moreover, we have $$\omega^0i^*j_*(p_U)_* \un_{U'}=(p_F)_* \omega^0 (i')^*j'_* \un_{U'}.$$ Thus, we can replace $X$ with $X'$ and assume that the group $G$ is trivial. In this case, the morphism $p$ is birational. Let $\pi\colon E\rar F$ be the pullback of $p$ along $i$. Using the same argument as in the first step of the proof of \Cref{lefschetz affine 2Q}, we get that $$\omega^0i^*j_*\un_U=\omega^0\pi_* \un_E.$$

The proof then follows from \Cref{affine lef direct image}.
\end{proof}

\section{Integral coefficients}
\subsection{The t-structure on torsion motives}
The goal of this section is to link the perverse homotopy t-structure with the classical perverse t-structure in the torsion case. 

Let us start by some recollection on the perverse t-structure on the category of torsion étale sheaves over base schemes endowed with a dimension function following \cite{notesgabber}. Let $R$ be a commutative ring. The self-dual perversity $p_{1/2}\colon Z\mapsto -\delta(Z)$ induces two t-structures $[p_{1/2}]$ and $[p_{1/2}^+]$ on $\Shv(S_{\et},R)$. In this paper, we only consider the t-structure $[p_{1/2}]$. 

\begin{definition} (\cite[2]{notesgabber}) 
Let $(S,\delta)$ be a d-scheme 
and let $R$ be a commutative ring.
Define
\begin{itemize}
\item the full subcategory ${}^{p} \Shv^{\leqslant 0}(S_{\et},R)$ made of those $M$ such that for any point $x$ of $S$, we have $$i_x^*M\leqslant p_{1/2}(\overline{\{ x\}})=-\delta(x).$$ 
\item the full subcategory ${}^{p} \Shv^{\geqslant 0}(S_{\et},R)$ made of those bounded below étale sheaves $M$ such that for any point $x$ of $S$, we have $$i_x^!M\geqslant p_{1/2}(\overline{\{ x\}})=-\delta(x).$$ 
\end{itemize}
\cite[6]{notesgabber} implies that this defines a t-structure on $\Shv(S_{\et},R)$: the \emph{perverse t-structure} $t_p$.
\end{definition}

\begin{definition}
 Let $S$ be a scheme, let $R$ be a commutative ring and let $p$ be a prime number.
 We define stable categories as follows.
\begin{enumerate}
 \item The stable category of \emph{torsion étale motives} $\mc{DM}_{\et,\mathrm{tors}}(S,R)$ as the subcategory of $\mc{DM}_{\et}(S,R)$ made of those objects $M$ such that the motive $M\otimes_\Z \Q$ vanishes.
 \item The stable category of \emph{$p^\infty$-torsion étale motives} $\mc{DM}_{\et,p^\infty-\mathrm{tors}}(S,R)$ as the subcategory of the stable category $\mc{DM}_{\et}(S,R)$ made of those objects $M$ such that the motive $M\otimes_\Z \Z[1/p]$ vanishes.
 \item Likewise, we define the category of \emph{torsion étale sheaves} $\Shv_{\mathrm{tors}}(S_{\et},R)$ and the category of \emph{$p^\infty$-torsion étale sheaves} $\Shv_{p^\infty-\mathrm{tors}}(S_{\et},R)$. 
\end{enumerate}
\end{definition}

\begin{proposition}
Let $(S,\delta)$ be a d-scheme and let $R$ be a commutative ring. Then, the perverse t-structure of $\Shv(S_{\et},R)$ induces a t-structure on $\Shv_{p^\infty-\mathrm{tors}}(S_{\et},R)$. 
\end{proposition}
\begin{proof} We only need to show that the functor $-\otimes_\Z \Z[1/p]$ is perverse t-exact: indeed it suffices to show that the perverse truncation functor preserves $p^\infty$-torsion sheaves (compare with the proof of \cite[2.2.2]{AM1}). The functor is indeed t-exact because if $x$ is a point of $S$, denoting by $i_x\colon\{ x\}\rar S$ the inclusion, the functors $i_x^*$ and $i_x^!$ commute with the functor $-\otimes_\Z \Z[1/p]$.
\end{proof}

\begin{definition}\label{perverse torsion} Let $(S,\delta)$ be a d-scheme and let $R$ be a commutative ring. The \emph{perverse t-structure} over the stable category $\mc{DM}_{\et,\mathrm{tors}}(S,R)$ is the only t-structure such that the equivalence $$\prod_{p \text{ prime}} \Shv_{p^\infty-\mathrm{tors}}(S[1/p]_{\et},R) \rar \mc{DM}_{\et,\mathrm{tors}}(S,R)$$
which sends an object $(M_p)_{p \text{ prime}}$ to the object $\bigoplus\limits_{p \text{ prime}} j_p^*(\rho_! M_p)$ is t-exact when the left hand side is endowed with the product of the perverse t-structures (recall that it is indeed an equivalence by \cite[2.1.4]{AM1}).
\end{definition}

\begin{proposition}\label{perverse homotopy torsion} Let $(S,\delta)$ be a d-scheme and let $R$ be a commutative ring. The inclusion functor $$\DM_{\et,\mathrm{tors}}(S,R)\rar \DM_{\et}^A(S,R)$$ is t-exact when the left hand side is endowed with the perverse t-structure and the right hand side is endowed with the perverse homotopy t-structure.
\end{proposition}
\begin{proof} If $f\colon X\rar S$ is quasi-finite, \Cref{442em} implies that the functor $\rho_!$ maps the étale sheaf $f_!\underline{R}$ to the motive $M_S^{BM}(X)$. \Cref{generators perv} below asserts that the sheaves of the form $f_!\underline{R}[\delta(X)]$ with $f\colon X\rar S$ quasi-finite generate the perverse t-structure on the stable category $\Shv(S_{\et},R)$. Since the exact functor $$\rho_!\colon \Shv(S_{\et},R)\rar \DM_{\et}(S,R)$$ is compatible with small colimits, it is therefore right t-exact.

We can prove the left t-exactness using the same proof as the proof of the left t-exactness in \cite[4.1.6]{AM1}. We also give a shorter proof. 

Let $M$ be a torsion étale motive which is perverse t-non-negative. By definition of the t-structure, $M$ is bounded below (with respect to the ordinary t-structure) and for any point $x$ of $S$, we have $$i_x^!M\geqslant -\delta(x)$$ with respect to the ordinary t-structure. \Cref{omega^0 torsion} then ensures that $$i_x^!M=\omega^0 i_x^!M.$$
Therefore, using \cite[4.1.6]{AM1}, we have $$\omega^0 i_x^!M\geqslant_{\ord} -\delta(x).$$ 
\Cref{AM.locality}(1)(a) ensures that the Artin motive $M$ is therefore $t_{hp}$-non-negative.
\end{proof}


\begin{lemma}
    \label{omega^0 torsion} Let $S$ be a scheme and let $R$ be a commutative ring. The functor $\omega^0\rar \id$ induces an equivalence over $\mc{DM}_{\et,\mathrm{tors}}(S,R)$.
\end{lemma}
\begin{proof}This follows from the fact that all torsion motives are Artin motives by \cite[1.5.7]{AM1}.
\end{proof}

\begin{lemma}\label{generators perv} Let $(S,\delta)$ be a d-scheme and let $R$ be a commutative ring. The perverse t-structure on $\Shv(S_{\et},R)$ is generated in the sense of \Cref{AM.t-structure generated} by the étale sheaves of the form $f_!\underline{R}[\delta(X)]$ with $f\colon X\rar S$ quasi-finite.
\end{lemma}
\begin{proof} When $S$ is the spectrum of a field, the result follows from \cite[2.2.1]{AM1}.
Denote by $t_0$ the t-structure generated by the étale sheaves of the form $f_!\underline{R}[\delta(X)]$ with $f\colon X\rar S$ quasi-finite.
It suffices to show that a sheaf $M$ is perverse t-non-negative if and only if it is $t_0$-non-negative.
Hence, it suffices to show that a sheaf $M$ is $t_0$-non-negative if and only if it is bounded below and for any point $x$ of $S$, the étale sheaf $i_x^!M$ is $t_0$-non-negative.

To prove the "only if" part, notice that if $f$ is a quasi-finite morphism, the adjunction $(f_!,f^!)$ is a $t_0$-adjunction. Indeed, the full subcategory of those étale sheaves $N$ such that $f_!N$ is $t_0$-non-positive is closed under extensions and small colimits and contains the set of generators of $t_0$. Thus, this subcategory contains all $t_0$-non-positive objects. Hence, the functors $f_!$ is right $t_0$-exact and therefore, the functor $f^!$ is left-$t_0$-exact. Hence, if $M$ is $t_0$-non-negative and if $x$ is a point of $S$, the étale sheaf $i_x^!M$ is $t_0$-non-negative which yields 
$$i_x^!M\geqslant -\delta(x)$$ 
with respect to the ordinary t-structure on $\Shv(k(x)_{\et},R)$.

Furthermore, if $M$ is $t_0$-non-negative, if $c$ is a lower bound for the function $\delta$ on $S$, the complex $\Map(R_S(X),M)$ is $(c-1)$-connected for any étale $S$-scheme $X$. Hence, the étale sheaf sheaf $M$ is bounded below and it is therefore perverse t-non-negative.

We now prove the converse. Let $M$ be a bounded below étale sheaf such that for all point $x$ of $S$, the étale sheaf $i_x^!M$ is $t_0$-non-negative. Let $f\colon X\rar S$ be quasi-finite. 

As in the proof of \Cref{AM.locality}, we have the $\delta$-niveau spectral sequence 
$$E^1_{p,q}\Rightarrow \Hl^{p+q}\Map_{\Shv(S_{\et},R)}(f_!\underline{R},M)$$
with
$$E^1_{p,q}=\bigoplus_{y\in X,\hspace{1mm} \delta(y)=p} \Hl^{p+q} \Map_{\Shv(k(f(y))_{\et},R)}((f_y)_!\underline{R},i_{f(y)}^!M).$$

Since for any point $x$ of $S$, the sheaf $i_x^!M$ is $t_0$-non-negative, for any point $y$ of $X$, the complex $\Map_{\Shv(k(f(y))_{\et},R)}((f_y)_!\underline{R},i_{f(y)}^!M)$ is $(-\delta(y)-1)$-connected and thus it is $(-\delta(X)-1)$-connected. 

Thus, if $n>\delta(X),$ the $R$-module $\Hl^{-n}\Map_{\Shv(S_{\et},R)}(f_!\underline{R},M)$ vanishes. Hence, the étale sheaf $M$ is $t_0$-non-negative.
\end{proof}




\subsection{Pathological behaviors of Artin truncation}
\Cref{omega^0} is on of the key points for the existence of the t-structure with rational coefficients. It becomes (very) false integrally. Namely we have \Cref{quand c'est qu'c'est constructible} below which gives an idea of how bas things can get. We can however still make a lot of computations as is shown in \Cref{omega^0 sur un corps}. This will be our lifeline in order to prove that the t-structure still exists in dimension $2$ or less (see \Cref{AM.main theorem}).

\begin{proposition}\label{quand c'est qu'c'est constructible} Let $R$ be a localization of $\Z$ and let $f\colon X\rar S$ be a separated morphism of finite type. If the object $\omega^0 f_! \un_X$ of $\DM^A_{\et}(S,R)$ is constructible, then we are in one of the following cases:
\begin{enumerate}\item $R=\Q$.
\item the map $f$ is quasi-finite.
\item there is a prime number $p$ such that $R=\Z_{(p)}$ and all the points $x$ of $S$ such that $f^{-1}(x)$ is infinite are of residue characteristic $p$.
\end{enumerate}
\end{proposition}
\begin{proof}
	Let $x$ be a point such that $f^{-1}(x)$ is infinite. We can chose $x$ to be of minimal codimension in $S$. Since pullback by an open immersion commutes with $\omega^0$, we can replace $S$ with an open neighborhood of $x$. Letting $Z$ be the closure of $\{x\}$ and $U$ be its complement we can therefore assume that the restriction of $f$ to $U$ is quasi-finite. Denoting by $i$ (resp. $j$) the associated closed (resp. open) immersion, $j^*f_!\un_X$ is an Artin motive and thus by \cite[3.3]{plh}, the natural transformation $i^*\omega^0f_! \un_X \to \omega^0i^*f_! \un_X$ is an equivalence. Hence, we can replace $S$ with $Z$ in order to assume that $x$ is the generic point of $S$.

	Then, by \cite[3.3]{plh}, $i_x^*=i_x^!$ commutes with $\omega^0$, which shows that we can assume $S$ to be reduced to a single point. In that case, the result follows from \Cref{Lemme du cas sur un corps} below. 
\end{proof}

\Cref{Lemme du cas sur un corps} will follow from \Cref{omega^0 sur un corps}. To state this result, we need to recall basic information on Artin motives over a field from \cite{AM1}.

\begin{definition} Let $\pi$ be a topological group and let $R$ be a commutative ring. An $R[\pi]$-module $M$ is \emph{discrete} if the action of $\pi$ on $M$ is continuous when $M$ is endowed with the discrete topology. We will denote by $\mathrm{Mod}(\pi,R)$ the Grothendieck abelian category of discrete $R[\pi]$-modules.
\end{definition}

\begin{example}\label{Tate twist} Let $k$ be a field of characteristic exponent $p$. If $n$ is a positive integer coprime to $p$, the absolute Galois group $G_k$ of $k$ acts on the abelian group $\mu_n(\overline{k})$. Let $(T_n)$, where $n$ runs through the set of positive integers which are coprime to $p$, be an inductive system of discrete representations of $G_k$ such that $T_n$ is of $n$-torsion. Set $T=\colim T_n$ and let $m$ be an integer, we can define a discrete representation $$T(m):=\colim T_n \otimes \mu_n^{\otimes m}$$ of $G_k$. We call this representation the \emph{$m$-th discrete Tate twist} of $T$ (beware that it depends on the inductive system and not only on $T$ itself in general). By definition, $T$ is isomorphic to every $T(m)$ as an abelian group but not as a $G_k$-module in general.

For example, let $X\rar \Spec(k)$ be a morphism of finite type, let $\overline{k}$ be the algebraic closure of $k$, let $X_{\overline{k}}$ be the scalar extension of $X$ to $\overline{k}$ and let $n$ be a non-negative integer. Then, the $m$-th discrete Tate twist of the discrete Galois representation $\Hl^n_{\et}(X_{\overline{k}},\Q/\Z[1/p])$ will by convention be the discrete representation $$\colim\limits_{(N,p)=1} \Hl^n_{\et}(X_{\overline{k}},\Z/N\Z)\otimes \mu_N^{\otimes m}.$$
\end{example}
\begin{remark}
    With the notation of the example above, we have in fact $T(m)\simeq T \otimes_{\widehat{\Z}'} \widehat{\Z}'(m)$ where $\widehat{\Z}'=\prod\limits_{\ell \neq p} \Z_\ell$.
\end{remark}

\begin{emptypar}\label{notations alpha_!} Let $R$ be a localization of $\Z$. Let $k$ be a field of characteristic exponent $p$ and let $\sigma\colon R \to R[1/p]$ be the localization morphism.
	By \cite[3.3.7]{AM1}, the functor $\rho_!$ from \Cref{small et site} induces an equivalence 
	\[\Shv(k_{\et},R)\xrightarrow{\sigma_* \rho_!}\DM^A_{\et}(k,R)\]
	and furthermore, letting $\xi\colon\Spec(\overline{k})\rar \Spec(k)$ be the algebraic closure of $k$, composing this equivalence with the inverse of the fiber functor $\xi^*$ associated to $\xi$ yields an equivalence $$\alpha_!\colon \mc{D}(\mathrm{Mod}(G_k,R[1/p]))\rar \mc{DM}^A(k,R).$$
Furthermore, constructible Artin motives are exactly the essential image of the subcategory $\mc{D}^b(\Rep^A(G_k,R))$ of bounded complexes of Artin representations of $G_k$. 
	
	Let $f\colon X\rar S$ be a morphism of finite type and let $X_{\overline{k}}$ be the pull-back of $X$ to $\overline{k}$. We let \[h^n(X,R)=\Hl^n_{\et}(X_{\overline{k}},R[1/p]\otimes_\Z \Q/\Z).\]



	If $N$ is a positive integer, letting $\mu_N$ be the sheaf of $N$-th roots of unity over $k_{\et}$, 
	for $m$ an integer, we define the \emph{$m$-th Tate twist} of the étale sheaf $f_* \left[\underline{R[1/p]\otimes_\Z \Q/\Z}\right]$ as the étale sheaf $$f_* \left[\underline{R[1/p]\otimes_\Z \Q/\Z}\right](m):=\colim_{(n,p)=1} f_*\left[\underline{R[1/p]\otimes_\Z \Z/n\Z}\right]\otimes_\Z \mu_n^{\otimes m}.$$

	Through $\alpha_!$, the $i$-th cohomology group of $f_* \left[\underline{R[1/p]\otimes_\Z \Q/\Z}\right](m)$ seen as a complex of discrete $G_k$-modules is $h^n(X,R)(m)$. 
\end{emptypar}

We can now state the main result of this section:
\begin{theorem}\label{omega^0 sur un corps} Let $R$ be a localization of $\Z$, let $k$ be a field of characteristic exponent $p$, let $\sigma\colon R\rar R[1/p]$ be the localization morphism and let $f\colon X\rar \Spec(k)$ be a morphism of finite type. Assume that the scheme $X$ is regular.
Then, 
\begin{enumerate}
\item The diagram 
$$\begin{tikzcd}\Shv_{\In\lis}(X,R[1/p])\ar[r,"\sigma_*\rho_!"] \ar[d,"f_*"]& \DM_{\et}^{smA}(X,R) \ar[d,"\omega^0f_*"] \\
\Shv(k_{\et},R[1/p]) \ar[r,"\sigma_*\rho_!"] & \DM^A_{\et}(k,R)
\end{tikzcd}$$
is commutative.
\item We have $$\omega^0f_* \un_X=\rho_!f_* \underline{R[1/p]}.$$

\item If $m$ is a positive integer, we have $$\omega^0(f_* \un_X(-m))=\rho_!\left[f_* \left(\underline{R[1/p]\otimes \Q/\Z}\right)[-1](-m)\right].$$
\end{enumerate}


\end{theorem}

\begin{proof} We can assume that $p$ is invertible in $R$. We have pairs of adjoint functors 
$$f^*\colon\mc{DM}^A_{\et}(k,R)\leftrightarrows\mc{DM}^{smA}_{\et}(X,R)\colon \omega^0f_*$$ 
and 
$$f^*\colon\Shv_{\In\lis}(k,R)\leftrightarrows\Shv_{\In\lis}(X,R)\colon f_*.$$ 
Since the functor $\rho_!$ is an equivalence and commutes with the functor $f^*$ by \Cref{442em}, this yields the first assertion.

The second assertion follows from the first one, since we have $$\un_X=\rho_!\underline{R}.$$

We now prove the third assertion. Let $\pi\colon \mb{P}^m_X\rar \Spec(k)$ be the canonical projection. We have $$\pi_*\un_{\mb{P}^m_X}=\bigoplus\limits_{i=0}^mf_*\un_X(-i)[-2i]$$ by the projective bundle formula.
On the other hand, we have an exact triangle étale sheaves
$$\pi_* \left(\underline{R}\right)\rar \pi_* \left(\underline{R\otimes_\Z \Q}\right)\rar \pi_*\left( \underline{R\otimes_\Z \Q/\Z}\right).$$
\cite[2.1]{deninger} yields $$\pi_* \left(\underline{ R\otimes_\Z\Q}\right)=f_* \left(\underline{R\otimes_\Z\Q}\right)$$ and the projective bundle formula in étale cohomology yields $$\pi_*\left( \underline{R\otimes_\Z \Q/\Z}\right)=\bigoplus \limits_{i=0}^m f_* \left(\underline{R\otimes_\Z \Q/\Z}\right)(-i)[-2i].$$
The result follows by induction on $m$.
\end{proof}

\begin{definition}\label{pi_0(X/k)} Let $f\colon X\rar \Spec(k)$ be a morphism of finite type, let $\overline{k}$ be the algebraic closure of $k$ and let $X_{\overline{k}}$ be the pull-back of $X$ to $\overline{k}$. The set $\pi_0(X_{\overline{k}})$ of connected components of $X_{\overline{k}}$ is a finite $G_k$-set and therefore corresponds to a finite étale $k$-scheme through Galois-Grothendieck correspondence \cite[VI]{sga1}. We let $\pi_0(X/k)$ be this scheme and call it the \emph{scheme of connected components of $X$}.
\end{definition}
\begin{remark}When $f$ is proper, this scheme coincides with the scheme $\pi_0(X/k)$ defined in \cite{az} as the Stein factorization of $f$. In general, the scheme $\pi_0(X/k)$ coincides with the Stein factorization of any compactification $\overline{X}$ of $X$ which is normal in $X$ in the sense that $X$ is its own relative normalization in $\overline{X}$.
\end{remark}

\begin{corollary}\label{omega^0 corps} Let $R$ be a localization of $\Z$, let $k$ be a field of characteristic exponent $p$ and let $f\colon X\rar \Spec(k)$ be a morphism of finite type. Assume that the scheme $X$ is regular.
Then,
\begin{enumerate}\item We have $$\Hl^0(\omega^0f_* \un_X)=h_k(\pi_0(X/k)).$$

\item  We have $$\Hl^n(\omega^0f_* \un_X)=\alpha_!\left[\Hl^n_{\et}(X_{\overline{k}},R[1/p])\right]$$
and 
$$\Hl^n_{\et}(X_{\overline{k}},R[1/p])=\begin{cases} R[1/p][\pi_0(X_{\overline{k}})]& \text{ if }n=0\\
h^{n-1}(X,R)& \text{ if }n\geqslant 2\\
0 &\text{ otherwise}
\end{cases}.$$

\item If $m$ is a positive integer and if $n$ is an integer, we have $$\Hl^n(\omega^0f_* \un_X(-m))=\alpha_!\left[\mu^{n-1}(X,R)(-m)\right].$$
\end{enumerate}
\end{corollary}
\begin{remark}
	When $X$ is not regular, we can still compute the cohomology groups of $\omega^0f_* \un_X$ by using resolutions of singularities by alterations and h-descent. This will be done in some cases below.
\end{remark}

We can finish the proof of \Cref{quand c'est qu'c'est constructible}. 

\begin{lemma}\label{Lemme du cas sur un corps} Let $R$ be a localization of $\Z$, let $k$ be a field of characteristic exponent $p$ and let $f\colon X\rar \Spec(k)$ be a morphism of finite type. Assume that the ring $R[1/p]$ is not a $\Q$-algebra and that the morphism $f$ is not finite. Then, the Artin motive $\omega^0 f_! \un_X$ is not constructible.
\end{lemma}
The lemma follows from the following more precise statement.

\begin{lemma}\label{interesting vanishing} Let $R$ be a localization of $\Z$, let $k$ be a field of characteristic exponent $p$, let $f\colon X\rar \Spec(k)$ be a separated morphism of finite type and let $d=\dim(X)$. Assume that the ring $R[1/p]$ is not a $\Q$-algebra and that $d>0$. Then, the motive $\Hl^{2d+1}(\omega^0 f_! \un_X)$ is not the image of an Artin representation through the map $\alpha_!$. Furthermore, if $n>2d+1$, the motive $\Hl^{n}(\omega^0 f_! \un_X)$ vanishes.
\end{lemma}

\begin{example}
    With the notations of the lemma, assume that $R=\Z$ and that $X=\mb{P}^1_k$. Then, by \Cref{omega^0 corps}, the motive $\Hl^3(\omega^0f_!\un_{\mb{P}^1_k})$ is equivalent to $\alpha_![\Q/\Z[1/p](-1)]$. The idea of this lemma is to prove that for a general $k$-scheme of finite type $X$ with $n$ connected components, it still remains true that the top cohomology object of $\omega^0f_!\un_X$ is placed in degree $2\dim(X)+1$ and corresponds through the equivalence $\alpha_!$ to the direct sum of $n$ copies of $\Q/Z\otimes_{\Z} R$ (which is not of finite presentation as an $R$-module) endowed with some action of $G_k$.
\end{example}
\begin{proof} Since change of coefficients preserves constructible objects, we may assume that there is a prime number $\ell\neq p$ such that the ring $R=\Z_{(\ell)}$. We may also assume that the scheme $X$ is connected.

We proceed by induction on the dimension of $X$. By compactifying $X$ using Nagata's theorem \cite[0F41]{stacks}, and by using the localization triangle \eqref{AM.localization} and the induction hypothesis, we may assume that the morphism $f$ is proper. By normalizing $X$, by using cdh-descent and by using the induction hypothesis again, we may also assume that the scheme $X$ is normal.

Using Gabber's $\ell$-alterations \cite[IX.1.1]{travauxgabber}, there is a proper and surjective map $p\colon Y\rar X$ such that $Y$ is regular and integral and $p$ is generically the composition of an étale Galois cover of group of order prime to $\ell$ and of a finite surjective purely inseparable morphism. 
Therefore, there is a closed subscheme $F$ of $X$ of positive codimension such that the induced map $$p\colon Y\setminus p^{-1}(F)\rar X\setminus F$$ is the composition of an étale Galois cover of group $G$, where the order of $G$ is prime to $\ell$, and of a finite surjective purely inseparable morphism. 

Let $X'$ be the relative normalization of $X$ in $Y\setminus p^{-1}(F)$. Then, the map $X'\rar X$ is finite  and the scheme $X'$ is endowed with a $G$-action. Furthermore, the map $X'/G\rar X$ is purely inseparable since $X$ is normal. 

Using \cite[2.1.165]{ayo07}, for any integer $n$ we have a canonical isomorphism $$\Hl^n(\omega^0 h_k(X))\rar \Hl^n(\omega^0 h_k(X'))^G.$$
Furthermore, by cdh-descent, and by using the induction hypothesis, the canonical map

$$\Hl^n(\omega^0 h_k(X'))\rar \Hl^n(\omega^0 h_k(Y))$$ is an isomorphism if $n>2d$. 
Now, by Poincaré Duality in étale cohomology \cite{sga4.5} and by \Cref{omega^0 sur un corps}, since $d$ is positive, we have $$\Hl^{2d+1}(\omega^0 h_k(Y))=\alpha_!\hspace{1mm}\left[\Q/\Z(-d)\otimes_\Z R[\pi_0(Y_{\overline{k}})]\right]$$ and the $G$-action on $\Hl^{2d+1}(\omega^0 h_k(Y))$ is induced by the action of $G$ on the connected components of $Y_{\overline{k}}$. 

Since the map $Y\rar X'$ is an isomorphism in codimension $1$, the map $$\pi_0(Y_{\overline{k}})\rar \pi_0(X'_{\overline{k}})$$ is a bijection. Thus, we get $$\Hl^{2d+1}(\omega^0 h_k(Y))=\alpha_!\hspace{1mm}\left[\Q/\Z(-d)\otimes_\Z \Z_{(\ell)}[\pi_0(X'_{\overline{k}})]\right]^G$$

If $x\in \Q/\Z(-d)$ and if $S$ is a connected component of the scheme $X'_{\overline{k}}$, denote by $[x]_S$ the element $x\otimes e_S$ of $\Q/\Z(-d)\otimes_\Z \Z_{(\ell)}[\pi_0(X'_{\overline{k}})]$ where $e_S$ is the element of the basis of $R[\pi_0(X'_{\overline{k}})]$ attached to $S$. Then, the $G$-fixed points are exactly the elements of the form:

$$\sum\limits_{\substack{T \in \pi_0(X_{\overline{k}}), S \in \pi_0(X'_{\overline{k}}) \\ p(S)=T}}[x_T]_S$$
where $x_T\in \Q/\Z(-d)$. Therefore, this set of $G$-fixed points is isomorphic as a $G_k$-module to $$\Q/\Z(-d)\otimes_\Z \Z_{(\ell)}[\pi_0(X_{\overline{k}})]$$ which is not an Artin representation.
\end{proof}

\subsection{The t-structure for schemes of dimension at most \texorpdfstring{$2$}{2}}

To prove that the t-structure still exists in some cases, the key point will actually be our first version of Artin vanishing. We show that it still works integrally. In order to make sense of this, recall the following definition from \cite{AM1}. 

\begin{definition}
	Let $S$ be a scheme and let $R$ be a commutative ring. The category $\DM_{\et,\Q-c}^{(sm)A}(S,R)$ of $\Q$-constructible (smooth) Artin motives is the subcategory of $\DM^{(sm)A}_{\et}(S,R)$ made of those $M$ such that $M\otimes_\Z \Q$ is constructible.  
\end{definition}

Using the same proof as \cite[4.2.4]{AM1}, replacing \cite[4.1.5]{AM1} with \Cref{coeff change perverse} and \cite[4.1.6]{AM1} with \Cref{perverse homotopy torsion} yields
\begin{corollary}\label{Q-cons perv} Let $S$ be an excellent scheme and let $R$ be a commutative ring. Then, the perverse homotopy t-structure on the stable category $\DM_{\et}^A(S,R)$ induces a t-structure on the stable subcategory $\DM_{\et,\Q-c}^A(S,R)$.
\end{corollary}

We can now prove the appropriate generalization of \Cref{lefschetz affine 2Q}.
\begin{theorem}\label{Affine Lefschetz Q-cons} Let $(S,\delta)$ be an excellent scheme, let $f\colon X\rar S$ be a quasi-finite morphism of schemes and let $R$ be a localization of $\Z$. Assume that $X$ is nil-regular and that we are in one of the following cases 
\begin{enumerate}[label=(\alph*)]
    \item We have $\dim(S)\leqslant 2.$
    \item There is a prime number $p$ such that the scheme $S$ is of finite type over $\mb{F}_p$. 
\end{enumerate}

Then, the functor $$f_!\colon \DM^{smA}_{\et,\Q-c}(X,R)\rar \DM^A_{\et,\Q-c}(S,R)$$ is $t_{hp}$-exact.
\end{theorem}
\begin{proof} \Cref{AM.t-adj} implies that the functor $f_!$ is right $t_{hp}$-exact. Hence, it suffices to show that it is left $t_{hp}$-exact. 

Let $M$ be a $\Q$-constructible motive over $X$ which is $t_{hp}$-non-negative. We have an exact triangle $$M\otimes_\Z \Q/\Z[-1]\rar M\rar M\otimes_\Z\Q .$$

By \Cref{coeff change perverse}, the motive $M\otimes_\Z\Q$ is $t_{hp}$-non-negative. Since the subcategory of $t_{hp}$-non-negative is closed under limits, the motive $M\otimes_\Z \Q/\Z[-1]$ is also $t_{hp}$-non-negative.

Furthermore, we have an exact triangle $$f_!(M\otimes_\Z \Q/\Z[-1])\rar f_!(M)\rar f_!(M\otimes_\Z\Q).$$ By \Cref{lefschetz affine 2Q}, the motive $f_!(M\otimes_\Z\Q)$ is $t_{hp}$-non-negative and by \Cref{Affine Lefschetz torsion}, the motive $f_!(M\otimes_\Z \Q/\Z[-1])$ is $t_{hp}$-non-negative. Since the subcategory of $t_{hp}$-non-negative Artin motives is closed under extensions, the motive $f_!(M)$ is also $t_{hp}$-non-negative.
\end{proof}

\begin{lemma}\label{Affine Lefschetz torsion} 
 Keep the same notations. Then, the functor $f_!$ induces a left t-exact functor $$\mc{DM}_{\et,\mathrm{tors}}(X,R)\rar\mc{DM}_{\et,\mathrm{tors}}(S,R)$$ when both sides are endowed with their perverse t-structures.
\end{lemma}
\begin{proof}
 Since the functor of \Cref{perverse torsion} commutes with the six functors by \cite[2.1.4]{AM1}, it suffices to show that if $p$ is a prime number and is invertible on $S$, the functor $$f_!\colon \Shv_{p^\infty-\mathrm{tors}}(X_{\et},R)\rar \Shv_{p^\infty-\mathrm{tors}}(S_{\et},R)$$ is left t-exact when both sides are endowed with their perverse t-structures.

Let $M$ be a $p^\infty$-torsion étale sheaf which is perverse t-non-negative. 
We have $$M=\colim_n M\otimes_\Z \Z/p^n\Z[-1].$$
Let $n$ be a positive integer. 
Since we have an exact triangle
$$M\otimes_\Z \Z/p^n\Z[-1]\rar M\overset{\times p^n}{\rar} M,$$
the sheaf $M\otimes_\Z \Z/p^n\Z[-1]$ is perverse t-non-negative. 
Therefore, by \Cref{affine lefschetz non cons} below, the sheaf $f_!(M\otimes_\Z \Z/p^n\Z[-1])$ is perverse t-non-negative.
Since the functor $f_!$ is a left adjoint, it is compatible with colimits; 
now, the canonical functor $$\colim_n \Map(g_! \und{R}[\delta(Y)],f_!(M)\otimes_\Z \Z/p^n\Z[-1]) \rar \Map(g_! \und{R}[\delta(Y)],\colim_n f_!(M)\otimes_\Z \Z/p^n\Z[-1])$$ is an equivalence for any $g\colon Y\to S$ quasi-finite (this can be proved in the same fashion as \cite[1.2.4 (5)]{AM1}). Therefore, the sheaf $f_!(M)$ is perverse t-non-negative using \Cref{generators perv}.
\end{proof}

\begin{lemma}\label{affine lefschetz non cons} Keep the same notations. Let $p$ be a prime number which is invertible on $S$ and let $n$ be a positive integer. Then, the functor 
$$f_!\colon \Shv(X_{\et},\Z/p^n\Z)\rar \Shv(S_{\et},\Z/p^n\Z)$$
is left t-exact.
\end{lemma}
\begin{proof} By definition of the perverse t-structure, we can assume that the scheme $S$ is the spectrum of a strictly henselian ring. In that case, since the étale sheaves of the form $g_!\underline{\Z/p^n\Z}[\delta(X)]$ with $g\colon X\rar S$ quasi-finite are compact, the perverse t-structure is compatible with filtered colimits. 

Let $M$ be an étale sheave over $X_{\et}$ which is t-non-negative we want to prove that the sheaf $f_!M$ is. The sheaf $M$ is a filtering colimit of constructible sheaves. Therefore, we can assume that the sheaf $M$ is constructible. The result is then exactly Gabber's Artin vanishing Theorem \cite[XV.1.1.2]{travauxgabber}.
\end{proof}

We are now ready for the main theorem of this section.
\begin{theorem}\label{AM.main theorem} Let $(S,\delta)$ be an excellent d-scheme of dimension $2$ or less and let $R$ be a localization of $\Z$, then, the perverse homotopy t-structure on $\DM^A_{\et}(S,R)$ induces a t-structure on $\mc{DM}^A_{\et,c}(S,R)$.
\end{theorem}

In the proof, we will use some notations from \cite[B]{em}.

\begin{definition}\label{p-loc} Let $\mc{C}$ be a $\Z$-linear stable category (meaning that it is enriched over $\D(\Z)$) and let $\mathfrak{p}$ be a prime ideal of $\Z$. 
\begin{enumerate} \item The \emph{naive $\mathfrak{p}$-localization} $\mc{C}_\mathfrak{p}^\mathrm{na}$ of $\mc{C}$ is the category with the same objects as $\mc{C}$ and such that if $M$ and $N$ are objects of $\mc{C}$, we have $$\Map_{\mc{C}_\mathfrak{p}^\mathrm{na}}(M,N)=\Map_\mc{C}(M,N)\otimes_\Z \Z_{\mathfrak{p}}.$$
\item The $\mathfrak{p}$-localization $\mc{C}_{\mathfrak{p}}$ of $\mc{C}$ is the idempotent completion of the naive $\mathfrak{p}$-localization $\mc{C}_\pp^{\mathrm{na}}$ of $\mc{C}$.
\end{enumerate} 
\end{definition}
\begin{emptypar}Let  $\mathfrak{p}$ be a prime ideal of $\Z$.

If $\mc{C}$ is a $\Z$-linear stable category, we have a canonical exact functor $$\mc{C}\rar \mc{C}_{\mathfrak{p}}.$$
and $F\colon \mc{C}\rar \mc{D}$ is an exact functor of such categories, we have a canonical exact functor 
$$F_\mathfrak{p}\colon \mc{C}_\mathfrak{p}\rar \mc{D}_\mathfrak{p}.$$
\end{emptypar}

We can now prove \Cref{AM.main theorem}.
\begin{proof}
We can assume that $S$ is reduced and connected. We proceed by noetherian induction on $S$. If $\dim(S)=0$, the result follows from \Cref{fields}.

Assume now that $\dim(S)>0$. If $\pp$ is a prime ideal of $\Z$, if $X$ is a scheme and if $M$ is an Artin motive over $X$, we denote by $M_\pp$ the image of $M$ in $\DM^A_{\et}(S,R)_\pp$ and we say that an object $N$ of $\DM^A_{\et}(S,R)_\pp$ is constructible when it belongs to $\DM^{A}_{\et,c}(X,R)_\pp$. 
Recall that an Artin motive $M$ is \emph{$\pp$-constructible} when $M_\pp$ is constructible. By \cite[B.1.7]{em}, it suffices to show that for any constructible Artin motive $M$, the Artin motive ${}^{hp}\tau_{\leqslant 0}(M)$ is $\pp$-constructible for any maximal ideal $\pp$ of $\Z$.

Let $\pp$ be a maximal ideal of $\Z$. By \cite[1.1.8]{AM1}, there is a unique t-structure on the stable category $\DM^A_{\et}(S,R)_\pp$ such that the canonical functor $$\DM^A_{\et}(S,R)\rar \DM^A_{\et}(S,R)_\pp$$ 
is t-exact. We still call this t-structure the \emph{perverse homotopy t-structure}. If $M$ is an Artin motive, we get by t-exactness of the functor $M\mapsto M_\pp$ that $${}^{hp}\tau_{\leqslant 0}(M)_\pp={}^{hp}\tau_{\leqslant 0}(M_\pp).$$ 
Hence, if the perverse homotopy t-structure on the stable category $\DM^A_{\et}(S,R)_\pp$ induces a t-structure on the subcategory $\DM^A_{\et,c}(S,R)_\pp$, then the Artin motive ${}^{hp}\tau_{\leqslant 0}(M)$ is $\pp$-constructible for any Artin motive $M$. 
Therefore, it suffices to show that the perverse homotopy t-structure induces a t-structure on the subcategory $\DM^A_{\et,c}(S,R)_\pp$ for any maximal ideal $\pp$ of $\Z$.

The properties of $\DM^A_{\et,(c)}(-,R)$ transfer to $\DM^A_{\et,(c)}(-,R)_\pp$: we get functors of the form $f^*$ for any morphism $f$, functors of the form $\omega^0 f_*$ for any morphism of finite type $f$, functors of the form $f_!$ and $\omega^0 f^!$ for any quasi-finite morphism $f$ and localization triangles. The t-exactness properties of \Cref{AM.t-adj,AM.t-adj2} and Artin vanishing (\Cref{Affine Lefschetz Q-cons}) remain true in this setting.

Let $p$ be a generator of $\pp$. If $p$ is invertible on a regular connected scheme $X$, the functor $\rho_!$ of \Cref{small et site} induces a t-exact fully faithful functor $$\rho_!\colon \Shv_{\lis}(X,R)_\pp\rar \DM_{\et}^A(X,R)_\pp$$ when the left hand side is endowed with the ordinary t-structure shifted by $\delta(X)$ and the right hand side is endowed with the perverse homotopy t-structure. Indeed, the full faithfulness follows from \cite[4.2.8]{AM1} while the t-exactness follows from \Cref{AM.locality} and \cite[1.1.9]{AM1}.

On the other hand, if a regular scheme $X$ is of characteristic $p$, the functor $\rho_!$ induces a t-exact fully faithful functor $$\rho_!\colon \Shv_{\lis}(X,R[1/p])_\pp\rar \DM_{\et}^A(X,R)_\pp$$ when the left hand side is endowed with the ordinary t-structure shifted by $\delta(X)$ and the right hand side is endowed with the perverse homotopy t-structure. Indeed, the full faithfulness follows from \Cref{smooth Artin motives} while the t-exactness follows from \Cref{AM.locality}.

We say that a connected scheme $X$ has the property $\mr{P}$ if one of the two following properties hold:
\begin{itemize}
    \item The scheme $X$ is of characteristic $p$.
    \item The prime number $p$ is invertible on $X$.
\end{itemize}

We say that a scheme $X$ has the property $\mr{P}$ if all its connected components have the property $\mr{P}$. The above discussion implies that in that case, the perverse homotopy t-structure induces a t-structure on $\DM^{smA}_{\et,c}(X,R)_\pp$ and we let $\mathrm{M}_{\mathrm{perv}}^{smA}(X,R)_{\pp}$ be the heart of this t-structure. 

Notice furthermore that if $F$ is any closed subscheme of $S$ which is of positive codimension, the induction hypothesis and \cite[1.1.8]{AM1} imply that the perverse homotopy t-structure induces a t-structure on $\DM^A_{\et,c}(F,R)_\pp$. We let $\mathrm{M}_{\mathrm{perv}}^{A}(F,R)_{\pp}$ be the heart of this induced t-structure. 

Using \Cref{AM.outil 1} below, it suffices to show that if $j\colon U\rar S$ is a dense affine open immersion with $U$ regular and having the property $\mr{P}$, letting $i\colon F\rar S$ be the reduced closed complementary immersion, for any object $M$ of $\mathrm{M}_{\mathrm{perv}}^{A}(F,R)_{\pp}$, any object $N$ of $\mathrm{M}_{\mathrm{perv}}^{smA}(U,R)_{\pp}$ and any map $$f\colon M\rar \Hlh^{-1}(\omega^0 i^*j_*N),$$ 
the kernel $K$ of $f$ is a constructible Artin motive.

If $\dim(S)=1$, the scheme $F$ is $0$-dimensional. For any profinite group $G$ and any noetherian ring $\Lambda$, the subcategory $\Rep^A(G,\Lambda)$ of $\mathrm{Mod}(G,\Lambda)$ is Serre (meaning that it is closed under sub-quotients and extensions). Therefore, \Cref{fields} and \Cref{smooth Artin motives} imply that the subcategory $\mathrm{M}_{\mathrm{perv}}^{A}(F,R)_{\pp}$ of $\DM^A_{\et}(F,R)_\pp^\heart$ is Serre.
Since $K$ is a subobject of $M$, it is an object of $\mathrm{M}_{\mathrm{perv}}^{A}(F,R)_{\pp}$ and thus it constructible.

If $\dim(S)=2$, the scheme $F$ is of dimension at most $1$. 
By \Cref{AM.outil 2} below, we have an exact sequence
$$0\overset{u}{\rar} M_1\rar \Hlh^{-1}(\omega^0 i^*j_*N)\overset{v}{\rar} M_2$$ in $\DM^A_{\et}(F,R)_\pp^\heart$ such that $M_1$ and $M_2$ are perverse Artin motives. Let $K_1$ be the kernel of the map $$M\overset{f}{\rar} \Hlh^{-1}(\omega^0 i^*j_*N)\rar M_2,$$ it is a perverse Artin motive.
Furthermore, the map $f_0\colon K_1\rar \Hlh^{-1}(\omega^0 i^*j_*N)$ induced by $f$ factors through $M_1$ since $$u\circ f_0=0$$ and the kernel of the induced map $$K_1\rar M_1$$ is $K$ which is therefore a perverse Artin motive.
\end{proof}
\begin{lemma}\label{AM.outil 1} Let $S$ be a reduced excellent scheme, let $R$ be a localization of $\Z$, let $\pp=(p)$ be a maximal ideal of $\Z$. 
Assume that:
\begin{enumerate}[label=(\roman*)]\item The perverse homotopy t-structure on $\DM^A_{\et}(F,R)_{\pp}$ induces a t-structure on the subcategory $\DM^A_{\et,c}(F,R)_{\pp}$ for any closed subscheme $F$ of $S$ which is of positive codimension.
\item If $j\colon U\rar S$ is a dense affine open immersion with $U$ regular and having the property $\mr{P}$, letting $i\colon F\rar S$ be the reduced closed complementary immersion, for any object $M$ of $\mathrm{M}^A_{\mathrm{perv}}(F,R)_{\mathfrak{p}}$, any object $N$ of $\mathrm{M}_{\mathrm{perv}}^{smA}(U,R)_{\pp}$ and any map $$f\colon M\rar \Hlh^{-1}(\omega^0i^*j_*N),$$ the kernel of $f$ is a perverse Artin motive on $F$.
\end{enumerate}

Then, the perverse homotopy t-structure induces a t-structure on $\DM^A_{\et,c}(S,R)_\pp$.
\end{lemma}
\begin{proof} 
If $j\colon U\rar S$ is an open immersion, denote by $\DM^A_{\et,c,U}(S,R)_\pp$ the subcategory of $\DM^A_{\et,c}(S,R)_\pp$ made of those objects $M$ such that $j^*M$ belongs to $\DM^{smA}_{\et,c}(U,R)_\pp$. Using \cite[3.5.1]{AM1}, every object of $\DM^A_{\et,c}(S,R)_\pp$ lies in some $\DM^A_{\et,c,U}(S,R)_\pp$ for some  dense open immersion $j\colon U\rar S$. Shrinking $U$ if needed, we can assume that $U$ is regular since $S$ is excellent. Shrinking $U$ further, we can assume that it has the property $\mr{P}$. Finally, using 
\cite[3.3.5]{aps}, 
we may assume that the morphism $j$ is affine.

Therefore, it suffices to show that for any dense affine open immersion $j\colon U\rar S$ with $U$ regular and having the property $\mr{P}$, the perverse homotopy t-structure induces a t-structure on $\DM^A_{\et,c,U}(S,R)_\pp$. Let now $j\colon U\rar S$ be such an immersion, let $i\colon F\rar S$ be the reduced closed complementary immersion. We let \begin{itemize}
\item $\mc{S}=\DM^A_{\et}(S,R)_\pp$,
\item $\mc{U}=\DM^A_{\et}(U,R)_\pp$ and $\mc{U}_0=\DM^{smA}_{\et,c}(U,R)_\pp$,
\item $\mc{F}=\DM^A_{\et}(F,R)_\pp$ and $\mc{F}_0=\DM^A_{\et,c}(F,R)_\pp$.
\end{itemize}

The category $\mc{S}$ is a gluing of the pair $(\mc{U},\mc{F})$ along the fully faithful functors $\omega^0 j_*$ and $i_*$ in the sense of \cite[A.8.1]{ha} and the perverse homotopy t-structure on $\mc{S}$ is obtained by gluing the perverse t-structures of $\mc{U}$ and $\mc{F}$ in the sense of \cite[1.4.10]{bbd} by \Cref{glueing perverse}. In addition, the perverse homotopy t-structure induces a t-structure on $\mc{U}_0$ and on $\mc{F}_0$ by assumption. Furthermore, since the morphism $j$ is affine, the functor $j_!\colon \mc{U}\rar \mc{S}$ is t-exact.

Finally \Cref{boundedness} ensures that the objects of $\DM^A_{\et,c}(S,R)_\pp$ are bounded with respect to the perverse homotopy t-structure. We conclude by using \cite[3.3.6]{aps}
\end{proof}

\begin{lemma}\label{AM.outil 2} Let $S$ be a reduced excellent connected scheme of dimension $2$, let $R$ be a localization of $\Z$, let $j\colon U\rar S$ be a dense affine open immersion with $U$ regular and having property $\mr{P}$, let $i\colon F\rar S$ be the reduced closed complementary immersion and let $N$ be an object of $\mathrm{M}_{\mathrm{perv}}^{smA}(U,R)_{\pp}$. 

Then, the object $\Hlh^{-1}(\omega^0 i^*j_* N)$ fits in $\DM^A_{\et}(F,R)_\pp^\heart$ into an exact sequence of the form $$0\rar M_1\rar \Hlh^{-1}(\omega^0 i^*j_* N) \rar M_2$$ where $M_1$ and $M_2$ belong to $\mathrm{M}^A_{\mathrm{perv}}(F,R)_{\mathfrak{p}}$.
\end{lemma}
\begin{proof} We use the convention that $\delta(S)=2$.
Replacing $S$ with the closure of a connected component of $U$, we may assume that $U$ is connected. 
Thus, since $U$ is regular and has property $\mr{P}$, we have $$\mathrm{M}_{\mathrm{perv}}^{smA}(U,R)_{\pp}=\rho_!\Loc_U(\Lambda_0)_\pp[2]$$ where $\Lambda_0=R$ if $p$ is invertible on $U$ and $\Lambda_0=R[1/p]$ if $U$ is of characteristic $p$.

Notice that the result holds when the ring $R$ is torsion by \Cref{omega^0 torsion} and therefore, we can assume that the ring $R$ is the localization of $\Z$. 

Denote by $\Lambda$ the ring $\Lambda_0\otimes_\Z \Z_\pp$. 
By \cite[3.1.12 \& 1.1.8]{AM1}, the canonical functor $$\Loc_U(\Lambda_0)_\pp\rar \Loc_U(\Lambda)$$ is an equivalence.

\textbf{Step 1:} 
Let $\xi$ be a geometric point of $U$ and let $$\varepsilon_!\colon \Rep^A(\pi_1^{\et}(U,\xi),\Lambda) \rar \Loc_U(\Lambda)$$ be the inverse of the equivalence of \cite[3.1.7]{AM1}, let $\alpha_!=\varepsilon_!\rho_!$ (this is the same functor as in \Cref{notations alpha_!}) and let $\alpha^!$ be the inverse of $\alpha_!$.

Let $P=\alpha^!N[-2]$ and let $P_{\mathrm{tors}}$ be the sub-representation of $P$ made of the torsion elements. There is a positive integer $n$ such that $p^nP_{\mathrm{tors}}$ vanishes; moreover, the $\Lambda[\pi_1^{\et}(U,\xi)]$-module $P_{\mathrm{tors}}$ is an Artin representation. Therefore, the $\Lambda[\pi_1^{\et}(U,\xi)]$-module $P/P_{\mathrm{tors}}$ is also an Artin representation and it is torsion free.

Using the Artin vanishing theorem (\Cref{Affine Lefschetz Q-cons}), the functor $\omega^0 i^*j_*$ is of perverse homotopy cohomological amplitude bounded below by $-1$. Therefore, the exact triangle $$\omega^0 i^*j_* \alpha_!P_{\mathrm{tors}}[2] \rar \omega^0 i^*j_* N \rar \omega^0 i^*j_* \alpha_!(P/P_{\mathrm{tors}})[2]$$ induces an exact sequence 
$$0\rar \Hlh^{1}(\omega^0 i^*j_* \alpha_!P_{\mathrm{tors}}) \rar \Hlh^{-1}(\omega^0 i^*j_* N) \rar \Hlh^{1}(\omega^0 i^*j_* \alpha_!(P/P_{\mathrm{tors}})).$$ 
Using \Cref{omega^0 torsion}, the object $\Hlh^{1}(\omega^0 i^*j_* \alpha_! P_{\mathrm{tors}})$ lies in  $\mathrm{M}^A_{\mathrm{perv}}(F,R)_{\mathfrak{p}}$ and therefore, it suffices to prove that the object $\Hlh^{1}(\omega^0 i^*j_* \alpha_! (P/P_{\mathrm{tors}}))$ of $\DM^A_{\et}(F,R)_\pp^\heart$ is a subobject of an object of $\mathrm{M}^A_{\mathrm{perv}}(F,R)_{\mathfrak{p}}$.

Since the Artin representation $P/P_{\mathrm{tors}}$ is torsion free, the canonical map $$P/P_{\mathrm{tors}}\rar P/P_{\mathrm{tors}}\otimes_{\Lambda}\Q$$ is injective. The category $\Rep^A(\pi_1^{\et}(U,\xi),\Q)$ is semi-simple by Maschke's lemma. 
Therefore, the representation $P/P_{\mathrm{tors}}\otimes_{\Lambda}\Q$ is a direct factor of a representation $$Q_\Q:=\Q[G_1]\bigoplus \cdots \bigoplus \Q[G_n]$$ where $G_1,\ldots,G_n$ are finite quotients of $\pi_1^{\et}(U,\xi)$. 

Denote by $\lambda_\Q$ the map $P/P_{\mathrm{tors}}\otimes_{\Lambda}\Q\rar Q_\Q$. There is an integer $m$ such that the image of $P/P_{\mathrm{tors}}$ in $Q_\Q$ is contained in the lattice $$Q_0:=\frac{1}{p^m}\left(\Lambda[G_1]\bigoplus \cdots \bigoplus \Lambda[G_n]\right)$$ in $Q_\Q$. The representation $Q_0$ is isomorphic to $\Lambda[G_1]\bigoplus \cdots \bigoplus \Lambda[G_n]$. Thus, we get an embedding $$P/P_{\mathrm{tors}}\rar \Lambda[G_1]\bigoplus \cdots \bigoplus \Lambda[G_n].$$

Since the functor $\omega^0 i^*j_*$ is of perverse homotopy cohomological amplitude bounded below by $-1$, we get an embedding $$\Hlh^{1}(\omega^0 i^*j_* \alpha_! (P/P_{\mathrm{tors}}))\rar \bigoplus_{i=1}^n \Hlh^{1}(\omega^0 i^*j_* \alpha_! \Lambda[G_i]).$$
Furthermore, if $1\leqslant i\leqslant n$, the motive $\alpha_! \Lambda[G_i]$ is of the form $f_*\un_V$ where $f\colon V\rar U$ is a finite étale map. 
Therefore, it suffices to show that the following claim: "If $f\colon V\rar U$ is a finite étale map, and if $N=f_*\un_V$, the object $\Hlh^{1}(\omega^0 i^*j_* N)$ belongs to $\mathrm{M}^A_{\mathrm{perv}}(F,R)_{\mathfrak{p}}$."

\textbf{Step 2:} In this step we show that to prove the above claim, we can assume $S$ to be normal and that $N=\un_U$. 

Let $\overline{V}$ be the relative normalization of $S$ in $V$. Since $S$ is excellent and noetherian, it is Nagata by \cite[033Z]{stacks}. Thus, \cite[0AVK]{stacks} ensures that the structural map $\nu:\overline{V}\rar S$ is finite. Furthermore, the scheme $\overline{V}$ is normal by \cite[035L]{stacks}.
Consider the following commutative diagram
$$\begin{tikzcd} V \ar[d,"f"]\ar[r,"\gamma"] & \overline{V} \ar[d,"\nu"]& F' \ar[l,sloped,"\iota"] \ar[d,"\nu_F"] \\
U \ar[r,"j"]& S & F \ar[l,sloped,"i"]
\end{tikzcd}$$
made of cartesian squares.
Now, we have $$\omega^0 i^*j_*N=\omega^0i^*j_*f_*\un_V=\omega^0 i^*\nu_* \gamma_*\un_V=(\nu_F)_* \omega^0 \iota^*\gamma_*\un_V.$$
Since $\nu_F$ is finite, it is perverse homotopy t-exact. Thus, we have $$\Hlh^{1}(\omega^0i^*j_* N)= (\nu_F)_*\Hlh^{1}(\omega^0\psi^*\gamma_* \un_V).$$
Therefore, we can replace $S$ with $\overline{V}$ and $U$ with $V$ in order to assume that $S$ is normal and that $N=\un_U$.

\textbf{Step 3:} In this step, we study the singularities of $S$. 

First, notice that if $Y\rar S$ is a closed immersion and if $F \cap Y=\varnothing$, we can factor $i$ into a closed immersion $u\colon F\rar S\setminus Y$ and an open immersion $\xi\colon S\setminus Y \rar S$. Letting $j'\colon U\setminus Y\rar S \setminus Y$ be the canonical immersion, smooth base change implies that $$\omega^0i^*j_*\un_U=\omega^0 u^* \xi^*j_*\un_U=\omega^0 u^* j'_*\un_{U\setminus Y}.$$

We can therefore remove any closed subset which does not intersect $F$ from $S$ without changing $\Hlh^{1}(\omega^0 i^*j_* \un_U).$
Since $S$ is normal, its singular locus lies in codimension $2$ and we can therefore assume that it is contained in $F$. 

Since $S$ is excellent, Lipman's Theorem on embedded resolution of singularities applies (see \cite[0BGP,0BIC,0ADX]{stacks} for precise statements). We get a cdh-distinguished square

$$\begin{tikzcd}
    E\ar[r,"i_E"]\ar[d,"p"] &\widetilde{S}\ar[d,"f"]\\
    F \ar[r,"i"] & S
\end{tikzcd}$$
such that the map $f$ induces an isomorphism over $U$, the scheme $\widetilde{S}$ is regular and the subscheme $E$ of $\widetilde{S}$ is a simple normal crossing divisor.

Let $\gamma\colon U\rar \widetilde{S}$ be the complementary open immersion of $i_E$. We have $j=f\circ \gamma$. 
Therefore, we get $$\omega^0 i^*j_* \un_{U}=\omega^0 i^*f_*\gamma_*\un_U=\omega^0 p_* i_E^*\gamma_*\un_{U}.$$

We have a localization exact triangle
$$(i_E)_*i_E^!\un_{\widetilde{S}}\rar \un_{\widetilde{S}}\rar \gamma_* \un_{U},$$
applying $\omega^0 p_*i_E^*$, we get an exact triangle
$$\omega^0 p_*i_E^!\un_{\widetilde{S}}\rar \omega^0 p_*\un_{E}\rar \omega^0 i^*j_* \un_{U}$$
and therefore, we get an exact sequence 
$$\Hlh^1\left(\omega^0 p_*i_E^!\un_{\widetilde{S}}\right)\rar \Hlh^1(\omega^0 p_*\un_{E})\rar \Hlh^1(\omega^0 i^*j_* \un_{U})\rar \Hlh^2\left(\omega^0 p_*i_E^!\un_{\widetilde{S}}\right)$$
and an equivalence $$\Hlh^0\left(\omega^0 p_*i_E^!\un_{\widetilde{S}}\right)= \Hlh^0(\omega^0 p_*\un_{E})$$ (to get this last equivalence, we use the Artin vanishing theorem).

Since $E$ is a simple normal crossing divisor, there is a finite set $I$ and regular 1-dimensional closed subschemes $E_i$ of $\widetilde{X}$ for $i\in I$, such that $E=\bigcup\limits_{i \in I} E_i$. Choose a total order on the finite set $I$. If $i<j$, write $E_{ij}=E_i\cap E_j$. Consider for $J\subseteq I$ of cardinality at most $2$ the obvious diagram: 
$$\begin{tikzcd}
    E_J \ar[r,"u_{E_J}"] \ar[rd,swap,"p_J"] & E \ar[d,"p"] \\
    & F
\end{tikzcd}$$

By cdh-descent and absolute purity, we have an exact triangle 
$$\bigoplus_{i<j} (u_{E_{ij}})_* \un_{E_{ij}}(-2)[-4] \rar \bigoplus_{i \in I} (u_{E_i})_* \un_{E_i}(-1)[-2] \rar i_E^!\un_{\widetilde{S}}$$
and applying $\omega^0p_*$ yields an exact triangle 
$$\bigoplus_{i<j} \omega^0\left((p_{ij})_* \un_{E_{ij}}(-2)\right)[-4]\rar \bigoplus_{i \in I} \omega^0\left((p_i)_* \un_{E_i}(-1)\right)[-2] \rar \omega^0p_*i_E^!\un_{\widetilde{S}}.$$

Let $J\subseteq I$ be of cardinality at most $2$ and let $c$ be the cardinality of $J$. We have an exact triangle 
$$\omega^0\left((p_J)_* (\un_{E_J}\otimes_\Z \Q/\Z)(-c)\right)[-1]\rar \omega^0\left((p_J)_*\un_{E_J}(-c)\right)\rar \omega^0\left((p_J)_* (\un_{E_J}\otimes_\Z \Q)(-c)\right)$$

\cite[3.9]{plh} ensures that the motive $\omega^0\left((p_J)_* (\un_{E_J}\otimes_\Z \Q)(-c)\right)$ vanishes. Therefore, using \Cref{omega^0 torsion}, we get an equivalence $$\omega^0\left((p_J)_*\un_{E_J}(-c)\right)=(p_J)_* (\un_{E_J}\otimes_\Z \Q/\Z)(-c)[-1]$$

But then, using \Cref{perverse homotopy torsion} and the usual properties of the perverse t-structure described in \cite[4.2.4]{bbd}, we get $$(p_J)_* (\un_{E_J}\otimes_\Z \Q/\Z)(-c)\geqslant_{hp} 0.$$ Therefore, we have $$\omega^0\left((p_J)_*\un_{E_J}(-c)\right)\geqslant_{hp}1$$ 
and thus, we have $$\omega^0\left((p_J)_*\un_{E_J}(-c)\right)[-2c]\geqslant_{hp}1+2c$$ which yields $$\omega^0p_*i_E^!\un_{\widetilde{S}}\geqslant_{hp} 3.$$ 

Hence, we get an equivalence $$\Hlh^1(\omega^0 p_*\un_{E})= \Hlh^1(\omega^0 i^*j_* \un_{U})$$
and the motive $\Hlh^0(\omega^0 p_*\un_{E})$ vanishes.

\textbf{Step 4:} In this step we compute the motive $\Hlh^1(\omega^0 p_*\un_{E})$ and we finish the proof. By cdh-descent, we have an exact triangle

$$\un_E \rar\bigoplus_{i \in I}(u_{E_i})_*(u_{E_i})^* \un_E \rar  \bigoplus\limits_{i< j} (u_{E_{ij}})_*(u_{E_{ij}})^* \un_E.$$
and applying $\omega^0 p_*$, we get an exact triangle
$$\omega^0 p_* \un_E \rar\bigoplus_{i \in I}\omega^0(p_i)_* \un_{E_i} \rar  \bigoplus\limits_{i< j} (p_{ij})_* \un_{E_{ij}}.$$

Let $$I_0=\{ i \in I \mid \delta(p(E_i))=0\}.$$ 
If $i \in I\setminus I_0$, we have $\delta(E_i)=1$ and the morphism $p_i$ is finite. If $i\in I_0$, the map $p_i$ factors through the singular locus $Z$ of $S$ which is $0$-dimensional and we can write $p_i=\iota \circ \pi_i$ with $\iota\colon Z\rar S$ the closed immersion. Finally, notice that $\delta(Z)=0$ and thus, for any $i<j$, we have $\delta(E_{ij})=0$. Hence, since the motive $\Hlh^0(\omega^0 p_*\un_{E})$ vanishes, we get an exact sequence 
$$0\rar P \rar \bigoplus\limits_{i< j} (p_{ij})_* \un_{E_{ij}} \rar  \Hlh^1(\omega^0 p_* \un_E) \rar\bigoplus_{i \in I\setminus I_0}(p_i)_* \un_{E_i} \bigoplus Q \rar 0 $$
where $P=\iota_*\bigoplus_{i \in I_0}\Hlh^0(\omega^0(\pi_i)_* \un_{E_i})$ and $Q=\iota_*\bigoplus_{i \in I_0}\Hlh^1(\omega^0(\pi_i)_* \un_{E_i})$. 

If $i\in I_0$, let $$E_i\rar F_i \overset{q_i}{\rar} Z$$ be the Stein factorization of $\pi_i$. \Cref{omega^0 corps} yields 
$$P=\iota_*\bigoplus_{i \in I_0}(q_i)_* \un_{F_i}$$ and $$Q=0.$$

Hence, the objects $P$ and $Q$ belong to $\mathrm{M}^A_{\mathrm{perv}}(F,R)_{\mathfrak{p}}$ and therefore, so does the object $\Hlh^1(\omega^0 p_* \un_E)$ which finishes the proof.
\end{proof}

\begin{proposition}\label{Affine Lefschetz} Let $S$ be an excellent scheme of dimension $2$ or less, let $f\colon X\rar S$ be a quasi-finite affine morphism of schemes and let $R$ be a regular ring. Assume that $X$ is nil-regular.

Then, the functor $$f_!\colon \DM^{smA}_{\et,c}(X,R)\rar \DM^A_{\et,c}(S,R)$$ is t-exact when both sides are endowed with the perverse homotopy t-structure.
\end{proposition}
\begin{proof} This follows from \Cref{Affine Lefschetz Q-cons}.    
\end{proof}

\begin{example}\label{superbe exemple} Let $X$ be a normal scheme of dimension $4$ with a single singular point $x$ of codimension $4$, let $k$ be the residue field at $x$, let $p$ be the characteristic exponent of $k$ and let $f\colon \widetilde{X}\rar X$ be a resolution of singularities of $X$.

Assume that the exceptional divisor above $x$ is isomorphic to a smooth $k$-scheme $E$ such that $b_1(E,\ell)\neq 0$ for some prime number $\ell$ distinct from the characteristic exponent of $E$ and that the ring of coefficients is $\Z$. Take the convention that $\delta(X)=4$.

We have a cdh-distinguished square
$$\begin{tikzcd}E \ar[r,"i_E"] \ar[d,"\pi"] & \widetilde{X}\ar[d,"f"]\\
\Spec(k) \ar[r,"i"] & X
\end{tikzcd}$$

which yields an exact triangle
$$\un_X\rar \omega^0 f_*\un_{\widetilde{X}} \oplus i_* \un_k \rar i_* \omega^0 \pi_*\un_E.$$

Furthermore, letting $U=X\setminus\{ x\}= \widetilde{X}\setminus E$ and letting $j\colon X\setminus\{ x\}\rar X$ be the open immersion, the localization property \eqref{AM.colocalization} yields an exact triangle
$$i_* \omega^0 \pi_* (i_E)^! \un_{\widetilde{X}} \rar \omega^0 f_* \un_{\widetilde{X}} \rar \omega^0j_* \un_U.$$

Since by \Cref{AM.t-adj,delta(S)}, the Artin motive $\omega^0j_* \un_U$ lies in degree at least $4$, we get that $$\Hlh^i(\omega^0 f_*\un_{\widetilde{X}})=i_* \Hlh^i(\omega^0 \pi_* (i_E)^! \un_{\widetilde{X}})$$
for $i<4$. 

Using the absolute purity property, we have $$\omega^0 \pi_* (i_E)^! \un_{\widetilde{X}}=\omega^0 \left( \pi_* \un_{E}(-1)\right)[-2]$$ and therefore, by \Cref{omega^0 corps}, we get $$\Hlh^i(\omega^0 f_*\un_{\widetilde{X}})=\begin{cases} 0 &\text{ if }i<3 \\
i_* \left(\alpha_!\hspace{1mm} \Q/\Z[1/p](-1) \right) &\text{ if }i=3,
\end{cases}
$$
using \Cref{notations alpha_!}.
On the other hand, we have $$\Hlh^2(\omega^0 \pi_*\un_E)=i_*\alpha_! \hspace{1mm} h^1(E,\Z)$$ by \Cref{omega^0 corps}. Moreover, the Artin motive $i_* \un_k$ is in the heart of the perverse homotopy t-structure. 
Hence, we get an exact sequence $$0\rar i_* \alpha_! \mu_1(E,\Z) \rar \Hlh^3(\un_X)\rar i_* M\rar 0,$$ where $M$ is the kernel of the map $\Hlh^3(\omega^0\pi_* (i_E)^! \un_{\widetilde{X}})\rar \Hlh^3(\omega^0\pi_* \un_E)$. 
Thus, the motive $\Hlh^3(\un_X)$ is in the image of the functor $i_*$.
Since the representation $h^1(E,\Z)$ of $G_k$ is not of finite type as a $\Z$-module (it has $\Z(\ell^\infty)^{b_1(E,\ell)}$ as a sub-module), the Artin motive $\Hlh^3(\un_X)$ cannot be constructible.
\end{example}
\begin{example}\label{exemple dim 4} Let $X$ be the affine cone over an abelian threefold $E$ embedded in some $\mb{P}^N$. The scheme $X$ has a single singular point $x$ and the blow up of $X$ at $x$ is a resolution of singularities of $X$ with exceptional divisor $E$.

By Noether's normalization lemma, there is a finite map $X\rar \mb{A}^4_k$. Thus, the perverse homotopy t-structure does not induce a t-structure on $\mc{DM}^A_{\et,c}(\mb{A}^4_k,\Z)$. As $\mb{A}^4_k$ is a closed subscheme of $\mb{A}^n_k$ for $n\geqslant 4$, the perverse t-structure does not induce a t-structure on $\mc{DM}^A_{\et,c}(\mb{A}^n_k,\Z)$ if $n \geqslant 4$.
\end{example}

\begin{remark} Loosely speaking, if the scheme $X$ becomes more singular, the perverse homotopy cohomology sheaves of $\un_X$ should become more complicated. Here, the simplest possible singularity on a scheme of dimension $4$ over a finite field already renders the cohomology sheaves not constructible. Therefore, \Cref{AM.main theorem} should not hold for schemes of dimension $4$ or more. The case of $3$-folds, however, remains open.
\end{remark}

\subsection{Gluing and t-exactness of the realization}
The following description of perverse Artin motives is very close to that of Artin perverse sheaves from \cite{aps}. It is very loosely inspired by Beilinson's gluing method from \cite{howtoglue}. 

Let $M$ be an Artin perverse motive over an excellent base scheme $S$ of dimension at most $2$. \cite[3.5.1]{AM1} gives a stratification of $S$ with regular strata and such that for any stratum $T$, the motive $M|_T$ is a constructible smooth Artin motive. We can recover the motive $M$ from the motives $M|_T$ and additional data.

We take the convention that $\delta(S)=2$ and assume for simplicity that the scheme $S$ is reduced. Let $U$ be the (disjoint) union of the strata containing the generic points of $S$. Without loss of generality, we can assume that $U$ is a single stratum over which the motive $M$ a constructible smooth Artin motive. Shrinking the open subset $U$, we can also assume that the immersion $j\colon U\rar S$ is affine. Write $i\colon F\rar S$ the reduced complementary closed immersion. The motive $M|_U$ is a perverse Artin motive and the constructible Artin motive $M|_F$ is in perverse degree $[-1,0]$.

By localization, we have an exact triangle $$i_*M|_F[-1] \rar j_!M|_U\rar M.$$

Thus, by \Cref{Affine Lefschetz}, we have an exact sequence $$0\rar i_*\Hlh^{-1}M|_F\rar j_! M|_U\rar M \rar i_* \Hlh^0 M|_F\rar O$$
Hence, we can recover $M$ as the cofiber of a map $i_*M|_F[-1]\rar j_!M|_U$ such that the induced map $i_*\Hlh^{-1}M|_F\rar j_!M|_U$ is a monomorphism. 

Conversely, a smooth Artin perverse sheaf $M_U$ over $U$, a constructible Artin motive $M_F$ in perverse degrees $[-1,0]$ over $F$ and a connecting map $i_*M_F[-1]\rar j_!M_U$ such that the induced map $i_*\Hlh^{-1}M_F\rar j_*M_U$ is a monomorphism, give rise to a unique perverse Artin motive $M$ over $S$.

Now, recall that we have an exact triangle

$$j_!\rar \omega^0j_* \rar i_* \omega^0i^* j_*$$ and that $j^*i_*=0$. Therefore, we have an exact sequence 
$$0\rar i_*\Hlh^{-1}(\omega^0 i^*j_* M_U) \rar j_! M_U \rar \Hlh^0(\omega^0 j_* M_U) \rar i_* \Hlh^0(\omega^0 i^*j_* M_U)\rar 0.$$
and we have $$\Hom(i_*M_F[-1],j_!M_U)=\Hom(M_F,\omega^0 i^*j_* M_U).$$

Therefore, if $\phi\colon i_*M_F[-1]\rar j_!M_U$ is any map, the map $\Hlh^0(\phi)$ is a monomorphism if and only if the induced map $$\Hlh^{-1}(M_F)\rar \Hlh^{-1}(\omega^0 i^*j_* M_U)$$ is a monomorphism.

Similarly, since the stratification of $S$ induces a stratification of $F$, the Artin motive $M_F$ is uniquely encoded by the data of a smooth constructible Artin motive $M_1$ concentrated in perverse degrees $[-1,0]$ over an affine open subset $U_1$ of $F$, a constructible Artin motive $N_1$ in perverse degrees $[-2,0]$ on $F_1=F\setminus U_1$ and a connecting map with a similar condition.

This discussion provides the following proposition:

\begin{proposition}\label{description d=2} Let $S$ be an excellent scheme of dimension $2$ or less and let $R$ be a regular ring. Then, perverse Artin étale motive over $S$ are uniquely encoded by the following data.

\begin{itemize}
    \item A triple $(U_0,U_1,U_2)$ of locally closed subschemes of $S$ such that each $U_k$ is nil-regular, purely of codimension $k$, is open in the scheme $F_{k-1}:=S\setminus (U_0\cup \cdots \cup U_{k-1})$ and such that the open immersion $$j_k\colon U_k\rar F_{k-1}$$ is affine (take the conventions that $F_{-1}=S$). Denote by $$i_k\colon F_k\rar F_{k-1}$$ the closed immersion.
    \item A triple $(M_0,M_1,M_2)$ of constructible smooth Artin motives on $U_i$ such that each $M_i$ is placed in perverse degree $[-i,0]$.
    \item Two connecting maps 
    	$$ \begin{cases} \phi_2\colon M_{2}\rar \omega^0 i_1^*(j_1)_*M_1 \\
    	\phi_1\colon M_{12} \rar \omega^0 i_0^*(j_0)_*M_0
    	\end{cases}$$ 
such that $\Hlh^{-k}(\phi_k)$ is a monomorphism and where $M_{12}$ is the cofiber of the map $$(i_1)_* M_{2}[-1]\rar (j_1)_! M_1$$ induced by $\phi_2$.
    \end{itemize}
\end{proposition}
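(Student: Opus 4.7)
The plan is to apply Beilinson's gluing twice, exactly along the lines sketched in the paragraphs preceding the statement. Given a perverse Artin motive $M$ on $S$, I first invoke Proposition \ref{AM.stratification} to find a stratification of $S$ over which $M$ is smooth, and then refine this stratification so as to obtain nil-regular strata purely of codimensions $0$, $1$, $2$ with $j_0$ and $j_1$ affine (shrinking and using \cite[3.3.5]{aps} to replace open immersions by affine ones; excellence of $S$ allows nil-regularity). I set $M_0 = j_0^* M$, $M_1 = j_1^* i_0^* M$, and $M_2 = i_1^* i_0^* M$, and I must prove that these lie in the stated perverse degree ranges and that the two connecting maps $\phi_1$, $\phi_2$ with monomorphism conditions determine $M$ uniquely.

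For the first gluing, using the localization triangle $(j_0)_! j_0^* \to \id \to (i_0)_* i_0^*$ and the exact sequence of Proposition \ref{basic prop of PAM}(5), the fact that $M$ is perverse forces $i_0^* M$ to sit in perverse degrees $[-1,0]$, and $M$ is recovered as the cofiber of a morphism $(i_0)_* (i_0^*M)[-1] \to (j_0)_! M_0$ such that the induced map on $\Hlh^{-1}$ is injective. Composing with the canonical morphism $(j_0)_! \to \omega^0 (j_0)_*$, whose cofiber is $(i_0)_* \omega^0 i_0^* (j_0)_*$ by Proposition \ref{omega^0 prt}, transforms this datum into the connecting map $\phi_1 \colon i_0^* M \to \omega^0 i_0^*(j_0)_* M_0$ of the statement, and the injectivity of $\Hlh^{-1}$ translates verbatim into the monomorphism condition. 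The converse (that any such $(M_0, i_0^*M, \phi_1)$ with $\Hlh^{-1}(\phi_1)$ injective yields a perverse motive) is verified via the locality criterion of Proposition \ref{AM.locality}.

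For the second gluing I iterate on $F_0$, which is of dimension at most $1$, so Theorem \ref{AM.main theorem} provides the perverse homotopy t-structure on $\mc{DM}^A_{\et,c}(F_0, R)$ and the gluing formalism applies. The object $i_0^* M$ lives in perverse degrees $[-1,0]$; restricting to $U_1$ gives $M_1$ in the same range (as $j_1^*$ is t-exact), and the cofiber triangle $(j_1)_! M_1 \to i_0^* M \to (i_1)_* M_2$ combined with the right t-exactness of $(j_1)_!$ (Affine Lefschetz, Proposition \ref{lefschetz affine 1Q}) and of $(i_1)^*$ forces $M_2 \in [-2,0]$. The same gluing argument as above (now between $U_1$ and $F_1$ inside $F_0$) identifies $i_0^* M$ with the object $M_{12}$ built from $\phi_2$ and the monomorphism condition on $\Hlh^{-2}(\phi_2)$; since $F_1$ is $0$-dimensional the interval simply shifts by one.

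The main obstacle will be the careful bookkeeping of perverse degrees in the second step, in particular establishing that $M_2$ really lies in $[-2,0]$ and that the ``monomorphism on $\Hlh^{-2}$'' condition on $\phi_2$ corresponds, via the triangle $(j_1)_! \to \omega^0 (j_1)_* \to (i_1)_* \omega^0 i_1^*(j_1)_*$, to the exactness needed for the cofiber $M_{12}$ to sit in degrees $[-1,0]$ so that the first-step gluing can be applied. Once these t-amplitude computations are in place, the equivalence of categories follows formally from two successive invocations of \cite[1.4.22]{bbd} as extended to our setting by Proposition \ref{glueing perverse} and Proposition \ref{main thm rationnel}(2).
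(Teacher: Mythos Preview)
Your proposal is correct and follows essentially the same two-step Beilinson gluing argument that the paper presents in the discussion immediately preceding the proposition (that discussion \emph{is} the paper's proof). One small citation issue: you invoke Proposition~\ref{main thm rationnel}(2) for the gluing formalism on constructible objects, but that result is stated only for coefficients in a number field; for a general regular good ring $R$ over a base of dimension $\leqslant 2$ the relevant input is Theorem~\ref{AM.main theorem} together with Proposition~\ref{glueing perverse} and Proposition~\ref{basic prop of PAM}(5), exactly as the paper uses them.
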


\begin{remark} If $\dim(S)\leqslant 1$, $U_2=\emptyset$, $M_2=0$ and the data of $\phi_1$ is trivial so that $M_{12}=M_1$. 
\end{remark}

This description yields the t-exactness of the $\ell$-adic realization functor.

\begin{theorem}\label{t-ex of l-adic real} Let $(S,\delta)$ be an excellent d-scheme of dimension $2$ or less, let $R$ be a localization of $\Z$, let $\ell$ be a prime number. Then, the reduced $\ell$-adic realization functor 
$$\overline{\rho}_\ell\colon \DM_{\et,c}^A(S,R)\rar \mc{D}^b_c(S[1/\ell],R\otimes_\Z \Z_\ell)$$ of \Cref{reduced l-adic real} is t-exact when the left hand side is endowed with the perverse homotopy t-structure and the right hand side is endowed with the perverse t-structure.
\end{theorem}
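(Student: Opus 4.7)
The plan is to verify t-exactness through the local characterizations of both t-structures from \Cref{AM.locality} and \Cref{caracterisation perverse}, reducing the problem to t-exactness on fields via the compatibility of $\bar{\rho}_v$ with the six functors and with the Artin truncation functor $\omega^0$.

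First I would address right t-exactness. Let $M \in \DM^A_{\et,c}(S,R)$ with $M \leqslant_{hp} 0$. By \Cref{AM.locality}(1)(b), this is equivalent to $i_x^* M \leqslant_{hp} 0$ in $\DM^A_{\et,c}(k(x), R)$ for every point $x \in S$. For any $x \in S[1/\ell]$, the inclusion $i_x$ factors through the open immersion $j_\ell\colon S[1/\ell]\rar S$, so $\bar{\rho}_v$ commutes with $i_x^*$; combining with the t-exactness on fields (\Cref{fields}(4)) yields $i_x^* \bar{\rho}_v(M) \leqslant_p 0$ at every such point. The analogue of \Cref{caracterisation perverse} for $\mc{D}^b_c(S[1/\ell], R_v)$, namely the BBD characterization of perverse non-positivity \cite[2.2.1]{bbd}, then gives $\bar{\rho}_v(M) \leqslant_p 0$.

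Next I would address left t-exactness. Let $M \geqslant_{hp} 0$; by \Cref{AM.locality}(1)(a), $M$ is ordinary bounded below and $\omega^0 i_x^! M \geqslant_{hp} 0$ for every $x \in S$. For $x \in S[1/\ell]$, the functor $\bar{\rho}_v$ commutes with $i_x^!$ and with $\omega^0$ (using the commutation of the realization with $\omega^0$ as in \cite[4.6.3]{aps}); since over a field every constructible $\ell$-adic complex lies in the Artin subcategory, $\omega^0$ acts as the identity on the target, giving $\bar{\rho}_v(\omega^0 i_x^! M) \simeq i_x^! \bar{\rho}_v(M)$. Applying t-exactness on fields once more, together with ordinary boundedness below, produces $i_x^! \bar{\rho}_v(M) \geqslant_p 0$, hence $\bar{\rho}_v(M) \geqslant_p 0$ by the BBD characterization of perverse non-negativity. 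Alternatively, one can obtain the same conclusion by decomposing $M$ according to \Cref{description d=2} and observing that $\bar{\rho}_v$ intertwines the motivic gluing data with the standard gluing of perverse $\ell$-adic sheaves of \cite[1.4.10]{bbd}, using that $\omega^0 i_0^*(j_0)_*$ is controlled by \Cref{nearby cycle} over the one-dimensional stratum.

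The hard part will be the commutation $\bar{\rho}_v \circ \omega^0 \simeq \omega^0 \circ \bar{\rho}_v$ at the level of constructible Artin motives, together with the identification of the $\ell$-adic Artin truncation at residue fields as the identity functor. The dimension hypothesis $\dim(S) \leqslant 2$ enters only through \Cref{AM.main theorem}, which ensures that the perverse homotopy t-structure exists on $\DM^A_{\et,c}(S,R)$ in the first place and therefore makes the statement meaningful.
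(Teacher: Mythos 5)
Your treatment of right t-exactness is fine (and matches the paper's one-line observation that generators are sent to perverse non-positive objects), but the left t-exactness argument has a fatal gap. The claimed equivalence $\bar{\rho}_v(\omega^0 i_x^! M) \simeq i_x^! \bar{\rho}_v(M)$ does not hold, and its left-hand side is not even defined in general. The concrete issue is that $\omega^0$ kills (with rational coefficients) or turns into non-constructible torsion (with integral coefficients) exactly the positive Tate-twist contributions that the $\ell$-adic realization sees. Take $M=\un_S$ with $\Q$-coefficients, $S$ regular, and $x$ a point of codimension $c>0$. Absolute purity gives $i_x^!\un_S = \un_{k(x)}(-c)[-2c]$, and with rational coefficients $\omega^0(\un_{k(x)}(-c))=0$ by \cite[3.9]{plh}, so $\omega^0 i_x^!\un_S = 0$; yet $i_x^!\bar{\rho}_v(\un_S)=R_v(-c)[-2c]\ne 0$. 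With integral coefficients, by \Cref{omega^0 corps} the motive $\omega^0 i_x^!\un_S$ has $\Q/\Z$ pieces in its cohomology and is therefore not constructible, so $\bar{\rho}_v$ does not apply to it at all. The upshot is that the condition $\omega^0 i_x^!M \geqslant_{\ord}-\delta(x)$ from \Cref{AM.locality}(1)(a) gives no control over the ordinary connectivity of $i_x^!\bar{\rho}_v(M)$, and the $\omega^0$-commutation you invoke (cited from \cite[4.6.3]{aps}, which is in the finite-field setting) is a statement about constructible Artin inputs, not about $\omega^0$ applied to a cohomological motive like $i_x^!M$.

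Your alternative sketch, decomposing $M$ via \Cref{description d=2} and intertwining the motivic gluing data with the $\ell$-adic gluing of \cite[1.4.10]{bbd}, is the paper's actual strategy. But the technical core is proving that $\bar{\rho}_v$ preserves the monomorphism conditions on the connecting maps: this is the content of \Cref{634}, which in turn requires resolution of singularities via Lipman's theorem, cdh-descent, the classical Affine Lefschetz theorem for torsion étale sheaves, and the vanishing in \Cref{635}. The use of \Cref{nearby cycle} only handles the one-dimensional stratum; the two-dimensional case needs the full resolution argument. So the alternative as written is too thin to be a proof, and the primary route doesn't go through.
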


\begin{proof} First, notice that $\rho_\ell$ is right t-exact because the generators of the perverse homotopy t-structure are sent to perverse t-non-positive objects. Using the same trick as in the proof of \cite[4.2.7]{AM1}, we can assume that $R=\Q$. We may also assume that $\ell$ is invertible on $S$. Now, it suffices to show that any perverse Artin étale motive on $S$ is sent to a complex which is non-negative for the perverse t-structure. Let $M$ be such a motive on $S$.

By \Cref{description d=2}, the motive $M$ is given by
\begin{itemize}
    \item A triple $(U_0,U_1,U_2)$ of locally closed subschemes of $S$ such that each $U_k$ is nil-regular, purely of codimension $k$, is open in the scheme $F_{k-1}:=S\setminus (U_0\cup \cdots \cup U_{k-1})$ and such that the open immersion $$j_k\colon U_k\rar F_{k-1}$$ is affine (take the conventions that $F_{-1}=S$). Denote by $$i_k\colon F_k\rar F_{k-1}$$ the closed immersion.
    \item A triple $(M_0,M_1,M_2)$ of constructible smooth Artin motives on $U_i$ such that each $M_i$ is placed in perverse degree $[-i,0]$.
    \item Two connecting maps 
    	$$ \begin{cases} \phi_2\colon M_{2}\rar \omega^0 i_1^*(j_1)_*M_1 \\
    	\phi_1\colon M_{12} \rar \omega^0 i_0^*(j_0)_*M_0
    	\end{cases}$$ 
such that $\Hlh^{-k}(\phi_k)$ is a monomorphism and where $M_{12}$ is the cofiber of the map $(i_1)_* M_{2}[-1]\rar (j_1)_! M_1$ induced by $\phi_2$.
    \end{itemize}

By \Cref{AM.locality}, the $\ell$-adic complex $\rho_{\ell}(M)|_{U_i}=\rho_{\ell}(M_i)$ is in perverse degree $[-i,0]$. To show that $\rho_{\ell}(M)$ is a perverse sheaf, it suffices to show that the natural maps $$ \begin{cases} \psi_2\colon \rho_{\ell}(M_{2})\rar \rho_{\ell}(i_1^*(j_1)_*M_1) \\
    	\psi_1\colon \rho_{\ell}(M_{12}) \rar  \rho_{\ell}(i_0^*(j_0)_*M_0)
    	\end{cases}$$ 
are such that ${}^{p} \Hl^{-k}(\psi_k)$ is a monomorphism.

Notice now that if $k\in \{ 1,2\}$ and $M_{\geqslant k}=\begin{cases} M_{12} \text{ if }k=1 \\ M_2 \text{ if }k=2\end{cases}$, we have a commutative diagram
$$\begin{tikzcd}\rho_{\ell}(M_{\geqslant k}) \ar[r,"\rho_{\ell}(\phi_{k})"] \ar[rd,"\psi_k"] & \rho_{\ell}(\omega^0 i^*_{k-1}(j_{k-1})_* M_{k-1})\ar[d,"\rho_{\ell}(\eta(i^*_{k-1}(j_{k-1})_* M_{k-1}))"] \\
& \rho_{\ell}(i^*_{k-1}(j_{k-1})_* M_{k-1})
\end{tikzcd}$$
with $\eta\colon \iota \circ \omega^0 \to \mathrm{id}$ the counit of the adjunction that defines $\omega^0$ (see \Cref{def omega^0}).

Working by induction on $\dim(S)$, we may assume that the maps $$\DM^A_{\et,c}(F_{k-1},\Q)\rar \mc{D}^b_c(F_{k-1},\Q_\ell)$$ are t-exact. Thus, the map ${}^{p} \Hl^{-k}(\rho_{\ell}(\phi_{k}))$ is a monomorphism. To prove the theorem, it therefore suffices to show that the map ${}^p \Hl^{-k}\left[\rho_{\ell}(\eta(i^*_{k-1}(j_{k-1})_* M_{k-1}))\right]$
is a monomorphism.

Write $$M_1=A[1]\bigoplus B[0],$$ which is possible since the category of smooth Artin motives with coefficients in $\Q$ is semi-simple by Maschke's theorem. Then, using \Cref{Affine Lefschetz Q-cons} and the induction hypothesis, the $\ell$-adic complex $\rho_{\ell}(\omega^0 i^*_1(j_1)_*B)$ is in degree at least $-1$ with respect to the perverse t-structure. By the usual Artin vanishing theorem, so is $\rho_{\ell}(i^*_1(j_1)_*B)$, since  by \Cref{AM.locality}, the $\ell$-adic complex $\rho_{\ell}(B)$ is a perverse sheaf. Hence, the map ${}^{p} \Hl^{-2}\left[\rho_{\ell}(\eta(i^*_1(j_1)_* M_{1}))\right]$ is a monomorphism if and only if the map
$${}^{p} \Hl^{-1}\left[\rho_{\ell}(\eta)\right]\colon {}^{p} \Hl^{-1}\left[\rho_{\ell}(\omega^0 i_1^*(j_1)_* A)\right]\longrightarrow {}^{p} \Hl^{-1}\left[\rho_{\ell}(i_1^*(j_1)_* A)\right]$$ is a monomorphism. Hence, to prove the theorem, it suffices to show that the maps $${}^{p} \Hl^{-1}\left[\rho_{\ell}(\eta)\right]\colon {}^{p} \Hl^{-1}\left[\rho_{\ell}(\omega^0 i_1^*(j_1)_* A)\right]\longrightarrow {}^{p} \Hl^{-1}\left[\rho_{\ell}(i_1^*(j_1)_* A)\right]$$
and 
$${}^{p} \Hl^{-1}\left[\rho_{\ell}(\eta)\right]\colon {}^{p} \Hl^{-1}\left[\rho_{\ell}(\omega^0 i_0^*(j_0)_* M_0)\right]\longrightarrow {}^{p} \Hl^{-1}\left[\rho_{\ell}(i_0^*(j_0)_* M_0)\right]$$ are monomorphisms which follows from \cref{634} below.
\end{proof}
\begin{lemma}\label{634}  Let $(S,\delta)$ be an excellent scheme of dimension $2$ or less, let $\ell$ be a prime number, let $i\colon F\rar S$ be a closed immersion and let $j\colon U\rar S$ be the open complementary immersion. Assume that the prime number $\ell$ is invertible on $S$, that the morphism $j$ is affine and that the scheme $U$ is nil-regular. Let $M$ be an object of $\mathrm{M}^{smA}(U,\Q)$. Then, the map $${}^{p} \Hl^{-1}\rho_{\ell}(\delta)\colon {}^{p} \Hl^{-1}\left[\rho_{\ell}(\omega^0 i^*j_* M)\right]\longrightarrow {}^{p} \Hl^{-1}\left[\rho_{\ell}(i^*j_* M)\right]$$ is a monomorphism.
\end{lemma}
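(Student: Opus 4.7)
The plan is to proceed by induction on $\dim(S)$, using the statement of \Cref{t-ex of l-adic real} on schemes of strictly smaller dimension. Since both $\rho_v(\omega^0 j_* M)$ and $j_* \rho_v(M)$ are supported on the scheme-theoretic closure of $U$ in $S$, we may replace $S$ with this closure to assume that $U$ is dense, so that $\dim(F) < \dim(S)$. The case $\dim(S) = 0$ is vacuous; otherwise the inductive hypothesis provides t-exactness of $\rho_v\colon \DM^A_{\et,c}(F, K) \to \mc{D}^b_c(F, K_v)$.

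Realizing the natural map $\delta\colon \omega^0 j_* \rar j_*$ and using that $\rho_v$ commutes with the six functors yields a morphism of distinguished triangles in $\mc{D}^b_c(S, K_v)$:
$$\begin{tikzcd}
j_! \rho_v(M) \ar[r] \ar[d,equal] & \rho_v(\omega^0 j_* M) \ar[r] \ar[d,"\rho_v(\delta)"] & i_* \rho_v(\omega^0 i^* j_* M) \ar[d, "i_* \rho_v(\delta)"] \\
j_! \rho_v(M) \ar[r] & j_* \rho_v(M) \ar[r] & i_* i^* j_* \rho_v(M)
\end{tikzcd}$$
By \Cref{AM.locality}(2)(b), $\rho_v(M)$ is perverse on $U$, so $j_! \rho_v(M)$ and $j_* \rho_v(M)$ are perverse on $S$ by the classical affine Lefschetz theorem. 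By \Cref{lefschetz affine 2Q} and \Cref{Affine Lefschetz direct image}, both $j_! M$ and $\omega^0 j_* M$ are perverse Artin motives, so the motivic localization triangle places $\omega^0 i^* j_* M$ in $\Hlh^k$-degrees in $[-1, 0]$; by the inductive hypothesis on $F$, this forces $\rho_v(\omega^0 i^* j_* M) \in {}^{p}\mc{D}^{[-1, 0]}$.

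Granted the key vanishing ${}^{p}\Hl^{-1}(\rho_v(\omega^0 j_* M)) = 0$, the long exact sequence of the top triangle produces an injection $i_* \, {}^{p}\Hl^{-1}(\rho_v(\omega^0 i^* j_* M)) \hookrightarrow j_! \rho_v(M)$, while the bottom triangle analogously realizes $i_* \, {}^{p}\Hl^{-1}(i^* j_* \rho_v(M))$ as the kernel of $j_! \rho_v(M) \rar j_* \rho_v(M)$, hence an injection into $j_! \rho_v(M)$. Naturality of the long exact sequence in the map of triangles then shows that $i_* \, {}^{p}\Hl^{-1}(\rho_v(\delta))$ factors the first injection through the second, forcing it to be a monomorphism; full faithfulness of $i_*$ yields the desired injectivity of ${}^{p}\Hl^{-1}(\rho_v(\delta))$.

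The main obstacle is thus the vanishing ${}^{p}\Hl^{-1}(\rho_v(\omega^0 j_* M)) = 0$, which requires analyzing $\rho_v(\omega^0 j_* M)$ more finely than the triangle itself allows. The strategy is to combine the commutation of $\rho_v$ with $\omega^0$ from \cite[4.6.3]{aps} with a stratification of $S$ into regular strata on which $\omega^0 j_* M$ restricts to a smooth Artin motive (via \Cref{AM.stratification}): on each such stratum, \Cref{AM.locality}(2)(b) makes the restriction of $\rho_v(\omega^0 j_* M)$ perverse, while the inductive t-exactness of $\rho_v$ on $F$ controls the closed strata, and the gluing formalism assembles these pieces into the required global perversity of $\rho_v(\omega^0 j_* M)$.
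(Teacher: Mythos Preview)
Your reduction to the vanishing ${}^{p}\Hl^{-1}(\rho_v(\omega^0 j_* M)) = 0$ is valid but does not actually move the problem: chasing your own morphism of triangles shows this vanishing is \emph{equivalent} to the lemma. Indeed, since ${}^{p}\Hl^{-1}(j_!\rho_v M)=0$ and ${}^{p}\Hl^{-1}(j_*\rho_v M)=0$ by classical affine Lefschetz, the bottom connecting map $\partial^{\mathrm{bot}}\colon i_*\,{}^{p}\Hl^{-1}(i^*j_*\rho_v M)\hookrightarrow j_!\rho_v M$ is injective, and the top connecting map factors as $\partial^{\mathrm{top}}=\partial^{\mathrm{bot}}\circ i_*\,{}^{p}\Hl^{-1}\rho_v(\delta)$. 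Hence
\[
\ker\bigl(i_*\,{}^{p}\Hl^{-1}\rho_v(\delta)\bigr)=\ker(\partial^{\mathrm{top}})={}^{p}\Hl^{-1}\bigl(\rho_v(\omega^0 j_* M)\bigr),
\]
so all of the content lies in your final paragraph, which has two genuine gaps.

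First, the commutation $\rho_v\omega^0\simeq\omega^0\rho_v$ of \cite[4.6.3]{aps} is established only for schemes of finite type over $\mb{F}_p$; it is not available for a general excellent scheme of dimension $\leqslant 2$, which is the hypothesis here. Second, and more fundamentally, the gluing argument cannot yield $\rho_v(\omega^0 j_* M)\geqslant_p 0$. The non-negative side of gluing requires control of $i_T^!\,\rho_v(\omega^0 j_* M)$ on closed strata $T$, not of $i_T^*$. What the inductive t-exactness on $F$ together with \Cref{AM.locality}(2)(b) actually gives is that $\rho_v\bigl((\omega^0 j_* M)|_T\bigr)$ sits in the same perverse degrees as $(\omega^0 j_* M)|_T$; but the latter is only known to satisfy $\leqslant_{hp}0$ (right t-exactness of $i_T^*$), so nothing rules out a degree $-1$ contribution. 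Attempting to control $i^!\rho_v(\omega^0 j_* M)=\rho_v(i^!\omega^0 j_* M)$ instead fails because $i^!\omega^0 j_* M$ is only a cohomological motive, to which the inductive hypothesis on Artin motives does not apply.

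The paper's argument is geometric rather than formal: it first reduces to $M=\un_U[d]$ with $S$ normal, then for $\dim S=1$ invokes \Cref{nearby cycle} to identify $\omega^0 i^*j_*\un_U$ with $\rho_! i^*j_*\underline{K}$ and checks the injection by a direct \'etale-cohomology computation on the strict henselization; for $\dim S=2$ it passes to a resolution $\widetilde{S}\to S$ with simple normal crossing exceptional divisor $E$, reducing via \Cref{az211} to comparing ${}^{p}\Hl^1\rho_v(\omega^0 p_*\un_E)$ with ${}^{p}\Hl^1\rho_v(p_*\un_E)$, which is then handled by cdh-descent on the components of $E$ together with a separate lemma (\Cref{635}) over the $1$-dimensional base $F$.
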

\begin{proof}Let $d=\dim(S)$. Take the convention that $\delta(S)=d$. As in the proof of \Cref{lefschetz affine 2Q}, we can assume that $M=\un_U[d]$, that the scheme $S$ is normal and that the scheme $U$ is regular. 

\textbf{Case 1:} Assume that $\dim(S)=1$. Since the closed subscheme $F$ is $0$-dimensional, we can assume it to be reduced to a point $\{ x\}$. Recall that the $2$-morphism $$\Theta\colon \rho_!i^*j_*\rar \omega^0 i^*j_*\rho_!$$ of \Cref{nearby cycle} is defined so that we have a commutative diagram
$$\begin{tikzcd} \rho_!i^*j_* \ar[r,"\Theta"] \ar[rd,"Ex"] & \omega^0 i^*j_* \rho_!\ar[d,"\eta(i^*j_*\rho_!)"] \\
& i^*j_*\rho_!
\end{tikzcd}$$ where $Ex$ is the exchange map.

Moreover, the map $\Theta(\underline{\Q})$ is an equivalence by \Cref{nearby cycle}. Therefore, it suffices to show that the map $${}^{p} \Hl^0 \rho_{\ell}(Ex)\colon {}^{p} \Hl^{0}\rho_{\ell}(\rho_! i^*j_* \underline{\Q})\longrightarrow {}^{p} \Hl^{0}(i^*j_* \underline{\Q_\ell})$$ is a monomorphism. Now, this map can be by \cite[VIII.5]{sga4} identified with the canonical map
$$\underline{\Q_\ell}\rar \Hl^0_{\et}(\mc{O}_{S,x}^{sh} \times_S U,\Q_\ell)$$ where $\mc{O}_{S,x}^{sh}$ is the strict henselization of the local ring of $S$ at $x$. Since $S$ is normal, this map is an isomorphism by \cite[2.1]{deninger}.

\textbf{Case 2:} Assume that $\dim(S)=2$. Since $S$ is normal its singular points are of codimension $2$. Let $f\colon \widetilde{S}\rar S$ be a resolution of singularities of $S$ such that the inverse image of $F$ is a simple normal crossing divisor (which exists by Lipman's theorem). We have a commutative diagram

$$\begin{tikzcd}
U \arrow[d, equal] \arrow[r, "\gamma"] & \widetilde{S} \arrow[d, "f"] & E \arrow[d, "p"] \arrow[l, swap,"\iota"] \\
U \arrow[r, "j"]                               & S                           & F \arrow[l, swap,"i"]                  
\end{tikzcd}$$
 made of cartesian squares which yields a commutative diagram
$$\begin{tikzcd}\rho_{\ell}(\omega^0i^*j_*\un_U)\ar[r] \ar[d,"\rho_{\ell}(\eta)"] & \rho_{\ell}(\omega^0 p_*\iota^*\gamma_*\un_U) \ar[d,"\rho_{\ell}(\eta)"] \\
\rho_{\ell} (i^*j_*\un_U) \ar[r]& \rho_{\ell}(p_*\iota^*\gamma_* \un_U)
\end{tikzcd}$$ 
where the horizontal maps are equivalences. Thus, it suffices to show that the map $${}^{p} \Hl^{1}\rho_{\ell}(\eta)\colon {}^{p} \Hl^{1}\rho_{\ell}(\omega^0 p_*\iota^*\gamma_*\un_U)\longrightarrow {}^{p} \Hl^{1}\rho_{\ell}(p_*\iota^*\gamma_* \un_U)$$ is a monomorphism.

The localization triangle \eqref{AM.colocalization} induces a morphism of exact triangles:

\[\begin{tikzcd}
	{\rho_{\ell}(\omega^0p_*\iota^!\un_{\widetilde{S}})} & {\rho_{\ell}(\omega^0p_*\un_E)} & {\rho_{\ell}(\omega^0 p_*\iota^*\gamma_*\un_U)} \\
	{\rho_{\ell}(p_*\iota^!\un_{\widetilde{S}})} & {\rho_{\ell}(p_*\un_E)} & {\rho_{\ell}(p_*\iota^*\gamma_*\un_U)}
	\arrow["{\rho_{\ell}(\eta)}", from=1-3, to=2-3]
	\arrow[from=1-2, to=1-3]
	\arrow["{\rho_{\ell}(\eta)}"', from=1-2, to=2-2]
	\arrow[from=2-2, to=2-3]
	\arrow[from=1-1, to=1-2]
	\arrow["{\rho_{\ell}(\eta)}"', from=1-1, to=2-1]
	\arrow[from=2-1, to=2-2]
\end{tikzcd}\]

By \Cref{az211}, we have $$\omega^0p_*\iota^!\un_{\widetilde{S}}=0.$$ Moreover, using the cdh-descent argument which appears the last step of the proof of \Cref{lefschetz affine 2Q} we can show that $${}^{p} \Hl^1(\rho_{\ell}(p_*\iota^!\un_{\widetilde{S}}))=0.$$  Hence, we get a commutative diagram with exact rows

\[\begin{tikzcd}
	{0} & {{}^p\Hl^1\rho_{\ell}(\omega^0p_*\un_E)} & {{}^p\Hl^1\rho_{\ell}(\omega^0 p_*\iota^*\gamma_*\un_U)} & 0\\
	{0} & {{}^p\Hl^1 \rho_{\ell}(p_*\un_E)} & {{}^p\Hl^1 \rho_{\ell}(p_*\iota^*\gamma_*\un_U)}&
	\arrow[from=1-3, to=1-4]
	\arrow["{{}^p\Hl^1\rho_{\ell}(\eta)}", from=1-3, to=2-3]
	\arrow[from=1-2, to=1-3]
	\arrow["{{}^p\Hl^1\rho_{\ell}(\eta)}"', from=1-2, to=2-2]
	\arrow[from=2-2, to=2-3]
	\arrow[from=1-1, to=1-2]
	\arrow[from=2-1, to=2-2]
\end{tikzcd}\]

Therefore, it suffices to show that the map $${}^p\Hl^1\rho_{\ell}(\eta)\colon {}^p\Hl^1\rho_{\ell}(\omega^0p_*\un_E) \rar {}^p\Hl^1 \rho_{\ell}(p_*\un_E)$$ is a monomorphism.

Write $E=\bigcup\limits_{i \in J} E_i$ with $J$ finite, where for any index $i$, the scheme $E_i$ is regular and of codimension $1$, where for any distinct indices $i$ and $j$ the scheme $E_{ij}=E_i\cap E_j$ is of codimension $2$ and regular and where the intersections of $3$ distinct $E_i$ are empty.

By cdh-descent, we have a morphism of exact triangles
$$\begin{tikzcd}\rho_{\ell}\omega^0p_*\un_E \ar[r] \ar[d,"\rho_{\ell}(\eta)"]& \bigoplus\limits_{i \in J} \rho_{\ell}\omega^0 (p_i)_*\un_{E_i}\ar[r]\ar[d,"\rho_{\ell}(\eta)"]& \bigoplus\limits_{\{ i,j\} \subseteq J} \rho_{\ell} (p_{ij})_* \un_{E_{ij}}\ar[d,equals] \\ 
\rho_{\ell}p_*\un_E\ar[r] & \bigoplus\limits_{i \in J} \rho_{\ell}(p_i)_*\un_{E_i}\ar[r]& \bigoplus\limits_{\{ i,j\} \subseteq J} \rho_{\ell} (p_{ij})_* \un_{E_{ij}} 
\end{tikzcd}$$
which yields a commutative diagram with exact rows
\smallskip

\adjustbox{scale=0.8,center}{%
\begin{tikzcd}
\bigoplus\limits_{i \in J} {}^{p}\Hl^0\rho_{\ell}\omega^0 (p_i)_*\un_{E_i}\ar[r]\ar[d,"{}^{p}\Hl^0\rho_{\ell}(\eta)"]\ar[r]&
\bigoplus\limits_{\{ i,j\} \subseteq J} \rho_{\ell} (p_{ij})_* \un_{E_{ij}}\ar[d,equals] \ar[r]& 
{}^{p}\Hl^1\rho_{\ell}\omega^0p_*\un_E \ar[r] \ar[d,"{}^{p}\Hl^1\rho_{\ell}(\eta)"]\ar[r]& 
\bigoplus\limits_{i \in J} {}^{p}\Hl^1\rho_{\ell}\omega^0 (p_i)_*\un_{E_i}\ar[d,"{}^{p}\Hl^1\rho_{\ell}(\eta)"]\ar[r]&
0 \\ 
\bigoplus\limits_{i \in J}{}^{p}\Hl^0 \rho_{\ell}(p_i)_*\un_{E_i}\ar[r]& 
\bigoplus\limits_{\{ i,j\} \subseteq J} \rho_{\ell} (p_{ij})_* \un_{E_{ij}} \ar[r]&
{}^{p}\Hl^1\rho_{\ell}p_*\un_E\ar[r] &
\bigoplus\limits_{i \in J} {}^{p}\Hl^1\rho_{\ell}\omega^0 (p_i)_*\un_{E_i}\ar[r]&
0 \\ 
\end{tikzcd}}
The proof then follows from the Five Lemma and from \Cref{635} below.
\end{proof}
\begin{lemma}\label{635} Let $(S,\delta)$ be an excellent $1$-dimensional d-scheme and let $f\colon X\rar S$ be a proper map, let $\ell$ be a prime number. Assume that the scheme $X$ is regular, connected and at most $1$-dimensional. Then, \begin{enumerate}\item The map $${}^{p} \Hl^0(\rho_{\ell} \omega^0 f_*\un_X) \rar {}^{p} \Hl^0(\rho_{\ell} f_*\un_X)$$ is an isomorphism.
\item The map $${}^{p} \Hl^1(\rho_{\ell} \omega^0 f_*\un_X) \rar {}^{p} \Hl^1(\rho_{\ell} f_*\un_X)$$ is a monomorphism.
\end{enumerate}
\end{lemma}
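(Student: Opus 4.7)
The plan is to reduce the statement to a vanishing of certain perverse cohomology sheaves of the cofiber, which can be checked by exploiting the t-exactness results already at our disposal together with the explicit computations of the Artin truncation functor over a field.

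First, I would make a Stein-type reduction: factor $f = g \circ h$ where $g\colon \pi_0(X/S) \rar S$ is finite and $h\colon X \rar \pi_0(X/S)$ has geometrically connected generic fibers (using the relative normalization of $S$ in $X$). Proposition \ref{omega^0 prt} gives $\omega^0 f_* \un_X = g_* \omega^0 h_* \un_X$, while $f_* \un_X = g_* h_* \un_X$. Since $g$ is finite, $g_*$ commutes with $\rho_v$ and is perverse t-exact on the $\ell$-adic side, so the statement for $f$ reduces to the same statement for $h$. Hence we may assume that the geometric generic fiber of $f$ is connected.

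Second, I would apply the t-exactness machinery. Excellent $1$-dimensional schemes allow resolution of singularities by alterations (Lipman), so Proposition \ref{Affine Lefschetz direct image} gives that $\omega^0 f_* \un_X[\delta(X)]$ is a perverse Artin motive on $S$. Since $\dim S = 1 \leqslant 2$, Theorem \ref{t-ex of l-adic real} shows that $\rho_v(\omega^0 f_* \un_X)[\delta(X)]$ is a perverse $\ell$-adic sheaf. Consequently, $\rho_v(\omega^0 f_* \un_X)$ is concentrated in a single perverse degree, namely $\delta(X)$. Writing $C$ for the cofiber of $\rho_v(\omega^0 f_* \un_X) \rar \rho_v(f_* \un_X)$ and using the long exact sequence in perverse cohomology, the concentration in one degree forces assertion (1) to be equivalent to the pair of vanishings ${}^{p}\Hl^{-1}(C) = 0 = {}^{p}\Hl^0(C)$, and assertion (2) to be equivalent to the single vanishing ${}^{p}\Hl^0(C) = 0$. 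Thus the task is to prove that $C$ lies in perverse degrees $\geqslant 1$.

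Third, I would analyze $C$ via the open-closed recollement on $S$. On a sufficiently small dense open $j\colon U \hookrightarrow S$ with $U$ regular, Corollary \ref{omega^0 corps} applied to the generic fibers gives $\omega^0 (f_\eta)_* \un_{X_\eta} = \un_\eta$ (the torsion Tate twists $\mu^{n-1}$ vanish over the number field $K$, and after Step~1 the Artin representation of $\pi_0(X_{\bar\eta})$ is trivial). Combined with smooth base change, this yields $\omega^0 f_* \un_X|_U = \un_U$, and the restriction of our map to $U$ identifies with the unit $\un_{U,v} \rar (f_U)_* \un_{X_U,v}$. At a closed point $s$ with inclusion $i_s\colon \{s\} \rar S$, proper base change gives $i_s^* f_* \un_X = (f_s)_* \un_{X_s}$ and, via Proposition \ref{omega^0 prt}, $i_s^* \omega^0 f_* \un_X$ is computed from $\omega^0 (f_s)_* \un_{X_s}$; here Corollary \ref{omega^0 corps} identifies this with $h_{k(s)}(\pi_0(X_s/k(s)))$ (again with $K$-coefficients the $\mu^{n-1}$-pieces vanish).

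Fourth, I would assemble these computations to conclude ${}^{p}\Hl^i(C) = 0$ for $i \leqslant 0$, using the glued description of perverse sheaves on $1$-dimensional $S$ (the $\ell$-adic analogue of Proposition \ref{description d=2}). The main obstacle is the analysis at closed points where $X_s$ is singular or reducible: there the Artin truncation only sees the $\pi_0$-piece, while $(f_s)_* \un_{X_s,v}$ carries additional perverse cohomology in higher degrees; one must verify that these extra contributions sit in perverse degrees $\geqslant 1$ (equivalently, that the ``non-Artin part'' of $f_*\un_X$ at $s$ is already perverse-positive). This is where the hypothesis that $X$ is regular enters crucially, via the purity of $f_* \un_{X,v}$ and the fact that shifts of constant sheaves on positive-dimensional regular pieces sit in the correct perverse range.
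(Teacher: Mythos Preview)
The paper's proof is drastically simpler than your approach. It rests on a dichotomy you overlooked: since $X$ is regular, connected, and (in the only application, to the $E_i$ of Lemma~\ref{634}) at most $1$-dimensional, and $f$ is proper to a $1$-dimensional base, either $f$ is finite or the image of $f$ is a single closed point. In the finite case $\omega^0 f_*\un_X = f_*\un_X$ and both assertions are tautologies. In the second case $f$ factors as $i_s\circ g$ with $g\colon X\to\Spec(k(s))$; since $(i_s)_*$ is perverse t-exact and commutes with $\rho_v$ and $\omega^0$, one is reduced to comparing $\rho_v\omega^0 g_*\un_X$ and $\rho_v g_*\un_X$ over the field $k(s)$. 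There Corollary~\ref{omega^0 corps} (with $K$-coefficients, so all the torsion pieces $\mu^{n-1}$ vanish) shows $\omega^0 g_*\un_X$ is concentrated in ordinary degree~$0$ with value $h_{k(s)}(\pi_0(X/k(s)))$, whose realization is exactly $H^0_{\et}(X_{\overline{k(s)}},K_v)$; assertion~(1) follows, and assertion~(2) is the trivial monomorphism $0\hookrightarrow H^1$. That is the entire proof.

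Your route has two genuine problems. First, in Step~2 you invoke Theorem~\ref{t-ex of l-adic real}, but Lemma~\ref{635} is itself used in the proof of that theorem (it feeds into Case~2 of Lemma~\ref{634}). One can rescue this by observing that only the $1$-dimensional instance of Theorem~\ref{t-ex of l-adic real} is needed here, and that instance is established through Case~1 of Lemma~\ref{634} (via \Cref{nearby cycle}), which does not rely on Lemma~\ref{635}; but you do not make this inductive bookkeeping explicit, and as written the citation is circular. Second, Step~4 is a plan rather than a proof: you write ``one must verify that these extra contributions sit in perverse degrees $\geqslant 1$'' and invoke ``purity'' without carrying anything out. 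Since the whole lemma admits a two-line argument, a four-step program whose final step is left open is not adequate.
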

\begin{proof} If $f$ is finite, then, $\omega^0 f_*\un_X=f_*\un_X$ and both statements hold. Otherwise, the image of $X$ is a point and the lemma follows from \Cref{omega^0 corps}.
\end{proof}

\section*{Appendix: The six functors for cohomological motives (following Gabber)}

\renewcommand{\thetheorem}{A.\arabic{theorem}}
\setcounter{theorem}{0}

\hypertarget{lol}{In} \cite[1.12]{plh}, Pepin Lehalleur proved that (constructible) cohomological motives are stable under the six functors. However, one has to assume that $R=\Q$ and to have the functors $f^!$ and $f_*$ one also has to assume the existence of
resolutions of singularities. This is not necessary: one can mimic the method of Gabber \cite[6.2]{em} in the cohomological case. Thus, we can prove that if $X$ (resp. $Y$) is a quasi-excellent scheme, and $f\colon Y\rar X$, $f_*$ (resp. $f^!$) preserves cohomological constructible motives.  We will outline how to do this.
The following proposition is straightforward:
\begin{proposition}\label{obvious_stability}
	Let $R$ be a commutative ring. The fibred subcategory $\DM^{\coh}_{\et,c}(-,R)$ of $\DM_{\et}(-,R)$ is stable under tensor product, negative Tate twists, $f^*$ if $f$ is any morphism and $f_!$ if $f$ is separated and of finite type.
\end{proposition}

\begin{corollary}
	Let $i\colon D\to X$ be the closed immersion of a simple normal crossing divisor, with $X$ regular. Then the motive $i^!\un_X$ is cohomological constructible.
\end{corollary}
\begin{proof}
If $D$ only has one branch, the result follows from absolute purity and stability by negative Tate twist. The general case then follows by induction on the number of branches and cdh-descent.
\end{proof}

\begin{definition}
	Let $S$ be a scheme, let $R$ be a commutative ring and let $\pp$ be a prime ideal of $\Z$. 
	An object $M$ of $\DM_{\et}(S,R)$ is called \emph{$\pp$-constructible cohomological} when its image in $\DM_{\et}(S,R)_\pp$ (with the notations of \Cref{p-loc}) belongs to $\DM^{\coh}_{\et,c}(X,R)_\pp$. 
\end{definition}

\begin{proposition}\label{p-locality}
	An étale motive is constructible cohomological if and only if it is $\pp$-constructible cohomological for any prime ideal $\pp$ of $\Z$.
\end{proposition}
\begin{proof}
	This follows from \cite[B.1.7]{em}.
\end{proof}

\begin{proposition}\label{char_p_case}
	Let $p$ be a prime number. Let $S$ be a scheme of characteristic $p$ and let $R$ be a ring. Then, an étale motive an étale motive over $S$ is $(p)$-constructible cohomological if and only if it is $(0)$-cohomological constructible.
\end{proposition}
\begin{proof}
	The proof is the same as in \cite[6.2.4]{em}: using the Artin-Schreier exact sequence \cite[A.3.1]{em}, the natural map $\DM_{\et}(S,R)_{(p)}\to \DM^{\et}(S,R)_{(0)}$ is invertible.
\end{proof}

The following lemma can be compared with \cite[6.2.7, 6.2.9]{em} and \cite[XIII.3.1.3]{travauxgabber}:
\begin{lemma}\label{Gabber_lemma}(Gabber's lemma)
	Let $\mc{T}_1$ be a fibered subcategory of $\DM^{\et}(-,R)$ over the category of (noetherian finite-dimensional) schemes. Assume that:
	\begin{enumerate}[label=(\alph*)]
		\item For any scheme $X$, the subcategory $\mc{T}_1(X)$ of $\DM_{\et}(X,R)$ is thick and contains the unit object $\un_X$.
		\item For any morphism of finite type $f \colon X'\to X$, the fibered category $\mc{T}_1$ is stable under $f_!$.
		\item For any dense open immersion $j \colon V \to Y$ , with $Y$ regular, which is the complement of a strict normal crossing divisor, the motive $j_*(\un_V)$ lies in $\mc{T}_1(Y)$.
	\end{enumerate}
Let $\pp$ be a prime ideal of $\Z$ and $\mc{T}_0$ be the fibered subcategory of $\DM_{\et}(-,R)$ defined for any $X$ by
\[\mc{T}_0(X)=\{ M \in \DM_{\et}(X,R)\mid M_\pp \in \mc{T}_1(X)_{\pp}\}\]
Then, for any dense open immersion $j \colon U \to X$ with $X$ quasi-excellent such that for any point $x$ of $X$,
the exponent characteristic of the residue field $k(x)$ is not in $\pp$, the motive $j_*(\un_U)$ lies in $\mc{T}_0(X)$.
\end{lemma}
\begin{proof}
	The fibered subcategory $\mc{T}_0$ also satisfies (a), (b) and (c). 
The proof is a noetherian induction on $X$. We may assume that $X$ is reduced, and it is sufficient to prove by induction on $c\geqslant 0$\footnote{There are two inductions going on; the induction hypothesis on $X$ will only be used once later in the proof.} that there exists a closed subscheme $T\subseteq X$ of codimension $>c$ such that $j_*(\un_U)|_{(X\setminus T)}$ lies in $\mc{T}_0(X\setminus T)$. The case where $c = 0$ is clear: we can choose $X\setminus T= U$ and use (a). If $c > 0$, choose a closed subscheme $T$ of $X$ of codimension $> c-1$, such that $j_*(\un_U)|_{(X\setminus T)}$ lies in $\mc{T}_0(X\setminus T)$. Using Mayer-Vietoris, (a) and (b), it is sufficient to find a dense open subscheme $V$ of $X$, which contains all the generic points of $T$, and such that $j_*(\un_U)|_{V}$ lies in $\mc{T}_0(V)$. In particular, we can always replace $X$ by a generic neighborhood of $T$ so that $T$ is purely of codimension $c$. Using the localization triangle \ref{AM.localization} and (b), it is then sufficient to prove that $u_*u^* j_*(\un_U)$ belongs to $\mc{T}_0(X)$ with $u\colon T\to X$ the inclusion.

	As in \cite{em}, we introduce the following notation: given a morphism $i \colon Z \to W$ of $X$-schemes, consider commutative diagram:
\[\begin{tikzcd}
	Z & W & {W_U} \\
	& X & U
	\arrow["f", from=1-1, to=1-2]
	\arrow["\pi"', from=1-1, to=2-2]
	\arrow[from=1-2, to=2-2]
	\arrow["{j_W}"', from=1-3, to=1-2]
	\arrow["\lrcorner"{anchor=center, pos=0.125, rotate=-90}, draw=none, from=1-3, to=2-2]
	\arrow[from=1-3, to=2-3]
	\arrow["j"', hook', from=2-3, to=2-2]
\end{tikzcd}\]
where the right hand square is Cartesian, and define an étale motive \[\varphi(W,Z)=\pi_* f^*j_*(\un_{W_U}).\]
Note that this defines a contravariant functor from the category of morphisms of $X$-schemes to $\DM_{\et}(X,R)$. The goal is to prove that $\varphi(X,T)$ belongs to $\mc{T}_0(X)$.

Now, \cite[6.2.8]{em} yields the following data:
\begin{enumerate}[label=(\roman*)]
	\item a finite h-cover $\{f_i\colon  Y_i\to X\}_{i\in I}$ such that for all $i$ in $I$, $f_i$ is of finite type, the scheme $Y_i$ is regular, and $f_i^{-1}(U)$ is either $Y_i$ itself or the complement of a strict normal crossing divisor in $Y_i$ ; 
	\item letting $f \colon Y = \bigsqcup_{i \in I}Y_i \to X$ be the induced global h-cover, a commutative diagram
\[\begin{tikzcd}
	{X'''} && Y \\
	{X''} & {X'} & X
	\arrow["g", from=1-1, to=1-3]
	\arrow["q"', from=1-1, to=2-1]
	\arrow["f", from=1-3, to=2-3]
	\arrow["u", from=2-1, to=2-2]
	\arrow["p", from=2-2, to=2-3]
\end{tikzcd}\]
	in which: $p$ is a proper birational morphism, $u$ is a Nisnevich cover, and $q$ is a flat finite surjective morphism of degree not in $\pp$.
\end{enumerate}
Let $j' \colon  U' \to X '$ denote the pullback
of $j$ along $p$. Then, we can find, by induction on $c$, a closed subscheme $T'$
in $X'$, of codimension $>c-1$, such that $j'_*(\un_{U'})_{X'\setminus T'}$ is in
$\mc{T}_0(X')$. Replacing $X$ further by a generic neighborhood of $T$, we can assume by \cite[6.2.8]{em} that 
\begin{enumerate}[label=(\roman*)]
\setcounter{enumi}{2}
\item $p(T') \subseteq T$ and the induced map $T'\to T$ is finite and sends any generic point to a generic point;
\item if we write $T'' = u^{-1}(T')$, the induced map $T'' \to T'$ is an isomorphism.
\end{enumerate}
Then, \cite[6.2.11]{em} ensures (using (iv)) that $\varphi(X',T')\to \varphi(X'',T'')$ is an isomorphism and \cite[6.2.12]{em} ensures that $\varphi(X'',T'')\to \varphi(X''',T''')$ admits a $\pp$-quasi-section. 

The same proof as \cite[6.2.10]{em} also ensures that the cone of the map $\varphi(X,T)\to \varphi(X',T')$ lies in $\mc{T}_0(X)$; we explain the key points for completeness. As the map $\varphi(X,T)\to \varphi(X',T')$ factors as 
\[\varphi(X,T)\to \varphi(X',p^{-1}(T))\to \varphi(X',T')\] and it suffices to prove that the cone of both above maps lie in $\mc{T}_0(X)$. 
For the first one, let $V$ be a dense open subscheme of $U$ such that $p^{-1}(V ) \to V$ is an isomorphism; write $k \colon Z \to U$ for the complement closed immersion. Letting $\overline{Z}$ be the reduced closure of $Z$ in $X$, cdh-descent yields a cartesian square:
\[\begin{tikzcd}
	{\varphi(X,T)} & {\varphi(X',p^{-1}(T))} \\
	{\varphi(\overline{Z},\overline{Z}\cap T)} & {\varphi(p^{-1}(\overline{Z}),p^{-1}(\overline{Z}\cap T))}
	\arrow[from=1-1, to=1-2]
	\arrow[from=1-1, to=2-1]
	\arrow[from=1-2, to=2-2]
	\arrow[from=2-1, to=2-2]
\end{tikzcd}\]
and the noetherian induction hypothesis on $X$ proves that both bottom objects lie in $\mc{T}_0(X)$. 
For the second map, given (iii) above, using the localization triangle \ref{AM.localization} gives an exact triangle:
\[p_*[j'_*(\un_{U'})|_{p^{-1}(T)\setminus T'}] \to \varphi(X',p^{-1}(T))\to \varphi(X',T')\]
and $p_*[j'_*(\un_{U'})|_{p^{-1}(T)\setminus T'}]$ belongs to $\mc{T}_0(X)$ because $j'_*(\un_{U'})_{X'\setminus T'}$ is in
$\mc{T}_0(X')$ is in $\mc{T}_0(X')$ and we can use (b).

Thus, we have proven so far that the image of the map $\alpha\colon \varphi(X,T)\to \varphi(X''',T''')$ through the canonical functor
\[\Xi\colon \DM_{\et}(X,R)\to \left(\DM_{\et}(X,R)/\DM_{\et,c}^{\coh}(X,R)\right)_{\pp}\] 
is a split monomorphism. Using \cite[B.1.7]{em}, what we want to prove is that $\Xi(\varphi(X,T))$ vanishes. But the map $\alpha$ factors as 
\[\varphi(X,T)\to\varphi(Y,T''') \to\varphi(X''',T''')\] so it suffices to prove that $\Xi(\varphi(Y,T'''))$ vanishes, \textit{i.e.} that $\varphi(Y,T''')$ is in $\mc{T}_0(X)$ but this is true because the map $T'''\to X$ is finite (using (iii), (iv) and the definition of $q$) and writing the definition of $\varphi(Y,T''')$, the result follows from (b), (c) and (i).
\end{proof}

\begin{corollary}
	Let $X$ be a quasi-excellent scheme and let $\pp$ be a prime ideal of $\Z$. Assume that, for any point $x$ of $X$, the exponent characteristic of the residue field $k(x)$ is not in $\pp$. Then, for any dense open immersion $j \colon U \to X$, the motive $j_*(\un_U)$ is $\pp$-constructible.
\end{corollary}

\begin{theorem}
	Let $f \colon Y \to X$ be a morphism of finite type such that X is a quasi-excellent scheme (of finite dimension). Then for any constructible cohomological étale motive
$M$ in $\DM_{\et}(Y,R)$, the motive $f_*(M)$ is constructible in $\DM_{\et}(X,R)$.
\end{theorem}
\begin{proof}
Using Nagata's compactification theorem, it is classical to reduce $f=j$ with $j\colon U \to X$ a dense open immersion and $M=\un_U$ (see the beginning of the proof of \cite[6.2.13]{em}). The rest of the proof is also contained in \cite[6.2.13]{em}: by \Cref{p-locality}, we have to show that for any prime ideal $\pp$ of $\Z$, the motive $j_*(\un_U)$ is $\pp$-constructible cohomological. If $\pp=(0)$, this is given by \Cref{Gabber_lemma}. Otherwise $\pp=(p)$ with $p$ a prime number and we reduce to schemes of characteristic $p$ or schemes with $p$ invertible by using the localization exact triangle \ref{AM.localization} for the closed immersion $X\times_{\Spec(\Z)}\Spec(\Z/p\Z)\to X$. If $p$ is invertible, we use \Cref{Gabber_lemma}; if $p=0$ on $X$, we use \Cref{char_p_case}.
\end{proof}

\begin{corollary}\label{main_thm_appendix}
	The six operations preserve constructible cohomological motives in $\DM_{\et}(-, R)$ over
quasi-excellent (noetherian) schemes (of finite dimension).
\end{corollary}
\bibliographystyle{alpha}
\bibliography{biblio.bib}
\end{document}